\documentclass[11pt]{article}

\usepackage[utf8]{inputenc}
\usepackage[english]{babel}
\usepackage{hyperref}

\usepackage{amssymb,amsmath,amsthm}
\usepackage{enumerate}
\usepackage{enumitem}
\usepackage{algorithm}
\usepackage{algorithmic}
\usepackage{array}
\usepackage{graphicx, color}
\usepackage{epsfig}
\usepackage{tikz}
\usetikzlibrary{calc,decorations.pathmorphing,decorations.text}
\usepackage{subcaption}
\usepackage{refcount} 
\usepackage[hmargin=2.5cm,vmargin=3cm]{geometry}
\usepackage{mathtools, braket}
\usepackage[normalem]{ulem}
\usepackage{authblk}
\usepackage{centernot}
\usepackage{indentfirst}
\usepackage{framed} 
\usepackage{amsmath}
\allowdisplaybreaks[4]
\usepackage[nameinlink]{cleveref}

\usepackage{hyperref}
\hypersetup{
    hidelinks,
	colorlinks=true, 
	linkcolor=blue,
	citecolor=red}

\theoremstyle{plain}
\newtheorem{thm}{Thm}[section]

\newtheorem{theorem}[thm]{Theorem}
\newtheorem{lemma}[thm]{Lemma}
\newtheorem{corollary}[thm]{Corollary}
\newtheorem{proposition}[thm]{Proposition}

\newtheorem{conjecture}[thm]{Conjecture}
\newtheorem{problem}[thm]{Problem}

\newtheorem{observation}[thm]{Observation}
\newtheorem{definition}[thm]{Definition}

\usepackage{bm}

\usepackage[nameinlink]{cleveref}

\newtheorem{innercustomgeneric}{\customgenericname}
\providecommand{\customgenericname}{}
\newcommand{\newcustomtheorem}[2]{%
	\newenvironment{#1}[1]
	{%
		\renewcommand\customgenericname{#2}%
		\renewcommand\theinnercustomgeneric{##1}%
		\innercustomgeneric
	}
	{\endinnercustomgeneric}
}

\newcustomtheorem{customtheorem}{Theorem}
\newcustomtheorem{customlemma}{Lemma}

\newenvironment{proof*}{\noindent \emph{Proof.}}{\hfill$\Diamond$}

\renewcommand{\pod}[1]{\allowbreak\mathchoice
	{\if@display \mkern 0mu\else \mkern 0mu\fi (#1)}
	{\if@display \mkern 0mu\else \mkern 0mu\fi (#1)}
	{\mkern 1mu(\mathrm{mod}\mkern 4mu #1)}
	{\mkern 0mu(#1)}
}

\usepackage[color=green!30]{todonotes}

\usepackage{caption}
\usetikzlibrary{matrix}
\usetikzlibrary{decorations.markings}
\tikzstyle{vertex}=[circle, draw, fill=black!50,
inner sep=0pt, minimum width=4pt]
\tikzset{->-/.style={decoration={
			markings,
			mark=at position .5 with {\arrow{>}}},postaction={decorate}}}

\tikzstyle{bigblue}=[color=blue, very thick, >=stealth]
\tikzstyle{lightblue}=[color=blue, thin, >=stealth]

\tikzstyle{bigred}=[color=red, very thick, >=stealth]
\tikzstyle{lightred}=[color=red, thin, >=stealth]

\tikzstyle{biggreen}=[color=black!30!green, very thick, >=stealth]
\tikzstyle{lightgreen}=[color=black!30!green,  thin, >=stealth]

\def\Z{\mathbb{Z}}

\def\R{\mathbb{R}}

\usepackage{amsmath, amssymb, array, booktabs, caption}

\newcolumntype{M}[1]{>{\centering\arraybackslash}m{#1}}

\title{Reduction Operations and Characterizations of $S^1$-Flows in Graphs}
\author{Chenxing Li$^1$, Jiaao Li$^1$, Rong Luo$^2$, and Bo Su$^1$\\
\small $^1$School of Mathematical Sciences and LPMC, Nankai University, Tianjin 300071, China \\
\small $^2$Department of Mathematics, West Virginia University, Morgantown 26505, USA\\
\small Emails: chenxingli@mail.nankai.edu.cn; lijiaao@nankai.edu.cn; rluo@mail.wvu.edu; suboll@mail.nankai.edu.cn
}
\date{}

\begin{document}
\maketitle

\mbox{}
\vspace{-1.5cm}
\begin{abstract}
Thomassen (J. Combin. Theory Ser. B 108 (2014), 81–91) showed that  every graph admitting a nowhere-zero $3$-flow also admits an
$S^1$-flow. He also proved that the converse holds for cubic graphs, but constructed counterexamples showing it fails in general.  Wang et al. (SIAM J. Discrete Math. 29 (2015), 2166–2178) presented a couple of sufficient conditions under which the existence of an $S^1$-flow guarantees the existence of a nowhere-zero 3-flow. In this paper,  we first prove that a graph with maximum degree at most four admits a nowhere-zero $3$-flow if and only if it admits an $S^1$-flow.
We
then develop some reduction techniques for $S^1$-flows based on graph operations
including bull-growth, $2$-sums, and  contractions.  Finally, we
apply those techniques to characterize  triangularly connected graphs and graphs
containing a spanning triangle-tree that admit $S^1$-flows, respectively.

    \medskip
\noindent\textbf{Keywords.} $S^1$-flow; nowhere-zero $3$-flow; reduction method; triangularly connected graph; spanning triangle-tree.
\end{abstract}

\section{Introduction}

All graphs considered in this paper are finite and loopless. Parallel edges are allowed unless stated otherwise. Contractions
delete the resulting loops but retain parallel edges.

Tutte~\cite{Tutte1949,Tutte54} introduced the theory of integer flows as a
dual counterpart to vertex colorings. This theory was later extended to
real-valued flows by Goddyn, Tarsi, and Zhang~\cite{Goddyn}, and subsequently
generalized to vector flows by Jain~\cite{Jain2007} and
Thomassen~\cite{thomassen2014group}.

For a graph $G$, let $V(G)$ and $E(G)$ denote its vertex set and edge set, respectively. We write $N_G(v)$ for the set of neighbors of $v$ in $G$, and $d_G(v)$ for the degree of $v$ in $G$. A vertex of degree $k$ (or at least $k$, or at most $k$) is called a $k$-vertex (or a $k^+$-vertex, or a $k^-$-vertex, respectively). Let $V_k(G)$ denote the set of $k$-vertices in $G$. Given an orientation $D$ of $G$, let $E_D^+(v)$ and $E_D^-(v)$ denote the sets of edges directed out of and into a vertex $v$, respectively.

\begin{definition}
Let $\Omega\subseteq\mathbb{R}^d$, and let $G$ be a graph with an
orientation $D$. For a vector assignment $f:E(G)\to\mathbb{R}^d$, define
the \emph{boundary} of $f$ at a vertex $v$ by
\[
\partial_D f(v):=
\sum_{e\in E_D^+(v)}f(e)
-
\sum_{e\in E_D^-(v)}f(e).
\]
An ordered pair $(D,f)$ is called a \emph{vector $\Omega$-flow} of $G$
if $f:E(G)\to\Omega$ and
\[
\partial_D f(v)=\bm{0}
\]
for every vertex $v\in V(G)$.
\end{definition}

When the context is clear, we may omit the subscript in some notation and refer to a vector $\Omega$-flow simply as an \emph{$\Omega$-flow}. If $\Omega=\{\pm1,\dots,\pm(k-1)\}\subseteq\mathbb{R}$ for some integer $k$, then an $\Omega$-flow is the classical \emph{nowhere-zero $k$-flow}.

Vector flows have attracted considerable attention in recent years. Topics
that have been studied include geometric characterizations, $p$-normed
generalizations, and progress on Jain's conjectures;
see~\cite{GMRR2025,HMM2026,LLLS2026,mattiolo2023d,mattiolo2024lower,
mattiolo2025geometric,Ulyanov2026,wang2015vector} and the references therein.

This paper focuses on $S^1$-flows, where
\[
S^1=\{\boldsymbol{x}\in\mathbb{R}^2:\|\boldsymbol{x}\|=1\}
\]
is the unit circle in $\mathbb{R}^2$, and $\|\boldsymbol{x}\|$ denotes the
Euclidean norm of $\boldsymbol{x}$.

Thomassen~\cite{thomassen2014group} established the following fundamental
relationship between nowhere-zero $3$-flows and $S^1$-flows.

\begin{theorem}[\cite{thomassen2014group}]
\label{thm:3-R3-S1-flow}
Let $G$ be a graph, and let $R_3$ denote the set of cube roots of unity. Then
~\textup{(a)} and~\textup{(b)} are equivalent, and each
implies statement~\textup{(c)}.
\begin{enumerate}[label=(\alph*)]
    \item $G$ admits a nowhere-zero $3$-flow;
    \item $G$ admits an $R_3$-flow;
    \item $G$ admits an $S^1$-flow.
\end{enumerate}
Moreover, if $G$ is cubic, then the three statements are equivalent.
\end{theorem}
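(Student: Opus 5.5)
We plan to prove (a)$\Leftrightarrow$(b) through an explicit dictionary between $\mathbb{Z}_3$-flows and $R_3$-flows, obtain (c) from (b) trivially, and prove the cubic converse by a local analysis at each vertex. By Tutte's classical theorem, $G$ admits a nowhere-zero $3$-flow if and only if it admits a nowhere-zero $\mathbb{Z}_3$-flow. We will therefore work with $\mathbb{Z}_3$-flows throughout.

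Write $R_3=\{\omega^0,\omega^1,\omega^2\}$ with $\omega=e^{2\pi i/3}$, viewed in $\mathbb{R}^2$. The key fact is that for a multiset of elements of $R_3$ in which $\omega^j$ appears $n_j$ times, the sum $n_0+n_1\omega+n_2\omega^2$ vanishes if and only if $n_0=n_1=n_2$. This holds because $1,\omega$ are linearly independent over $\mathbb{R}$ and $1+\omega+\omega^2=0$.

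For (a)$\Rightarrow$(b), take a nowhere-zero $\mathbb{Z}_3$-flow and reverse edges so that every edge carries value $1$. This gives an orientation $D$ in which every vertex has $d^+(v)\equiv d^-(v)\pmod 3$. We then need to assign $R_3$-values so that each vertex is balanced. Reversing an edge negates its vector, which enlarges the alphabet to the six sixth roots of unity. Our plan is therefore to reformulate $R_3$-flows as follows: an orientation together with a labelling of edges by $R_3$ such that at each vertex the out-labels minus the in-labels sum to zero. Equivalently, since $-\omega^j$ equals $\omega^{j}$ rotated by $\pi$, the condition is that the combined multiset of out-labels and negated in-labels is balanced.

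The cleaner route we would take is Thomassen's: $\mathbb{Z}_3$-flows correspond to $\mathbb{Z}_3$-tensions dually, but this is not available for nonplanar graphs. So instead we use the direct bijection $f\mapsto \omega^{f}$ applied to a \emph{mod-$3$ orientation}. Given $D$ with $d^+\equiv d^-\pmod 3$, assigning every edge the vector $1$ gives boundary $d^+(v)-d^-(v)\in 3\mathbb{Z}$, which is not zero. We must therefore instead decompose. The plan is to split each vertex's incident edges into triples and pairs: pairs consisting of one in-edge and one out-edge carrying equal labels, and triples of same-direction edges labelled $1,\omega,\omega^2$. Finding a consistent global labelling is the main obstacle. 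We would attempt it by an Euler-tour style argument on an auxiliary graph, and this is where most of the work lies.

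For (b)$\Rightarrow$(a), given an $R_3$-flow, we project onto any direction $u$ such that the values $\langle \omega^j,u\rangle$ are nonzero, and then scale and round. A cleaner approach is to use the $\mathbb{Z}$-module map $\mathbb{Z}[\omega]\to\mathbb{Z}_3$ sending $\omega\mapsto 1$, which is well defined since $1+\omega+\omega^2\mapsto 3=0$. Each edge value $\pm\omega^j$ maps to $\pm1\neq0$ in $\mathbb{Z}_3$, and the boundary condition is preserved because the map is a ring homomorphism. This gives a nowhere-zero $\mathbb{Z}_3$-flow, and hence (a). The implication (b)$\Rightarrow$(c) is immediate since $R_3\subseteq S^1$.

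For the cubic case, it remains to show (c)$\Rightarrow$(b). At a vertex of degree $3$, three unit vectors with signs summing to zero must form an equilateral configuration; that is, the signed vectors are $x,x\omega,x\omega^2$ for some $x\in S^1$. We will then show that a cubic graph with an $S^1$-flow is bipartite-like: after orienting so that all signed values at each vertex are consistent, the vertex classes with "outgoing" versus "incoming" configurations force the flow to be a $\mathbb{Z}_3$-type flow. More concretely, we reduce to a flow of the form $(D',\,x_v\cdot\omega^{g})$ and show that a global rotation plus a re-orientation yields an $R_3$-flow. We would do this by propagating rotation parameters along a spanning tree. We expect this propagation and its consistency on cycles to be the main difficulty in the converse.
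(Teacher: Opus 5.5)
There is a genuine gap. The paper does not prove this theorem (it is quoted from Thomassen), so I am judging your plan on its own.

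\textbf{What works.} Your (b)$\Rightarrow$(a) is correct and clean. $\mathbb{Z}[\omega]$ is free abelian on $1,\omega$, so $1,\omega\mapsto 1$ defines a homomorphism onto $\mathbb{Z}_3$. It sends every element of $R_3$ to $1$ (note $\omega^2=-1-\omega\mapsto 1$) and preserves boundaries. (b)$\Rightarrow$(c) is trivial.

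\textbf{The gap in (a)$\Rightarrow$(b).} You stop exactly where an idea is needed. Decomposing each $\delta(v)$ into in/out pairs and same-direction triples does not by itself produce labels, and ``an Euler-tour style argument on an auxiliary graph'' is not an argument. Your plan can be completed as follows. Chaining edges through the pairs splits $E(G)$ into closed directed trails and open directed trails, each open trail running from an out-triple to an in-triple. The open trails are the edges of a bipartite cubic multigraph on the set of triples. A proper $3$-edge-colouring of it (König) with colours $1,\omega,\omega^2$ is the required labelling, and closed trails get any label.

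A shorter route is exactly the flow version of the construction in Lemma~\ref{lem:sqrt3-preflow}. Take an integer flow $\varphi$ with values in $\{\pm1,\pm2\}$. The edges with odd value form an even subgraph, which carries a $\{0,\pm1\}$-flow $\psi$ supported exactly on it. Then $F=\tfrac12\varphi\,(1,0)+\tfrac{\sqrt3}{2}\psi\,(0,1)$ is a flow whose values are sixth roots of unity, i.e.\ lie in $R_3\cup(-R_3)$. Reversing the edges with values in $-R_3$ gives an $R_3$-flow.

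\textbf{The gap in the cubic converse.} The ``bipartite-like'' reduction is asserted rather than proved, and the obstacle you expect (consistency of rotation parameters around cycles) is not there. The signed values at a $3$-vertex $v$ are $x,x\omega,x\omega^2$. So the set $\{\pm f(e):e\in\delta(v)\}$ is the regular hexagon $\{f(e_0)\zeta:\zeta^6=1\}$ for \emph{every} $e_0\in\delta(v)$. Two adjacent vertices share an edge and hence the same hexagon, so on each component all flow values lie in a single hexagon. Rotate each component so that its hexagon is $R_3\cup(-R_3)$, then reverse the edges with values in $-R_3$; this yields an $R_3$-flow. This is the same mechanism the paper uses in the odd-crystal argument of Section~5: Observation~\ref{ob:pi/3}, and the step converting an all-$\Omega$-valued flow into an $R_3$-flow.

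As written, your proposal establishes only (b)$\Rightarrow$(a) and (b)$\Rightarrow$(c).
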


Note that Theorem~\ref{thm:3-R3-S1-flow} immediately gives that a graph $G$ with maximum degree at most $3$  admits an $S^1$-flow if and only if $G$ admits a nowhere-zero $3$-flow.
Thomassen also constructed a family of graphs that admit $S^1$-flows but no
nowhere-zero $3$-flows. This naturally leads to the problem of determining
when the existence of an $S^1$-flow guarantees the existence of a nowhere-zero
$3$-flow. Motivated by this gap, Wang et al.~\cite{wang2015vector} posed the
following problem.

\begin{problem}[\cite{wang2015vector}]
Characterize conditions under which the existence of an $S^1$-flow guarantees
the existence of a nowhere-zero $3$-flow. The conditions may be considered
from the following two perspectives:
\begin{enumerate}[label=(\arabic*)]
    \item properties of the $S^1$-flow itself;
    \item structural properties of the underlying graph.
\end{enumerate}
\end{problem}

Wang et al.~\cite{wang2015vector} obtained the following sufficient
conditions.

\begin{theorem}[\cite{wang2015vector}]
\label{thm:wang}
Let $G$ be a graph.
\begin{enumerate}[label=(\arabic*)]
    \item \label{thm:wang-vector-S1}
    If $G$ admits an $S^1$-flow of rank at most two, then it admits a
    nowhere-zero $3$-flow.

    \item \label{thm:wang-degree3}
    If $G[V_3(G)]$ is connected and $G-V_3(G)$ is acyclic, then $G$ admits an
    $S^1$-flow if and only if it admits a nowhere-zero $3$-flow.
\end{enumerate}
\end{theorem}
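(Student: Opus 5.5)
We prove the two parts separately. In both, the goal is to produce a nowhere-zero $\mathbb{Z}_3$-flow, equivalently an $R_3$-flow, and then invoke Theorem~\ref{thm:3-R3-S1-flow}. That theorem also gives the easy directions: a nowhere-zero $3$-flow yields an $R_3$-flow, hence an $S^1$-flow.

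\textbf{Part~\ref{thm:wang-degree3}.} Let $(D,f)$ be an $S^1$-flow of $G$.
\begin{itemize}
\item At a $3$-vertex $v$, the three signed unit vectors $\pm f(e)$ sum to $\bm 0$. So they form an equilateral configuration: $\{\pm f(e): e\ni v\}$ is a rotated copy $\rho_v R_3$ of the cube roots of unity, up to the sign determined by orientation. Hence every edge at $v$ takes a value in $\pm\rho_v R_3$.
\item If $uv$ is an edge with $u,v\in V_3(G)$, the common value $f(uv)$ lies in both $\pm\rho_u R_3$ and $\pm\rho_v R_3$. The set $\pm\rho R_3$ is six points at angular spacing $60^\circ$, so it is determined by any one of its points. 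Thus $\pm\rho_u R_3=\pm\rho_v R_3$.
\item Since $G[V_3(G)]$ is connected, a single rotation $\rho$ works for all $3$-vertices. After applying $\rho^{-1}$ globally (which preserves being an $S^1$-flow), every edge incident to $V_3(G)$ takes a value in $\pm R_3$.
\end{itemize}

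The remaining edges form $F=G-V_3(G)$, which is a forest. Let $\Lambda=\mathbb{Z}[\omega]$ be the Eisenstein lattice, where $\omega$ is a primitive cube root of unity; note $\Lambda$ is the additive group generated by $R_3$.
\begin{itemize}
\item At each vertex $x$, the boundary contributed by edges not in $F$ lies in $\Lambda$. By the flow condition, the boundary of $f|_F$ at $x$ therefore lies in $\Lambda$.
\item In a forest, the values of an edge function are determined by its boundary by peeling leaves. Each edge value is a signed sum of boundaries over the vertices of one component of $F-e$, so it lies in $\Lambda$.
\item The only unit vectors of $\Lambda$ are the six elements of $\pm R_3$. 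Hence every edge of $G$ carries a value in $\pm R_3$.
\item Reversing each edge whose value lies in $-R_3$ (and negating that value) gives an $R_3$-flow. Theorem~\ref{thm:3-R3-S1-flow} then yields a nowhere-zero $3$-flow.
\end{itemize}
This part is straightforward; the one fact to check is the peeling claim that edge values lie in $\Lambda$.

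\textbf{Part~\ref{thm:wang-vector-S1}.} Let $L$ be the subgroup of $\mathbb{R}^2$ generated by $f(E(G))$, of rank at most two.
\begin{itemize}
\item \emph{Values on a line.} If all values lie on one line, they are $\pm u$ for a fixed unit vector $u$. Reorienting gives a nowhere-zero $2$-flow, hence a nowhere-zero $3$-flow.
\item \emph{$L$ a lattice.} Otherwise $L$ spans $\mathbb{R}^2$ and, having rank two, is a lattice $\mathbb{Z}a\oplus\mathbb{Z}b$. Writing $f(e)=m_ea+n_eb$ turns $f$ into a $\mathbb{Z}^2$-flow. Any homomorphism $\varphi:L\to\mathbb{Z}_3$ that is nonzero on every value $f(e)$ then gives a nowhere-zero $\mathbb{Z}_3$-flow $\varphi\circ f$.
\end{itemize}
So the plan is to analyze the finite set $U=L\cap S^1$, where all values live, and find such a $\varphi$.

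The main obstacle is finding $\varphi$. The homomorphisms $L\to\mathbb{Z}_3$ form $L/3L$, which has four nontrivial kernels of index $3$. We need one kernel avoiding every value of $f$ (up to sign). I would try to argue that the values used by a genuine flow meet at most three of the four index-$3$ sublattices containing $3L$:
\begin{itemize}
\item First, choose a basis $a,b$ of $L$ consisting of flow values.
\item Then compare the Gram data of $a,b$ with each $u\in U$. This should show that $U$ is either $\{\pm a,\pm b\}$ (square-type) or contained in an Eisenstein-type set.
\item The Eisenstein case is handled exactly as in Part~\ref{thm:wang-degree3}.
\item In the square-type case, each of the two classes of edges is an even subgraph. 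They partition $E(G)$, so $G$ itself is even and has a nowhere-zero $2$-flow.
\end{itemize}
The classification of the unit vectors of $L$ is the step I expect to be hardest. If it fails in general, the fallback is to work with the integer flow $(m,n)$ and apply a Tutte-type rounding argument to extract a nowhere-zero $\mathbb{Z}_3$-flow directly.
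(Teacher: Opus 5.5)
Your Part~(2) is correct. The hexagon $\pm\rho R_3$ propagates along the connected graph $G[V_3(G)]$. Peeling the forest $G-V_3(G)$ then forces every remaining value into the Eisenstein lattice, whose only unit vectors are $\pm R_3$. The paper only quotes this theorem from Wang et al.~\cite{wang2015vector}, so there is no in-paper argument to compare with.

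Part~(1), however, has a genuine gap, exactly at the step you flagged. Your proposed dichotomy is false. Worse, the strategy of finding $\varphi\colon L\to\mathbb{Z}_3$ nonzero on every flow value cannot work in general, because a flow value may lie in $3L$. Here is an example:
\begin{itemize}
\item Take $u=(-\tfrac13,\tfrac{2\sqrt2}{3})$ and $\bar u=(-\tfrac13,-\tfrac{2\sqrt2}{3})$.
\item Join two vertices by eight parallel edges, all directed the same way: three carry $u$, three carry $\bar u$, two carry $(1,0)$.
\item Since $u+\bar u=(-\tfrac23,0)$, this is an $S^1$-flow. Identifying $\mathbb{R}^2$ with $\mathbb{C}$, its values lie in $\mathbb{Q}(\sqrt{-2})$, so $L$ has rank two.
\item But $u+\bar u+(1,0)=(\tfrac13,0)\in L$. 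Hence $(1,0)\in 3L$, and every homomorphism $L\to\mathbb{Z}_3$ vanishes on the two edges valued $(1,0)$.
\end{itemize}
The same example also breaks your intermediate claims. No two flow values form a basis of $L$: the three pairs generate subgroups of index $3$, $2$, $3$. And $L\cap S^1\supseteq\{\pm(1,0),\pm u,\pm\bar u\}$ is neither $\{\pm a,\pm b\}$ nor hexagonal.

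The classification fails even when a basis of flow values exists. For unit vectors $a,b$ with $a\cdot b=-\tfrac78$, the vector $2(a+b)$ is again a unit vector. Five parallel edges carrying $a,a,b,b$ one way and $2(a+b)$ the other form an $S^1$-flow. Both graphs have $3$-flows, of course; the point is that your method cannot produce them.

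Your fallback does not close the gap either. Rounding needs a real flow with $1\le|h(e)|\le 2$. A projection $h=c\langle f,w\rangle$ gives one exactly when some $w$ satisfies $\min_e|\langle f(e),w\rangle|\ge\tfrac12\max_e|\langle f(e),w\rangle|$.
\begin{itemize}
\item This holds automatically if the values use at most three directions up to sign, since then some gap on the projective circle is at least $\pi/3$.
\item It fails for flows using $\pm u^k$ with $|k|\le 2$ (complex powers). These occur on a bundle of parallel edges via $3u^2+2u+3=0$ and its conjugate. Their directions leave no gap as large as $40^\circ$, so every projection has ratio below $\tfrac12$.
\end{itemize}

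Your framework does yield part of the result. Once three directions occur, the Gram form of $L$ is rational. After a rotation, $L\subset\mathbb{Q}(\sqrt{-d})$ and all values have norm one.
\begin{itemize}
\item If $3$ ramifies or is inert in $\mathbb{Q}(\sqrt{-d})$, reducing modulo a prime above $3$ gives the desired $\varphi$. Norm-one elements are $3$-adic units, and their residues lie on at most two $\mathbb{F}_3$-lines.
\item If $3$ splits, as for $d=2$, the values can have three or more distinct valuations at a prime above $3$. Then every $\varphi$ kills the values of middle valuation, exactly as in the example above.
\end{itemize}
Handling the split case requires an idea your proposal does not contain, so Part~(1) remains unproved.
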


The rank of an $S^1$-flow appearing in Theorem~\ref{thm:wang} is defined
in~\cite{wang2015vector}. Since it is not used elsewhere in this paper, we
omit the definition.

Motivated by these results, we investigate the existence of $S^1$-flows in
graphs with particular structural properties. 
 Our first result establishes
the equivalence between $S^1$-flows and nowhere-zero $3$-flows for all graphs with 
 maximum degree at most~$4$.

\begin{theorem}
\label{thm:degree4}
Let $G$ be a graph with maximum degree at most $4$. Then $G$ admits a
nowhere-zero $3$-flow if and only if it admits an $S^1$-flow.
\end{theorem}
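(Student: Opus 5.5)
The forward direction (a nowhere-zero $3$-flow yields an $S^1$-flow) is exactly the implication (a)$\Rightarrow$(c) of Theorem~\ref{thm:3-R3-S1-flow}. So only the converse needs proof. The plan is to destroy all $4$-vertices by a splitting operation that preserves the $S^1$-flow. This reduces the problem to maximum degree at most $3$, where the remark after Theorem~\ref{thm:3-R3-S1-flow} applies. The resulting integer flow is then lifted back to $G$.

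\emph{Step 1 (local structure at a $4$-vertex).} Let $(D,f)$ be an $S^1$-flow of $G$ and let $v$ be a $4$-vertex. Replace $f(e)$ by $-f(e)$ for the edges of $E_D^-(v)$. The conservation condition becomes $a+b+c+d=\bm 0$ for unit vectors $a,b,c,d$. I claim these four vectors split into two antipodal pairs. Put $w=a+b=-(c+d)$.
\begin{itemize}
\item If $w=\bm 0$, then $a=-b$ and $c=-d$.
\item If $w\neq\bm 0$, then $\{a,b\}$ and $\{-c,-d\}$ are both pairs of unit vectors with sum $w$. Such a pair is unique: the two vectors are symmetric about the direction of $w$, and the angle between them is determined by $\|w\|$. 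Hence $\{a,b\}=\{-c,-d\}$, which means $a=-c,\ b=-d$ or $a=-d,\ b=-c$.
\end{itemize}
In every case the four edge-ends at $v$ split into two pairs, each with zero signed sum.

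\emph{Step 2 (splitting).} Replace $v$ by two new vertices $v_1,v_2$. Attach the two edge-ends of one pair to $v_1$ and those of the other pair to $v_2$. The same $(D,f)$ is then still an $S^1$-flow, since each new vertex has zero boundary. Repeating this at every $4$-vertex gives a graph $G'$ of maximum degree at most $3$ carrying an $S^1$-flow.

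One point needs care: an edge $uv$ parallel to another could become a loop only if both its ends land at the same new vertex. This cannot happen, because splitting never identifies vertices. Also, $G'$ has no $1$-vertices, since a single unit vector cannot have zero boundary.

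\emph{Step 3 (applying the cubic case).} In $G'$, suppress $2$-vertices; the flow values along a suppressed path agree up to sign. Components that become cycles trivially admit nowhere-zero $3$-flows. The rest yields cubic graphs, possibly with parallel edges, carrying $S^1$-flows. Suppression could create loops, but a loop's value contributes zero to the boundary and can be deleted and later given any value. By the cubic equivalence in Theorem~\ref{thm:3-R3-S1-flow}, these graphs have nowhere-zero $3$-flows. Subdividing back gives a nowhere-zero $3$-flow of $G'$.

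\emph{Step 4 (lifting back).} Identifying $v_1,v_2$ back into $v$ preserves conservation, because the boundary at $v$ is the sum of the boundaries at $v_1$ and $v_2$. So $G$ admits a nowhere-zero $3$-flow.

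The main obstacle is Step 1, the rigidity of four unit vectors summing to zero. The rest is bookkeeping about loops and degree-$2$ vertices.
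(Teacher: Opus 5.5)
Your proposal is correct and follows the paper's proof: the same antipodal pairing of the four unit vectors at a $4$-vertex, splitting it into two $2$-vertices that inherit the flow, reduction to maximum degree at most $3$, and re-identification of the split vertices. The paper splits one vertex at a time by induction on $|V_4(G)|$ and takes the degree-$\le 3$ case as immediate from Theorem~\ref{thm:3-R3-S1-flow}, which your Step~3 spells out. One small correction there: loops never actually arise, because a loop at a $3$-vertex would force the flow value on the remaining edge at that vertex to be $\bm 0$. Your loop-deletion remark, which would in any case destroy cubicity, is therefore unnecessary.
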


We then develop several reduction techniques for $S^1$-flows; see
Section~\ref{sec:reduction}. These techniques are of independent interest and
may be useful in future work. To the best of our knowledge, they are the first
reduction techniques developed specifically for the study of $S^1$-flows. We
apply them to characterize triangularly connected graphs and  graphs
containing a spanning triangle-tree that admit an $S^1$-flow, respectively.

We first introduce the terminology needed to state these characterizations. In the following definition, the graphs under consideration are simple.

\begin{definition}\label{path}
\begin{enumerate}[label=(\arabic*)]
    \item \label{def:triangle-tree} A \emph{triangle-tree} is a graph obtained from a triangle by
    repeatedly adding a new vertex adjacent to the two endpoints of an
    existing edge. A $2$-vertex in a triangle-tree is called a \emph{leaf}.

    \item \label{def:triangle-path} A \emph{triangle-path} is either a triangle or a triangle-tree with
    exactly two leaves. A triangle-path with leaves $u$ and $v$ is denoted by
    $\mathcal{P}(u,v)$.

    \item \label{def:triangularly-connected} A connected graph $G$ is \emph{triangularly connected} if every pair of edges of $G$, not necessarily distinct, lies in a common triangle-path.

    \item \label{def:crystal} A \emph{crystal} is a graph obtained from a triangle-path $\mathcal{P}(u,v)$  with at least four vertices by adding a new edge connecting its two leaves $u,v$, denoted   
    $\mathcal{C}=\mathcal{P}(u,v)+uv$. A crystal $\mathcal{C}$ is called
    an \emph{odd crystal} if every vertex of $\mathcal{C}$ has odd degree; see
    Figure~\ref{fig:crystal}.
\end{enumerate}
\end{definition}

\begin{figure}[htbp]
\centering

\begin{subfigure}{0.38\textwidth}
\centering
\begin{tikzpicture}[>=latex,
    roundnode/.style={circle, draw=black!90, thick, minimum size=1.5mm, inner sep=0pt},
    scale=0.45
]

    \node [roundnode] (1) at (-3,0) {};
    \node [roundnode] (2) at (-2,1.732) {};
    \node [roundnode] (3) at (0,1.732) {};
    \node [roundnode] (4) at (-1,0) {};
    \node [roundnode] (5) at (1,0) {};
    \node [roundnode] (6) at (0,-1.732) {};
    \node [roundnode] (7) at (2,-1.732) {};
    \node [roundnode] (8) at (3,0) {};

    \draw[line width=0.8pt] (4)--(1)--(2)--(3)--(5)--(8);
    \draw[line width=0.8pt] (2)--(4)--(3);
    \draw[line width=0.8pt] (4)--(5)--(6)--(4);
    \draw[line width=0.8pt] (5)--(7)--(8);
    \draw[line width=0.8pt] (6)--(7);

    \draw[line width=0.8pt]
        (1) .. controls +(-1,4) and +(1,4) .. (8);

\end{tikzpicture}
\caption{An odd crystal}
\end{subfigure}
\hfill
\begin{subfigure}{0.55\textwidth}
\centering
\begin{tikzpicture}[>=latex,
    roundnode/.style={circle, draw=black!90, thick, minimum size=1.5mm, inner sep=0pt},
    roundnode0/.style={circle, draw=black!90, fill=black, thick, minimum size=1.5mm, inner sep=0pt},
    scale=0.45
]

    \node[roundnode] (1) at (-6,0) {};
    \node[roundnode] (2) at (-5,1.732) {};
    \node[roundnode] (3) at (-3,1.732) {};
    \node[roundnode] (4) at (-4,0) {};
    \node[roundnode] (5) at (-2,0) {};
    \node[roundnode] (6) at (-3,-1.732) {};
    \node[roundnode0] (7) at (-1,-1.732) {};
    \node[roundnode] (8) at (0,0) {};

    \node[roundnode] (9) at (1,1.732) {};
    \node[roundnode] (10) at (3,1.732) {};
    \node[roundnode] (11) at (2,0) {};
    \node[roundnode] (12) at (4,0) {};

    \node[roundnode] (13) at (1,-1.732) {};

    \draw[line width=0.8pt]
        (4)--(1)--(2)--(3);
    \draw[line width=0.8pt]
        (2)--(4)--(3);
    \draw[line width=0.8pt]
        (4)--(5)--(3);
    \draw[line width=0.8pt]
        (4)--(5)--(6)--(4);
    \draw[line width=0.8pt]
        (5)--(7)--(6);
    \draw[line width=0.8pt]
        (7)--(8)--(5);
    \draw[line width=0.8pt]
        (7)--(13);
    \draw[line width=0.8pt]
        (11)--(8)--(9)--(10)--(11)--(9);
    \draw[line width=0.8pt]
        (10)--(12)--(11)--(13);
    \draw[line width=0.8pt]
        (8)--(13);

    \draw[line width=0.8pt]
        (1) .. controls +(-2,4) and +(3,4) .. (12);

\end{tikzpicture}
\caption{A crystal that is not an odd crystal (the black vertex has even degree)}
\end{subfigure}

\caption{Examples of crystals}
\label{fig:crystal}
\end{figure}

The class of triangularly connected graphs and the class of graphs containing
a spanning triangle-tree are incomparable; see
Section~\ref{sec:concludingremarks}.
Fan et al.~\cite{Fan2008} and Li et al.~\cite{LLW20} characterized the graphs
admitting nowhere-zero $3$-flows in these two classes, respectively. We extend
both characterizations to $S^1$-flows.

\begin{theorem}\label{thm:s1-characterization}
Let $G$ be a bridgeless graph.
\begin{enumerate}[label=(\arabic*)]
    \item \label{thm:s1-characterization:triangular}
    If $G$ is triangularly connected, then $G$ admits an $S^1$-flow if and
    only if $G$ is not an odd wheel.

    \item \label{thm:s1-characterization:crystal}
    If $G$ contains a spanning triangle-tree, then $G$ admits an $S^1$-flow if
    and only if $G$ is not an odd crystal.
\end{enumerate}
\end{theorem}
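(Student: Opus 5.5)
The plan is to prove Theorem~\ref{thm:s1-characterization} in two directions for each part. In each direction I would combine the known characterizations of nowhere-zero $3$-flows, due to Fan et al.~\cite{Fan2008} for triangularly connected graphs and Li et al.~\cite{LLW20} for graphs with a spanning triangle-tree, with Theorem~\ref{thm:3-R3-S1-flow}. Those results say, roughly, that in each class every bridgeless graph admits a nowhere-zero $3$-flow except for an explicitly described family of exceptions. By Theorem~\ref{thm:3-R3-S1-flow}(a)$\Rightarrow$(c), every non-exceptional graph already admits an $S^1$-flow. The proof therefore splits into two tasks: (i) show that the odd wheels, respectively the odd crystals, admit no $S^1$-flow; (ii) show that every other exceptional graph in the Fan et al.\ / Li et al.\ families does admit an $S^1$-flow.

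For task (i), the odd wheel $W_{2k+1}$ has rim vertices of degree $3$ and a hub of degree $2k+1$. I would orient the rim cyclically. At each rim vertex the three unit vectors must sum to zero, so they make mutual angles of $120^\circ$. Consequently, consecutive rim edges carry vectors differing by a rotation of $\pm 60^\circ$, up to the sign conventions fixed by the orientation, and each spoke vector is determined by the two adjacent rim vectors. Going around an odd cycle, this rigidity forces a parity contradiction: either a rotation of total angle $\not\equiv 0 \pmod{360^\circ}$, or a nonzero sum at the hub.

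The odd crystal is handled the same way. All vertices have odd degree, and the internal vertices of a triangle-path have degree $3$ or higher. I would propagate the $120^\circ$ rigidity along the triangles of the path. At vertices of degree $\ge 5$ this rigidity is not automatic, so I would instead use a reduction from Section~\ref{sec:reduction}. The intended move is to contract a triangle, or to reverse a bull-growth, shrinking the odd crystal to a smaller odd crystal or to $K_4$, where cubic rigidity (Theorem~\ref{thm:3-R3-S1-flow}, cubic case) applies. For this I need the reduction to preserve $S^1$-flow existence in the direction ``big has flow $\Rightarrow$ small has flow.''

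For task (ii), I would take each non-wheel, non-crystal exceptional graph from the known lists. For each one, I would either exhibit an explicit $S^1$-flow, for instance using vectors at angles other than multiples of $120^\circ$ on a cycle through a vertex of degree $\ge 4$, or reduce it via $2$-sums and contractions to a small base graph with an evident $S^1$-flow. Theorem~\ref{thm:degree4} helps here: any base graph of maximum degree at most $4$ can be settled by checking for a nowhere-zero $3$-flow.

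The main obstacle I expect is the odd-crystal nonexistence argument when high-degree odd vertices occur. There the local rigidity fails, and the argument depends entirely on a reduction lemma valid in the ``only if'' direction. My first attempt would be an induction on $|V|$: remove a leaf-adjacent $3$-vertex pair via a bull-growth reversal, and check that an $S^1$-flow restricts to one on the smaller odd crystal.
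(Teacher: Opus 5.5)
Your overall frame is right: reduce to the Fan et al.\ and Li et al.\ characterizations and use Theorem~\ref{thm:3-R3-S1-flow}. Your odd-wheel argument is also correct, and more elementary than the paper's appeal to Theorem~\ref{thm:wang}\ref{thm:wang-degree3}. Orient the rim cyclically and let $r_i$ be the value on $v_iv_{i+1}$. The equation at a rim $3$-vertex gives $\lVert r_i-r_{i-1}\rVert=1$, so $r_i=r_{i-1}R^{\pm1}$ with $R$ the rotation by $\pi/3$, and an odd number of such steps cannot return to $r_0$.

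The odd-crystal half of task (i), however, has a genuine gap. Contraction cannot produce a contradiction, because every contraction of an odd crystal already has a nowhere-zero $3$-flow (Lemma~\ref{lem:crystal-contraction}). Bull-reduction does not transfer $S^1$-flows either. Orient $ua,uw,vb,vw$ away from $\{u,v\}$ and let $\bm x$ be the value on $uv$ from $u$ to $v$. Then $\{\phi(ua),\phi(uw)\}=\{\bm xR^{2},\bm xR^{-2}\}$ and $\{\phi(vb),\phi(vw)\}=\{\bm xR,\bm xR^{-1}\}$. The flow restricts to the bull-reduction only when $\phi(ua)+\phi(vb)=\bm 0$. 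The other choice, with $\lVert\phi(ua)+\phi(vb)\rVert=\sqrt3$, is locally consistent, and it really occurs. For example, $K_4\oplus_e K_4$ is a bull-growth of $K_4$ (case $a=b$) and has an $S^1$-flow by Theorem~\ref{thm:intro-2-sum-general}\ref{thm:intro-2-sum-general:iv}, while $K_4$ has none.

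So ruling out the bad configuration at your leaf bull needs a global argument, and your plan has none. The paper's argument runs as follows. Fix the hexagon $\Omega=\{\pm\phi(e):e\in\delta(s)\}$. By the Eisenstein-integer Lemma~\ref{lem:regular-hexagon-extension}, if all but two values at a vertex lie in $\Omega$ and the remaining two do not cancel, then they lie in $\Omega$ too. Propagate this along the backbone. At the first backbone vertex where propagation stalls, the two remaining values must cancel. Splitting them off gives a $2$-edge-cut and a strictly smaller odd crystal with an $S^1$-flow. If propagation never stalls, all values lie in $\Omega$, which gives an $R_3$-flow and contradicts Theorem~\ref{llw}.

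Task (ii) is missing its key tool. The exceptional families are infinite and recursive, and their members are glued from pieces that have no $S^1$-flow: two odd wheels in $W\oplus_e W'$, or the summands of $\mathcal C\oplus_{aw}K_4$. Theorem~\ref{thm:intro-2-sum-general}\ref{thm:intro-2-sum-general:i} needs both summands to have $S^1$-flows, and ``exhibit a flow or reduce to a base graph'' does not explain how to get past this.

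What is needed is the two-terminal preflow, Lemma~\ref{lem:sqrt3-preflow}. If $G_i-e$ has a nowhere-zero $3$-flow, it carries an $S^1$-preflow with boundary $\pm\sqrt3\,\bm a_i$ at the ends of $e$ and zero elsewhere. Choosing $\bm a_1\cdot\bm a_2=-5/6$ makes $-\sqrt3(\bm a_1+\bm a_2)$ a unit vector, and $e$ can carry it to close the flow. This is Theorem~\ref{thm:intro-2-sum-general}\ref{thm:intro-2-sum-general:iv}.

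A minimal-counterexample induction then settles part (1). In $G=G_1\oplus_e W$, either $G_1$ has an $S^1$-flow, and part \ref{thm:intro-2-sum-general:ii} applies, or by minimality $G_1$ is an odd wheel, and part \ref{thm:intro-2-sum-general:iv} applies.

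Part (2) further needs that bull-growth preserves $S^1$-flows (Theorem~\ref{thm:intro-bull-growth}). This reduces to $G=\mathcal B\uplus\mathcal C$ with $\mathcal C$ an odd crystal, which is then handled by the structural Lemma~\ref{bullgrowth}:
\begin{itemize}
\item If $a\neq b$ and $G$ has a spanning triangle-tree, the count $|E(G)|=2|V(G)|-2$ together with parity forces that tree to be $G$ minus one edge and a triangle-path. Hence $G$ is an odd crystal.
\item If $a=b$, either $G=\mathcal C\oplus_{aw}K_4$, which Corollary~\ref{cor:crystal-wheel-sum} handles, or $G$ has no spanning triangle-tree.
\end{itemize}
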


It is known that locally connected graphs, graph squares, $2$-connected
chordal graphs, triangulations of surfaces, and certain graph products are
triangularly connected; see, for example,~\cite{Imrich2010,Lai2003}.
Consequently, we obtain the following corollary.

\begin{corollary}
Let $G$ be a connected bridgeless graph that is not an odd wheel. Then  $G$ admits an $S^1$-flow if $G$ belongs to one
of the following classes:
\begin{enumerate}[label=(\arabic*)]
    \item \label{cor:s1-classes:square} the square of a connected graph with at least three vertices;
    \item \label{cor:s1-classes:locally-connected} a locally connected graph;
    \item \label{cor:s1-classes:chordal} a $2$-connected chordal graph.
\end{enumerate}
\end{corollary}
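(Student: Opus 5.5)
The corollary follows from Theorem~\ref{thm:s1-characterization}\ref{thm:s1-characterization:triangular} once we check two things for each listed class: the graph is triangularly connected, and it is bridgeless. Bridgelessness is assumed. Non-oddness-of-wheel is assumed as well. So the work is to verify triangular connectivity in each case, and we may cite~\cite{Imrich2010,Lai2003} for the standard facts.

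\textbf{Case~\ref{cor:s1-classes:locally-connected}: locally connected graphs.} Take a connected locally connected graph $G$ with at least one edge. Two edges $xy$ and $xz$ sharing a vertex $x$ lie in a triangle-path. Indeed, $y$ and $z$ are joined by a path $y=w_0w_1\cdots w_k=z$ in $G[N(x)]$, and the triangles $xw_{i-1}w_i$ form a fan. A fan is a triangle-path, or can be trimmed to one, so it contains both edges.

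For two arbitrary edges, connectivity of $G$ gives a sequence of edges from one to the other in which consecutive edges share a vertex. We then glue the fans along shared triangles. This gives a triangle-walk, which we shorten to a triangle-path. This chaining step is the only delicate point, and it is exactly the standard lemma of~\cite{Lai2003}, which we invoke.

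\textbf{Case~\ref{cor:s1-classes:square}: squares of connected graphs.} Let $H$ be connected with $|V(H)|\ge 3$. Then $G=H^2$ is locally connected: for a vertex $v$ with $H$-neighbors $u_1,\dots,u_t$, every vertex of $N_{G}(v)$ is adjacent in $G$ to some $u_i$, and the $u_i$ are pairwise adjacent in $G$. Also $G$ is connected and, since $|V(H)|\ge3$, bridgeless, since every edge of $H^2$ lies in a triangle. So this case reduces to Case~\ref{cor:s1-classes:locally-connected}.

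\textbf{Case~\ref{cor:s1-classes:chordal}: $2$-connected chordal graphs.} Let $G$ be $2$-connected and chordal. Every vertex neighborhood of such a graph is connected: given $y,z\in N(x)$, a shortest $y$--$z$ path avoiding $x$ closes a cycle through $x$, and chordality produces a chord from $x$ forming a path in $N(x)$. Hence $G$ is locally connected and Case~\ref{cor:s1-classes:locally-connected} applies.

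\textbf{Conclusion.} In every case $G$ is a bridgeless triangularly connected graph that is not an odd wheel, so Theorem~\ref{thm:s1-characterization}\ref{thm:s1-characterization:triangular} gives an $S^1$-flow.

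The main obstacle is the chaining argument in Case~\ref{cor:s1-classes:locally-connected}, namely that a connected locally connected graph is triangularly connected. Since this is known, I would cite it rather than reprove it.
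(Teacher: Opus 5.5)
Your proposal is correct and follows the paper's route: the paper also derives the corollary directly from Theorem~\ref{thm:s1-characterization}\ref{thm:s1-characterization:triangular}, citing \cite{Imrich2010,Lai2003} for the triangular connectivity of these classes. Your reduction of squares and $2$-connected chordal graphs to the locally connected case is a sound small refinement. In the chordal case, though, a single chord does not put the path inside $N(x)$; apply chordality to a minimal subpath of the shortest $y$--$z$ path whose interior leaves $N(x)$, which gives a chordless cycle of length at least four and hence a contradiction.
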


The remainder of the paper is organized as follows. In Section~2, we prove
Theorem~\ref{thm:degree4}. In Section~3, we introduce several reduction
techniques for $S^1$-flows. Sections~4 and~5 prove the two parts of
Theorem~\ref{thm:s1-characterization}, respectively. Finally, Section~6 compares
triangularly connected graphs with graphs containing a spanning triangle-tree
and proposes a conjecture concerning the relationship between $S^1$-flows and
nowhere-zero $4$-flows.

\section{$S^1$-flows and $3$-flows--Proof of Theorem~\ref{thm:degree4}}
In this section, we prove Theorem~\ref{thm:degree4}. 
We need the following elementary proposition.

\begin{proposition}
\label{prop:antipodal}
Let $\bm\alpha_1, \bm\alpha_2, \bm\alpha_3, \bm\alpha_4$ be four vectors in $\mathbb{R}^2$ with the same length. If $\bm\alpha_1 + \bm\alpha_2 + \bm\alpha_3 + \bm\alpha_4 = \bm0$, then they can be partitioned into two antipodal pairs. 
\end{proposition}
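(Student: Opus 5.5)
We argue by pairing the vectors and showing that each pair sum has a unit-free description. If the common length is $0$ there is nothing to prove, so we may rescale and assume all four vectors are unit vectors. Then $\bm\alpha_1+\bm\alpha_2 = -(\bm\alpha_3+\bm\alpha_4)$, and the two pair sums have the same norm. Write $\bm s=\bm\alpha_1+\bm\alpha_2$.

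\emph{Case $\bm s=\bm 0$.} Then $\bm\alpha_2=-\bm\alpha_1$ and $\bm\alpha_4=-\bm\alpha_3$. This already partitions the vectors into the antipodal pairs $\{\bm\alpha_1,\bm\alpha_2\}$ and $\{\bm\alpha_3,\bm\alpha_4\}$.

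\emph{Case $\bm s\neq\bm 0$.} The key observation is that for a unit vector $\bm x$ and a fixed nonzero $\bm s$ with $\|\bm s\|\le 2$, the equation $\bm x+\bm y=\bm s$ with $\bm y$ also a unit vector determines the unordered pair $\{\bm x,\bm y\}$. Indeed, $\|\bm s-\bm x\|=1$ forces $\langle \bm x,\bm s\rangle=\|\bm s\|^2/2$. So $\bm x$ lies on the line perpendicular to $\bm s$ through $\bm s/2$, which meets the unit circle in at most two points, symmetric about the direction of $\bm s$. Those two points sum to $\bm s$, so they are exactly $\bm x$ and $\bm y$.

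Apply this to both pairs. The vectors $-\bm\alpha_3$ and $-\bm\alpha_4$ are unit vectors summing to $\bm s$, so $\{-\bm\alpha_3,-\bm\alpha_4\}=\{\bm\alpha_1,\bm\alpha_2\}$. Hence $\bm\alpha_3=-\bm\alpha_i$ and $\bm\alpha_4=-\bm\alpha_j$ with $\{i,j\}=\{1,2\}$, which gives the pairs $\{\bm\alpha_1,\bm\alpha_3\},\{\bm\alpha_2,\bm\alpha_4\}$ or $\{\bm\alpha_1,\bm\alpha_4\},\{\bm\alpha_2,\bm\alpha_3\}$. Both are partitions into antipodal pairs.

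The main point requiring care is the uniqueness step: a line meets a circle in at most two points. The degenerate case $\bm x=\bm y=\bm s/2$, where the line is tangent, is also handled, since the multiset equality still holds. An alternative is to identify $\mathbb{R}^2$ with $\mathbb{C}$ and use $\bar z=1/z$ on the unit circle. Then $a+b=-(c+d)$ and $1/a+1/b=-(1/c+1/d)$ give $ab=cd$ when $a+b\neq0$, so $\{a,b\}$ and $\{-c,-d\}$ are the roots of the same quadratic. I would present this version since it is shorter.
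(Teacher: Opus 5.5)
Your proof is correct and follows the paper's argument. You rescale to unit vectors and dispose of the case $\bm s=\bm 0$; otherwise you use that an unordered pair of unit vectors with prescribed nonzero sum is unique, which gives $\{-\bm\alpha_3,-\bm\alpha_4\}=\{\bm\alpha_1,\bm\alpha_2\}$. Your line-meets-circle argument (and equally the complex-number version with $ab=cd$) is just a more explicit justification of the paper's one-line projection argument for that uniqueness step.
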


\begin{proof}
If the common length is zero, any pairing works. After scaling, assume that all four vectors are unit vectors, and put $\bm s=\bm\alpha_1+\bm\alpha_2=-(\bm\alpha_3+\bm\alpha_4)$. If $\bm s=\bm0$, then $\bm\alpha_2=-\bm\alpha_1$ and $\bm\alpha_4=-\bm\alpha_3$. Suppose that $\bm s\ne\bm0$. An unordered pair of unit vectors with prescribed nonzero sum $\bm s$ is unique: the two vectors have equal projection $\lVert\bm s\rVert/2$ in the direction of $\bm s$, and their perpendicular components are opposite and have a uniquely determined length. Hence $\{\bm\alpha_3,\bm\alpha_4\}=\{-\bm\alpha_1,-\bm\alpha_2\}$, which gives the
required two antipodal pairs.
\end{proof}

\noindent
\begin{proof}[{\bf Proof of Theorem~\ref{thm:degree4}.}]  By Theorem~\ref{thm:3-R3-S1-flow}, we only need to prove that  if $G$ admits an
$S^1$-flow, then it admits  a nowhere-zero $3$-flow.  Suppose otherwise, and choose a
counterexample $G$ with $|V_4(G)|$ minimum. By
Theorem~\ref{thm:3-R3-S1-flow}, $V_4(G)\ne\varnothing$.

Let $(D,\phi)$ be an $S^1$-flow of $G$, let $x$ be a $4$-vertex, and let
$e_1,e_2,e_3,e_4$ be its incident edges. Without loss of generality, assume that each $e_i$ is oriented toward $x$.    Let $\bm\alpha_i = \phi(e_i)$. Then
 \[
 \bm\alpha_1 + \bm\alpha_2 + \bm\alpha_3 + \bm\alpha_4 = \bm0.
 \]
 Since $\phi$ is an $S^1$-flow,  each $\bm\alpha_i$ is a unit vector. Thus by Proposition~\ref{prop:antipodal}, without loss of generality,  we may assume that $\bm\alpha_1 = -\bm\alpha_2$ and $\bm\alpha_3=-\bm\alpha_4$.  Let $G_1$ be the graph obtained from $G$ by splitting
$x$ into two $2$-vertices $x_1,x_2$, where $x_1$ is incident with
$e_1,e_2$ and $x_2$ with $e_3,e_4$.  The same orientation and edge values
form an $S^1$-flow of $G_1$. Since $|V_4(G_1)| = |V_4(G)| -1$, by the minimality of $G$, $G_1$ admits a nowhere-zero $3$-flow. Identifying $x_1$ with $x_2$, one can obtain a nowhere-zero $3$-flow of $G$ from any nowhere-zero $3$-flow of $G_1$, a contradiction. 
 \end{proof}

\section{Reduction operations for $S^1$-flows}
\label{sec:reduction}

In this section, we establish several reduction results for $S^1$-flows, including lemmas concerning 2- and 3-edge-cuts, $2$-sums, bull-growth operations, and wheel contractions.

\subsection{Small edge cuts and $S^1$-flow extensions at  $3^-$-vertices}

We first need some basic facts on the unit vectors in $\mathbb{R}^2$.
Throughout the paper, all vectors are treated as row vectors, and
$\bm a^{\mathsf T}$ denotes the transpose of a row vector $\bm a$.

\begin{proposition}
\label{prop:basic}
Let $\bm a,\bm b,\bm c$ be three vectors in $S^1$. We have the following:

{\makeatletter\def\@currentlabel{(1)}\label{prop:basic:1}\makeatother}
(1) There is an orthogonal matrix $A$ such that $\bm aA=\bm b$.

{\makeatletter\def\@currentlabel{(2)}\label{prop:basic:2}\makeatother}
(2) If $\bm a,\bm b,\bm c$ are the three cube roots of unity, then for any permutation $\bm a_1,\bm a_2,\bm a_3$ of $\bm a,\bm b,\bm c$, there is an orthogonal matrix $A$ such that $\bm aA=\bm a_1$, $\bm bA=\bm a_2$, and $\bm cA=\bm a_3$.
\end{proposition}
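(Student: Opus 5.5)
Both parts reduce to writing down explicit rotation and reflection matrices acting on row vectors. We identify a unit vector with its angle, so $\bm a=(\cos\theta,\sin\theta)$.

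For part (1), write $\bm a=(\cos\theta,\sin\theta)$ and $\bm b=(\cos\psi,\sin\psi)$. Take $A$ to be the rotation by $\psi-\theta$ in the row-vector convention:
\[
A=\begin{pmatrix}\cos(\psi-\theta) & \sin(\psi-\theta)\\ -\sin(\psi-\theta) & \cos(\psi-\theta)\end{pmatrix}.
\]
Then $AA^{\mathsf T}=I$, so $A$ is orthogonal. The angle-addition formulas give $(\cos\theta,\sin\theta)A=(\cos\psi,\sin\psi)$, that is, $\bm aA=\bm b$.

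For part (2), the three cube roots of unity are $\bm a$, $\bm b=\bm aR$ and $\bm c=\bm aR^2$, up to relabeling, where $R$ is the rotation by $2\pi/3$. The symmetric group $S_3$ is generated by a $3$-cycle and a transposition, so it suffices to realize these two and then compose.

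1. The rotation $R$ maps $\bm a\mapsto\bm b\mapsto\bm c\mapsto\bm a$. This realizes one $3$-cycle, and $R^2$ realizes the other.

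2. Let $F$ be the reflection across the line through $\bm a$. It fixes $\bm a$ and maps $\bm aR$ to $\bm aR^{-1}=\bm aR^2$, so it swaps $\bm b$ and $\bm c$.

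3. The group generated by $R$ and $F$ is the dihedral group of order $6$. Each element is orthogonal, and the elements induce pairwise distinct permutations of $\{\bm a,\bm b,\bm c\}$. Hence all $6$ permutations are realized.

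4. For a prescribed permutation $(\bm a_1,\bm a_2,\bm a_3)$, take the corresponding element $A$ of this group. It satisfies $\bm aA=\bm a_1$, $\bm bA=\bm a_2$ and $\bm cA=\bm a_3$.

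If the labeling has $\bm b=\bm aR^2$ instead of $\bm aR$, the same argument applies with $R$ replaced by $R^{-1}$.

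The only real obstacle is bookkeeping. With row vectors acting on the right, the composition order of the matrices reverses, and this affects which product realizes a given permutation. The counting argument in step 3 avoids having to track this: it only needs the dihedral group of order $6$ to act faithfully on the three points, which it does because the three points span $\mathbb{R}^2$.
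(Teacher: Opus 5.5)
Your proof is correct and follows essentially the same route as the paper: in part (1) you exhibit an orthogonal map sending $\bm a$ to $\bm b$ (you write the rotation explicitly, while the paper extends $\bm a,\bm b$ to orthonormal bases), and in part (2) you use the rotation by $2\pi/3$ and the reflection in the line through $\bm a$ to realize all six permutations. Your faithfulness remark justifies the count that the paper leaves implicit: since the three points span $\mathbb{R}^2$, distinct group elements induce distinct permutations.
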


\begin{proof}
 \ref{prop:basic:1}   Extend $\bm a$ and $\bm b$ to orthonormal bases $\{\bm a,\bm a'\}$ and $\{\bm b,\bm b'\}$ of $\mathbb{R}^2$, respectively. The linear transformation mapping $\bm a$ to $\bm b$ and $\bm a'$ to $\bm b'$ is orthogonal. Hence there exists an orthogonal matrix $A$ such that $\bm aA=\bm b$.

\medskip
\ref{prop:basic:2} Denote $\bm a=(x,y)$, where $x^2+y^2=1$, and let
\[
R(\theta)=
\begin{pmatrix}
\cos\theta&\sin\theta\\
-\sin\theta&\cos\theta
\end{pmatrix},
\qquad
M=2\bm a^{\mathsf T}\bm a-I=
\begin{pmatrix}
2x^2-1&2xy\\
2xy&2y^2-1
\end{pmatrix}.
\]
Here $R(\theta)$ is the rotation through angle $\theta$, while $M$ is the reflection in the line through the origin in the direction of $\bm a$. Since $\bm a,\bm b,\bm c$ are the three cube roots of unity, $R(2\pi/3)$ and
$R(-2\pi/3)$ induce the two nontrivial cyclic permutations of $\bm a,\bm b,\bm c$.
Moreover, $M$ fixes $\bm a$ and interchanges $\bm b$ and $\bm c$. Consequently, the six matrices $R(2i\pi/3)$ and
$MR(2i\pi/3)$, where $i\in\{0,1,2\}$, realize all six permutations of
$\bm a,\bm b,\bm c$. Thus, for every permutation
$(\bm a_1,\bm a_2,\bm a_3)$ of $(\bm a,\bm b,\bm c)$, there is an orthogonal matrix $A$ such that
$\bm aA=\bm a_1$, $\bm bA=\bm a_2$, and $\bm cA=\bm a_3$.
\end{proof}

For a vertex set $X\subseteq V(G)$, let $\delta_G(X)$ denote the set of
edges with exactly one end in $X$; when $X=\{v\}$, we write
$\delta_G(v)$ for $\delta_G(\{v\})$. We omit the subscript when the graph
is clear. By Proposition~\ref{prop:basic}, we have the following lemma.

\begin{lemma}
\label{le:extension}
Let $G$ be a connected bridgeless graph that admits an $S^1$-flow $(D,\psi)$.

\begin{enumerate}[label=(\arabic*)]
\item \label{le:extension:1} For any edge $e$ in $G$ and any vector $\bm \alpha\in S^1$, $G$ has an $S^1$-flow $(D,\phi)$ such that $\phi(e)=\bm \alpha$.

\item \label{le:extension:2} Let $v$ be a $3^-$-vertex in $G$. For any
$\gamma:\delta(v)\to S^1$ satisfying
\[
\partial_D\gamma(v):=
\sum_{e\in E_D^+(v)}\gamma(e)-
\sum_{e\in E_D^-(v)}\gamma(e)=\bm0,
\]
there is an $S^1$-flow $(D,\phi)$ of $G$ such that
$\phi|_{\delta(v)}=\gamma$.
\end{enumerate}
\end{lemma}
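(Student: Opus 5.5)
The whole lemma rests on one observation: post-composing an $S^1$-flow with an orthogonal matrix gives another $S^1$-flow with the same orientation. Indeed, if $(D,\psi)$ is an $S^1$-flow and $A$ is orthogonal, define $\phi(e)=\psi(e)A$. Each $\phi(e)$ is again a unit vector, since orthogonal maps preserve norm. Linearity gives $\partial_D\phi(v)=(\partial_D\psi(v))A=\bm 0$ at every vertex. So $(D,\phi)$ is an $S^1$-flow of $G$. Both parts then reduce to choosing a suitable $A$ via Proposition~\ref{prop:basic}.

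For part~\ref{le:extension:1}, we want $\phi(e)=\bm\alpha$. Both $\psi(e)$ and $\bm\alpha$ lie in $S^1$, so Proposition~\ref{prop:basic}\ref{prop:basic:1} gives an orthogonal $A$ with $\psi(e)A=\bm\alpha$. Setting $\phi=\psi A$ finishes this part.

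For part~\ref{le:extension:2}, we may reverse edges and negate values simultaneously, so assume every edge of $\delta(v)$ is directed into $v$ under both $D$ and the convention for $\gamma$. Being careful that the reversal is applied identically to $\psi$ and $\gamma$, this sign bookkeeping is harmless. We then split by $d(v)$.
\begin{itemize}
\item $d(v)=1$: the edge at $v$ is a bridge, which is excluded. (The case $d(v)=0$ is vacuous.)
\item $d(v)=2$, with edges $e_1,e_2$: the boundary condition forces $\psi(e_1)=-\psi(e_2)$ and likewise $\gamma(e_1)=-\gamma(e_2)$ under the common orientation convention. So it suffices to match a single edge, which part~\ref{le:extension:1} does.
\item $d(v)=3$, with edges $e_1,e_2,e_3$: three unit vectors summing to zero must be the three cube roots of unity up to a rotation. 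If $\bm a+\bm b+\bm c=\bm 0$ with unit vectors, then $\|\bm a+\bm b\|=1$ forces $\langle\bm a,\bm b\rangle=-1/2$, and similarly for the other pairs, so the pairwise angles are all $2\pi/3$. Apply a rotation sending $\{\psi(e_i)\}$ to the standard cube roots $R_3$, and another sending $\{\gamma(e_i)\}$ to $R_3$. Proposition~\ref{prop:basic}\ref{prop:basic:2} then supplies an orthogonal matrix realizing the needed permutation, i.e., sending $\psi(e_i)$ to $\gamma(e_i)$ for each $i$. Composing the three orthogonal maps gives $A$ with $\psi(e_i)A=\gamma(e_i)$ for all $i$, and $\phi=\psi A$ works.
\end{itemize}

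I expect the only real obstacle to be the degree-3 case. It needs the fact that three unit vectors summing to zero form a rotated copy of $R_3$, and care that Proposition~\ref{prop:basic}\ref{prop:basic:2} is applied after conjugating by rotations so that an arbitrary labelled triple can be matched. The rest is routine. Parallel edges cause no issue, and any loops were deleted by convention.
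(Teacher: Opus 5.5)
Your proof is correct and follows essentially the same route as the paper: in both parts you post-compose $\psi$ with an orthogonal matrix from Proposition~\ref{prop:basic}. For $d(v)=2$ you match a single edge, and for $d(v)=3$ you move both triples onto the cube roots of unity and realize the needed permutation, which is exactly the paper's $\phi=\psi BAC^{-1}$. Your inner-product check that three unit vectors summing to $\bm 0$ are pairwise at angle $2\pi/3$ supplies a step the paper only asserts.
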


\begin{proof}
\ref{le:extension:1} Let $\bm \beta=\psi(e)\in S^1$. By Proposition~\ref{prop:basic}\ref{prop:basic:1},
there is an orthogonal matrix $A$ such that $\bm \beta A=\bm \alpha$. Define
$\phi: E(G)\to S^1$ by $\phi(e')=\psi(e')A$ for every $e'\in E(G)$.
Since $A$ is orthogonal, $\phi(e')\in S^1$ for every $e'\in E(G)$.
Moreover, for every vertex $x\in V(G)$, we have
$\partial\phi(x)=(\partial \psi(x))A=\bm0$ by the linearity of the
transformation induced by $A$. Thus $(D,\phi)$ is an $S^1$-flow of $G$
with $\phi(e)=\bm \beta A=\bm \alpha$.

\medskip
\ref{le:extension:2}  If $d_G(v)=0$, there is nothing to prescribe. Since $G$ is bridgeless, we have $d_G(v)\ge2$, and hence $d_G(v)\in\{2,3\}$. Without loss of generality, we  assume that all
edges in $\delta(v)$ are directed away from $v$.

We first assume  $d_G(v)=2$. Let  $\delta(v)=\{e_1,e_2\}$. Then 
$\psi(e_1)+\psi(e_2) = \gamma(e_1)+\gamma(e_2)=\bm 0$. By
Proposition~\ref{prop:basic}\ref{prop:basic:1}, there is an orthogonal matrix $A$ such
that $\psi(e_1)A=\gamma(e_1)$. Define $\phi(e')=\psi(e')A$ for every
$e'\in E(G)$. Then   
$\phi(e_2)=\psi(e_2)A=-\psi(e_1)A=-\gamma(e_1)=\gamma(e_2)$.  Since $A$ is  orthogonal, $||\phi(e')||=||\psi(e')A||=||\psi(e')|| = 1$  for each edge $e'$.   Therefore,   $(D,\phi)$ is a desired $S^1$-flow of
$G$.

Now suppose that $d_G(v)=3$. Let
$\delta(v)=\{e_1,e_2,e_3\}$. Then 
 $\psi(e_1)+\psi(e_2)+\psi(e_3)=
\gamma(e_1)+\gamma(e_2)+\gamma(e_3)=\bm 0$. Since the sum of three unit
vectors in $\mathbb{R}^2$ is $\bm0$, each of the triples
$\psi(e_1),\psi(e_2),\psi(e_3)$ and $\gamma(e_1),\gamma(e_2),\gamma(e_3)$ consists
of the three cube roots of unity up to an orthogonal transformation.
Thus, by Proposition~\ref{prop:basic}\ref{prop:basic:1}, there are orthogonal matrices
$B$ and $C$ such that $\psi(e_i)B$ and $\gamma(e_i)C$, $i=1,2,3$, are the three cube roots of unity. By Proposition~\ref{prop:basic}\ref{prop:basic:2}, there is an orthogonal matrix $A$ such that $\psi(e_i)BA=\gamma(e_i)C$ for every $i\in\{1,2,3\}$. Define $\phi(e')=\psi(e')BAC^{-1}$ for every $e'\in E(G)$.   Then $\phi(e_i)=\gamma(e_i)$ for $i=1,2,3$.  Since $BAC^{-1}$ is orthogonal,  $||\phi(e')||=||\psi(e')BAC^{-1}||=||\psi(e')|| = 1$ for each edge $e'$. Therefore,  $(D,\phi)$ is a desired $S^1$-flow of $G$.
\end{proof} 
 
 For an edge $e$ of a graph $G$, we write $G/e$ for the graph obtained from $G$ by contracting the edge $e$, that is, by identifying the two endvertices of $e$ and then deleting all resulting loops. For a subgraph $H$ of $G$, let $G/H$ denote the graph obtained by contracting all edges of $H$ and then deleting the resulting loops.
 
 As a corollary of Lemma~\ref{le:extension}, we have the following lemma  indicating that small edge-cuts are reducible.
 
\begin{lemma}\label{lem:small-cut-s1-flow}
Let $G$ be a $2$-edge-connected graph, and let $M=\delta_G(X)$ with
$2 \leq |M| \leq 3$. Write $G_1=G[X]$ and $G_2=G-X$. Then $G$ admits an
$S^1$-flow if and only if both $G/G_1$ and $G/G_2$ admit $S^1$-flows.
\end{lemma}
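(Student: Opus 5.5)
We prove the two directions separately. Throughout, fix an orientation $D$ of $G$. Contraction does not change the edge set of $M$, so $D$ induces orientations of $G/G_1$ and $G/G_2$. Let $x$ denote the vertex of $G/G_1$ obtained by contracting $G[X]$, and $y$ the vertex of $G/G_2$ obtained by contracting $G-X$. Both $x$ and $y$ have degree $|M|\in\{2,3\}$.

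\emph{Necessity.} Suppose $(D,\phi)$ is an $S^1$-flow of $G$. In $G/G_1$, give each edge its value under $\phi$. Every vertex of $G/G_1$ other than $x$ is a vertex of $G-X$ with the same incident edges, so its boundary is still $\bm 0$. The boundary at $x$ equals the sum of $\partial_D\phi(v)$ over $v\in X$, because each edge with both ends in $X$ contributes once with each sign and cancels. This sum is $\bm 0$, so the restriction is an $S^1$-flow of $G/G_1$. The same argument applies to $G/G_2$. This direction does not use the size of $M$.

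\emph{Sufficiency.} Suppose $(D,\phi_1)$ and $(D,\phi_2)$ are $S^1$-flows of $G/G_1$ and $G/G_2$, respectively. The plan is to modify $\phi_2$ so that it agrees with $\phi_1$ on $M$, and then glue the two flows.

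1. Check that both contracted graphs satisfy the hypotheses of Lemma~\ref{le:extension}. Contracting a connected subgraph of a $2$-edge-connected graph preserves $2$-edge-connectivity, since every edge cut of the contracted graph is also an edge cut of $G$. If $G[X]$ or $G-X$ is disconnected, we contract each component; alternatively, we use the connectivity of $G$ together with the fact that Lemma~\ref{le:extension} applies componentwise. Hence $G/G_1$ and $G/G_2$ are connected and bridgeless.

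2. Define $\gamma=\phi_1|_M$. Reading $M$ as $\delta(y)$ in $G/G_2$, we need $\partial_D\gamma(y)=\bm 0$. This holds because $\partial_D\gamma(y)=\pm\partial_D\phi_1(x)=\bm 0$: each edge of $M$ is oriented out of $X$ exactly when it is oriented into $V(G)\setminus X$, so the two boundaries differ only by a global sign.

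3. The vertex $y$ is a $3^-$-vertex of $G/G_2$. By Lemma~\ref{le:extension}\ref{le:extension:2}, $G/G_2$ has an $S^1$-flow $(D,\phi_2')$ with $\phi_2'|_M=\gamma$.

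4. Define $\phi$ on $E(G)$ by $\phi=\phi_1$ on $E(G-X)\cup M$ and $\phi=\phi_2'$ on $E(G[X])$. The two definitions agree on $M$. For $v\in X$, the incident edges of $v$ in $G$ are exactly its incident edges in $G/G_2$, so $\partial_D\phi(v)=\bm 0$. Vertices outside $X$ are handled the same way using $G/G_1$. Hence $(D,\phi)$ is an $S^1$-flow of $G$.

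The main obstacle is step 3, and it is the only place where $|M|\le3$ is used. For $|M|\ge4$, the values of a flow on $\delta(y)$ need not be matchable by an orthogonal map. Lemma~\ref{le:extension}\ref{le:extension:2} handles exactly this: three unit vectors summing to zero form a rotated or reflected copy of the cube roots of unity, and every permutation of them is realized by an orthogonal matrix. The remaining care is the connectivity bookkeeping in step 1.
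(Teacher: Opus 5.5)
Your proof is correct and follows the paper's argument: necessity holds because contraction preserves flows, and sufficiency applies Lemma~\ref{le:extension}\ref{le:extension:2} at the contracted vertex of degree $|M|\le 3$ to make the second flow agree with the first on $M$, then glues the two. Fixing one orientation $D$ of $G$ throughout, rather than orienting $M$ toward the contracted vertex as the paper does, is only a bookkeeping difference. The hedge in step~1 about disconnected sides is unnecessary: if $G[X]$ (or $G-X$) had two components $A$ and $B$, then $\delta(A)$ and $\delta(B)$ would be disjoint subsets of $M$, each of size at least $2$ by $2$-edge-connectivity, forcing $|M|\ge 4$; so both sides are connected and $x,y$ really are single vertices of degree $|M|$.
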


 \begin{proof}
Clearly, both $G_1$ and $G_2$ are connected.
Since contraction preserves flows, if $G$ admits an $S^1$-flow, then both $G/G_1$ and $G/G_2$ admit $S^1$-flows.

Now assume that both $G/G_1$ and $G/G_2$ admit $S^1$-flows. Let $H_1=G/G_2$ and $H_2=G/G_1$, and for $i\in\{1,2\}$, let $v_i$ be the vertex of $H_i$ obtained by contracting $G_{3-i}$. Note that $\delta_{H_1}(v_1)=\delta_{H_2}(v_2)=M$.  Let $(D_1,\phi_1)$ be an $S^1$-flow of $H_1$. Without loss of generality, assume that  all edges in $\delta_{H_1}(v_1)$ are oriented toward $v_1$ and  thus $\sum_{e\in M}\phi_1(e)=\bm0$.

Define $\gamma: M\to S^1$ by $\gamma(e)=\phi_1(e)$ for every $e\in M$.  Let $D_2$ be an orientation of $H_2$ such that all edges in $\delta_{H_2}(v_2)$ are oriented  away  from $v_2$. Then
$\partial_{D_2}\gamma(v_2)=\bm0$. Since $G$ is $2$-edge-connected,
$H_2$ is bridgeless. Since $2\le |M|\le3$, we have
$2\le d_{H_2}(v_2)=|M|\le3$. By Lemma~\ref{le:extension}\ref{le:extension:2}, $H_2$ admits
an $S^1$-flow $(D_2,\phi_2)$ such that
$\phi_2(e)=\gamma(e)=\phi_1(e)$ for every $e\in M$.

Therefore, the two $S^1$-flows $(D_1,\phi_1)$ and $(D_2, \phi_2)$ can be combined into an $S^1$-flow of $G$.
\end{proof}

\subsection{$2$-sums  and $S^1$-flows}
We next establish reduction results based on the following operation, called a
``$2$-sum''.

\begin{definition}
For two graphs $G_1$ and $G_2$, we say that $G$ is the
\emph{$2$-sum} of $G_1$ and $G_2$ along an edge $e=uv$, denoted by
$G=G_1\oplus_e G_2$ (or simply $G=G_1\oplus_2 G_2$ when the edge $e$ is
clear from the context), if
$E(G)=E(G_1)\cup E(G_2)$,
$E(G_1)\cap E(G_2)=\{e\}$, and
$V(G_1)\cap V(G_2)=\{u,v\}$.
\end{definition}

Informally, the $2$-sum operation can be viewed as gluing two graphs along
the edge $e$. For example, a $2$-sum of two copies of $K_4$ is illustrated in
\Cref{2sum}.

\begin{figure}[htbp]  
\centering
\begin{tikzpicture}[
    >=latex,
    roundnode/.style={
        circle,
        draw=black,
        thick,
        minimum size=1.8mm,
        inner sep=0pt,
        fill=white
    },
    scale=0.45
]

\node[roundnode] (1) at (-4,0) {};
\node[roundnode] (2) at (0,-2) {};
\node[roundnode] (3) at (0,2) {};
\node[roundnode] (4) at (-1.4,0) {};
\node[roundnode] (5) at (1.4,0) {};
\node[roundnode] (6) at (4,0) {};

\draw[line width=0.6pt]
(1)--(2)--(6)--(3)--(1)--(4)--(2)--(5)--(6);

\draw[line width=0.6pt]
(4)--(3);

\draw[line width=1.2pt]
(2)--(3);

\draw[line width=0.6pt]
(3)--(5);

\node at (0.4,0) {$e$};

\end{tikzpicture}
\caption{$K_4\oplus_e K_4$}
\label{2sum}
\end{figure}

 We give the following theorem for constructing $S^1$-flows from $2$-sums.

\begin{theorem}
\label{thm:intro-2-sum-general}
Let $G=G_1\oplus_e G_2$. 

\begin{enumerate}[label=(\roman*)]
\item \label{thm:intro-2-sum-general:i} If both $G_1$ and $G_2$  admit $S^1$-flows, then both $G$ and $G-e$ admit $S^1$-flows.

\item \label{thm:intro-2-sum-general:ii} If  $G_1/e$ admits a nowhere-zero $3$-flow and $G_2$ admits an $S^1$-flow, then  $G$ admits an $S^1$-flow.

\item \label{thm:intro-2-sum-general:iii} If $G_1$ and $G_2-e$ both admit  $S^1$-flows, then $G$ admits an $S^1$-flow.

\item \label{thm:intro-2-sum-general:iv} If $G_1$ and $G_2$ are bridgeless and both $G_1/e$ and $G_2/e$ admit nowhere-zero $3$-flows,  then $G$ admits an $S^1$-flow.
\end{enumerate}
\end{theorem}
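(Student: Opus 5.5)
The plan is to handle parts (i)--(iii) by gluing flows on the two sides along $\{u,v\}$, and to reduce (ii) and (iv) to these by a mod-$3$ case split on how the $3$-flow of $G_i/e$ behaves at $u$. Throughout, orient $e$ from $u$ to $v$ in every graph containing it. I would use repeatedly the fact behind Lemma~\ref{le:extension}\ref{le:extension:1}: multiplying an $S^1$-flow by an orthogonal matrix gives another $S^1$-flow, so an $S^1$-flow may be rotated until its value on $e$ is any prescribed unit vector. This needs no connectivity hypothesis.

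For (i), take $S^1$-flows $\phi_1,\phi_2$ of $G_1,G_2$.
\begin{itemize}
\item For $G-e$: rotate so that $\phi_2(e)=-\phi_1(e)$. On $G_1-e$ the boundary of $\phi_1$ is $-\phi_1(e)$ at $u$ and $+\phi_1(e)$ at $v$; on $G_2-e$ it is the negative of this. Taking $\phi_1$ on $E(G_1)-e$ and $\phi_2$ on $E(G_2)-e$ therefore gives an $S^1$-flow of $G-e$.
\item For $G$: rotate so that $\phi_1(e)=\bm a$ and $\phi_2(e)=\bm b$, where $\bm a,\bm b$ are two cube roots of unity. Then $\bm a+\bm b=-\bm c$ is a unit vector. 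Keep $\phi_1,\phi_2$ on all edges other than $e$ and put $-\bm c$ on $e$. The boundaries at $u$ and $v$ add up correctly, so this is an $S^1$-flow of $G$.
\end{itemize}

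Part (iii) is immediate: $G$ is the edge-disjoint union of $G_1$ and $G_2-e$, which share only $u$ and $v$. Hence the union of their $S^1$-flows is an $S^1$-flow of $G$.

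For (ii), I would first prove the following claim: if $G_1/e$ admits a nowhere-zero $3$-flow, then $G_1$ or $G_1-e$ admits one.
\begin{itemize}
\item Take a nowhere-zero $\mathbb{Z}_3$-flow $f$ of $G_1/e$ and regard it as a function on $E(G_1)-e$.
\item In $G_1-e$, its boundary is $r$ at $u$ and $-r$ at $v$ for some $r\in\mathbb{Z}_3$.
\item If $r\ne0$, setting $f(e)=\mp r$ (sign fixed by the orientation) gives a nowhere-zero $\mathbb{Z}_3$-flow of $G_1$. If $r=0$, then $f$ is already a nowhere-zero $\mathbb{Z}_3$-flow of $G_1-e$.
\end{itemize}
By Tutte's equivalence and Theorem~\ref{thm:3-R3-S1-flow}, the graph found also has an $S^1$-flow. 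In the first case apply (i) to $G_1,G_2$. In the second case apply (iii) with the roles of $G_1,G_2$ swapped.

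For (iv), apply the claim to both sides. If some $G_i$ itself has a nowhere-zero $3$-flow, we are in case (ii). Note that $G_j/e$ having a nowhere-zero $3$-flow does not by itself give an $S^1$-flow of $G_j$. So the argument runs as follows:
\begin{itemize}
\item If both $G_1$ and $G_2$ have nowhere-zero $3$-flows, use (i).
\item If $G_1$ does and $G_2-e$ does, use (iii).
\item If $G_2$ does and $G_1-e$ does, use (iii) symmetrically.
\end{itemize}

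The remaining case is the main obstacle. Here neither $G_1$ nor $G_2$ has a nowhere-zero $3$-flow, although $G_1-e$ and $G_2-e$ do; equivalently, every $\mathbb{Z}_3$-flow of each $G_i/e$ has $r=0$. Gluing gives a flow of $G-e$, but $e$ must still receive a unit value, and this is where bridgelessness must enter.

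My first attempt would be to pick cycles $C_i$ through $e$ in $G_i$, using that $G_i$ is bridgeless, and set $P_i=C_i-e$. I would take $R_3$-flows $\psi_i$ of $G_i-e$, apply independent orthogonal maps to each side, and then push an extra circulation $\bm t$ around $e+P_1+P_2$. The aim is to choose the rotations so that along each $P_i$ the oriented values $\psi_i$ together with $\bm t$ stay on $S^1$.

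If a uniform $\bm t$ is impossible because the values along $P_i$ vary over $\pm R_3$, I would try to exploit the forced structure instead. The condition "$r=0$ for every $\mathbb{Z}_3$-flow of $G_i/e$" should force $u,v$ to behave like a $2$-edge-cut-type obstruction. Combining this with Lemma~\ref{lem:small-cut-s1-flow} and Lemma~\ref{le:extension}\ref{le:extension:2}, I would try to find a $3^-$-vertex or small cut through which a prescribed boundary, including a unit value on $e$, can be extended into each side.
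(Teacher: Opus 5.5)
\textbf{Parts (i)--(iii) and the reduction in (iv).} Your parts (i) and (iii) are correct and are essentially the paper's argument. Your route to (ii) differs from the paper's. The paper adds an edge $f$ parallel to $e$ in $G_1$, gets a nowhere-zero $3$-flow of $G_1+f$ from digon contractibility (Theorem~\ref{thm:Z3-connected}), and applies (i). Your mod-$3$ lifting claim is valid and more elementary. It needs one extra line for edges of $G_1$ parallel to $e$: these become loops in $G_1/e$ and receive no lifted value. If such an edge exists, the two or more parallel $u$--$v$ edges can absorb any boundary $r\in\mathbb{Z}_3$ with nonzero values, so $G_1$ itself has a nowhere-zero $3$-flow. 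Your reduction of (iv) to the case where $G_1-e$ and $G_2-e$ have nowhere-zero $3$-flows but $G_1,G_2$ do not is also fine.

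\textbf{The gap.} That residual case is the whole content of (iv), and you leave it unproved. Moreover, the sketched attack cannot work as stated. Start from a hexagon-valued $\psi_i$ (values in $\pm R$ for a rotated copy $R$ of $R_3$), put a unit vector $\bm t$ on $e$, and return it through a path $P_i$ on side $i$. An edge of $P_i$ with forward value $\bm x$ stays on $S^1$ only if $\bm x\cdot\bm t=-1/2$. This forces $\bm t$, and also $\bm x+\bm t$, into the same hexagon. You would then obtain an $R_3$-flow of $G_i$, contradicting the case hypothesis. So no single ``uniform'' $\bm t$ can work; the value on $e$ must be split as $\bm t_1+\bm t_2$ with each $\bm t_i$ non-unit. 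But then every forward value along $P_i$ must satisfy $\bm x\cdot\bm t_i=-\lVert\bm t_i\rVert^2/2$, which has at most two unit solutions. For example, when $\lVert\bm t_i\rVert=\sqrt3$ the forward values must lie in one pair of adjacent hexagon vertices. Nothing in your proposal produces a $u$--$v$ path with this property. The fallback is also unsupported. The condition ``$G_i$ has no nowhere-zero $3$-flow but $G_i-e$ does'' gives neither a $2$- or $3$-edge-cut separating $u$ from $v$ nor a $3^-$-vertex. So Lemma~\ref{lem:small-cut-s1-flow} and Lemma~\ref{le:extension}\ref{le:extension:2} have nothing to act on.

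\textbf{The missing idea.} What you need is the paper's Two-Terminal Preflow Lemma (Lemma~\ref{lem:sqrt3-preflow}). It says: if $H$ has a nowhere-zero $3$-flow and $p,q$ lie in one component, then for every unit $\mathbf u$ there is an $S^1$-preflow with boundary $-\sqrt3\,\mathbf u$ at $p$, $\sqrt3\,\mathbf u$ at $q$, and $\bm 0$ elsewhere. It is built globally, not by a circulation along one path:
\begin{enumerate}
\item subtract $3$ along a directed $p$--$q$ path of a positive nowhere-zero $3$-flow;
\item take an Euler trail from $q$ to $p$, plus tours, in the subgraph of $\pm1$-edges;
\item map the resulting integer triple $(f_1,f_2,f_3)$ to $f_1\bm a_1+f_2\bm a_2+f_3\bm a_3$, for an equilateral triangle of side $1$ centred at the origin.
\end{enumerate}
Bridgelessness of $G_i$ puts $u$ and $v$ in the same component of $G_i-e$. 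So each $G_i-e$ gets a preflow whose only defects are $\pm\sqrt3\,\bm a_i$ at $u$ and $v$. The two sides can be rotated independently; choosing $\bm a_1\cdot\bm a_2=-5/6$ makes $\bm c=-\sqrt3(\bm a_1+\bm a_2)$ a unit vector, and $\bm c$ is placed on $e$. In short, each side contributes a non-unit defect of length $\sqrt3$, and a continuous relative rotation of the two sides fuses the two defects into a single unit value on $e$.
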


A \emph{digon} is a graph consisting of two vertices joined by two parallel edges. The following two results are needed to prove Theorem~\ref{thm:intro-2-sum-general}.

\begin{theorem}[\cite{DeVosXY2006,Lai2000}]
\label{thm:Z3-connected}  Let $G$ be a graph and $H$ be a subgraph of $G$. If $H$ is a digon or an even wheel, then 
 $G$ admits a nowhere-zero $3$-flow if and only if $G/H$  admits a nowhere-zero $3$-flow.
\end{theorem}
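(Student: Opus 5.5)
We plan to prove both directions through the standard correspondence between nowhere-zero $3$-flows and nowhere-zero $\mathbb{Z}_3$-flows, together with the notion of $\mathbb{Z}_3$-connectivity. Recall that $H$ is \emph{$\mathbb{Z}_3$-connected} if, for every $\beta:V(H)\to\mathbb{Z}_3$ with $\sum_{v}\beta(v)=0$, there is a nowhere-zero $f:E(H)\to\mathbb{Z}_3$ with $\partial f=\beta$. The forward direction is immediate: contraction preserves flows, so restricting a nowhere-zero $\mathbb{Z}_3$-flow of $G$ to $E(G/H)$ gives one of $G/H$. For the converse, take a nowhere-zero $\mathbb{Z}_3$-flow $g$ of $G/H$ and view it as a function on $E(G)\setminus E(H)$. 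Its boundary $\beta(v)$ at each $v\in V(H)$ sums to $0$ over $V(H)$, because the contracted vertex has zero boundary. If $H$ is $\mathbb{Z}_3$-connected, we pick a nowhere-zero $f$ on $E(H)$ with $\partial f=-\beta$. Then $g\cup f$ is a nowhere-zero $\mathbb{Z}_3$-flow of $G$, and Tutte's theorem converts it into a nowhere-zero $3$-flow. So it suffices to show that digons and even wheels are $\mathbb{Z}_3$-connected.

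For a digon on $u,v$, orient both edges from $u$ to $v$. We need nonzero $f_1,f_2$ with $f_1+f_2=b$, where $b=\beta(u)$. The choices $1+2$, $2+2$ and $1+1$ realize $b=0,1,2$ respectively.

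For the wheel $W_{2k}$, let $w$ be the hub and $v_1,\dots,v_{2k}$ the rim (indices mod $2k$). Write $r_i$ for the value on the rim edge $v_iv_{i+1}$ and $s_i$ for the value on the spoke $wv_i$, oriented toward $v_i$. Conservation at $v_i$ reads
\[
s_i=\beta(v_i)+r_{i-1}-r_i,
\]
and conservation at $w$ follows automatically from $\sum_v\beta(v)=0$. We proceed greedily around the rim. Choose $r_{2k}=r_0\neq0$, then for $i=1,\dots,2k-1$ choose $r_i\notin\{0,\ \beta(v_i)+r_{i-1}\}$, which makes both $r_i$ and $s_i$ nonzero. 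There is always at least one choice, and two choices whenever $\beta(v_i)+r_{i-1}=0$. The one remaining constraint is the closing spoke,
\[
s_{2k}=\beta(v_{2k})+r_{2k-1}-r_0\neq0 .
\]

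The main obstacle is guaranteeing that this closing condition can be met, and this is exactly where evenness is needed: for odd wheels with $\beta\equiv0$ it fails. The plan is to analyze the map from $r_0\in\{1,2\}$ to the set of achievable values of $r_{2k-1}$. When every step has a unique choice, each step acts as a fixed affine map on the nonzero residues $\{1,2\}$. Composing $2k-1$ such maps (an odd number) and then comparing with $r_0$ should show that the two starting values $r_0=1,2$ cannot both force $r_{2k-1}=r_0-\beta(v_{2k})$. When some step has two choices, that freedom alone should be enough to fix the last spoke. If this parity bookkeeping becomes awkward, we would instead induct on $k$: contract the digon created by lifting the pair of consecutive spokes $wv_1,wv_2$ through the rim edge $v_1v_2$, and use the digon case together with the standard fact that if $K\subseteq H$ and both $K$ and $H/K$ are $\mathbb{Z}_3$-connected then so is $H$. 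This reduces $W_{2k}$ to $W_{2k-2}$, with $W_2$ (a triangle with a doubled spoke pattern, checked directly) as the base case.
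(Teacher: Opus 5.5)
Your reduction to $\mathbb{Z}_3$-connectivity of digons and even wheels is the standard route behind the cited results; the paper itself gives no proof and only cites the theorem. The digon check is fine.

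The real weakness is the closing step for $W_{2k}$, which you assert (``should show'', ``should be enough'') rather than prove. The mechanism you describe is also inaccurate. A step with $\beta(v_i)=b\neq0$ is not a map on $\{1,2\}$: $r_i=b$ is forced when $r_{i-1}=b$, but both values are allowed when $r_{i-1}=-b$. Your plan can be completed as follows.
\begin{itemize}
\item Viewed as a relation from $r_{i-1}$ to the allowed $r_i$, every step has image $\{1,2\}$ on input $\{1,2\}$. So once some step offers two choices, both values of $r_{2k-1}$ stay reachable. The closing condition forbids only $r_{2k-1}=r_0-\beta(v_{2k})$, so it can then be met.
\item If $\beta(v_i)\neq0$ for some $1\le i\le 2k-1$, take the least such $i$. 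Then $r_{i-1}=(-1)^{i-1}r_0$, so one of the two choices of $r_0$ makes $r_{i-1}=-\beta(v_i)$, and step $i$ branches.
\item Otherwise every step is $x\mapsto -x$, so $r_{2k-1}=-r_0$ because $2k-1$ is odd. The closing condition becomes $\beta(v_{2k})\neq -r_0$, which one of the two values of $r_0$ satisfies. Evenness is used only in this case.
\end{itemize}

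Discard the fallback induction. Lifting $wv_1,wv_2$ and contracting the resulting digon does not give $W_{2k-2}$. It gives a $(2k-1)$-cycle plus a hub adjacent to every rim vertex except the merged one. For $k=2$ this is $K_4-e$, which is not $\mathbb{Z}_3$-connected. Take boundary $1$ at both $2$-vertices and $2$ at both $3$-vertices. This forces the values on the two paths through the $2$-vertices, and then forces $0$ on the remaining edge. Since contraction preserves $\mathbb{Z}_3$-connectivity, the lifted graph is not $\mathbb{Z}_3$-connected either, so this route cannot work.

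There is also a small omission in your gluing step. Edges of $G$ with both ends in $V(H)$ but not in $H$ (extra parallel edges, rim chords) become loops and are deleted in $G/H$. So $g$ is not defined on all of $E(G)\setminus E(H)$. Assign such edges arbitrary nonzero values before computing $\beta$; this leaves $\sum_{v\in V(H)}\beta(v)=0$ intact. The fix is needed, because the case occurs in the paper's applications (non-induced wheels, successive digon contractions).
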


\begin{definition}
Let $H$ be a graph with an orientation $D$, and let $A$ be either an
abelian group or a real vector space. An assignment $F:E(H)\to A$ is called
an \emph{$A$-preflow} if $\partial F(v)=0$ for every vertex $v$ except
possibly a specified set of vertices. It is \emph{nowhere-zero} if
$F(e)\ne0$ for every edge $e$. When $A=\mathbb R^2$ and
$F(e)\in S^1$ for every edge, we call $F$ an \emph{$S^1$-preflow}.
\end{definition}

The following lemma provides the key preflow construction for obtaining
$S^1$-flows from $2$-sums.

\begin{lemma}[Two-Terminal Preflow Lemma]
\label{lem:sqrt3-preflow}
Let $H$ be a  graph admitting a nowhere-zero $3$-flow. For any two
distinct vertices $p,q\in V(H)$  contained in the same connected component and any unit vector $\mathbf{u}\in\mathbb{R}^2$,
there exists an $S^1$-preflow $F$ such that
$\partial F(v)=\bm{0}$ for all $v\in V(H)\setminus\{p,q\}$,
$\partial F(p)=-\sqrt{3}\,\mathbf{u}$, and
$\partial F(q)=\sqrt{3}\,\mathbf{u}$.
\end{lemma}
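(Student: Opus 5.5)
Add an auxiliary edge and use a nowhere-zero $3$-flow in the enlarged graph. Let $H'=H+pq$ be obtained by adding a new edge $e_0$ oriented from $q$ to $p$; if $pq$ is already an edge, the new edge is parallel to it. We need $H'$ to admit a nowhere-zero $3$-flow in which $e_0$ carries a prescribed nonzero value. This is not automatic, since adding an edge can destroy the existence of a $3$-flow. The idea is to work with $R_3$-flows, i.e.\ flows with values in the cube roots of unity, and to exploit the identity
\[
\bm\omega_1-\bm\omega_2=\sqrt3\,\mathbf v
\]
for two distinct cube roots of unity and a suitable unit vector $\mathbf v$.

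First take a nowhere-zero $3$-flow of $H$. By Theorem~\ref{thm:3-R3-S1-flow} it becomes an $R_3$-flow $(D,\phi)$, which we may view via $\mathbb Z_3$-values $1,2$ mapped to $\bm\omega,\bm\omega^2$. Next choose a path $P$ from $p$ to $q$ in the common component. The plan is to modify $\phi$ along $P$ so that each edge of $P$ changes from one cube root to another in such a way that the boundary changes only at $p$ and $q$. Concretely, add the vector $\bm\delta=\bm\omega^{a}-\bm\omega^{b}$, suitably signed, along $P$. Adding a constant vector along an oriented path keeps every interior vertex balanced and changes the boundaries at $p$ and $q$ by $\mp\bm\delta$.

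The difficulty is keeping every edge value on $S^1$. The cleaner route is group-theoretic. A $\mathbb Z_3$-valued function on $E(H)$ that is nowhere-zero and has $\partial=\mathbf 1_q-\mathbf 1_p$ (times a fixed element) gives, via the map $k\mapsto\bm\omega^k$, boundaries of the form $\bm\omega^{i}-\bm\omega^{j}$ at $p,q$. This works only if the $R_3$-vector sums at $p,q$ are controlled, and vector sums of cube roots do not factor through $\mathbb Z_3$. So instead use the following alternative: take the flow $\phi$ and an edge-disjoint decomposition trick. Let $C$ be the symmetric difference of $P$ with nothing; on each edge $e\in P$, replace $\phi(e)$ by $R\,\phi(e)$, where $R$ is a reflection. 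This fails at interior vertices.

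I therefore expect the main obstacle to be preserving unit length along $P$. My first attempt will be the \emph{rotation trick}: on edges of $P$ oriented forward, replace $\phi(e)=\bm x$ by $\bm x+\bm\delta$, choosing $\bm\delta$ depending on $e$ so that $\bm x+\bm\delta$ is again a cube root. This forces $\bm\delta\in\{\bm\omega^i-\bm\omega^j\}$, and interior balance then requires the same $\bm\delta$ on every edge of $P$ (up to orientation sign). Since all $R_3$-values have the form $\bm\omega^k$, we can first multiply the $\mathbb Z_3$-flow along a suitable cycle structure so that all edges of $P$, oriented from $p$ to $q$, carry the same value $\bm\omega^k$. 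The $\mathbb Z_3$-flow space acts transitively enough for this: in $\mathbb Z_3$, assign value $k$ to every forward edge of $P$ and realize the $\mathbb Z_3$-flow of $H'$ with $e_0$ valued $k$. Such a flow exists provided $H'$ has a $\mathbb Z_3$-flow nonzero off $e_0$. Then replacing $\bm\omega^k$ by $\bm\omega^{k+1}$ on $P$ adds the constant $\bm\delta=\bm\omega^{k+1}-\bm\omega^k$, of length $\sqrt3$. Finally, an orthogonal transformation (Proposition~\ref{prop:basic}\ref{prop:basic:1}) rotates $\bm\delta/\sqrt3$ to $\mathbf u$ and yields the required $S^1$-preflow.
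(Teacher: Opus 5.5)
\textbf{There is a genuine gap.} It lies at the one step that carries all the weight: producing an $R_3$-flow of $H$ in which every edge of some $p$--$q$ path $P$, read from $p$ to $q$, carries the same cube root $\bm\omega^k$. If that were granted, your shift $\bm\omega^k\mapsto\bm\omega^{k+1}$ along $P$ and the final rotation would be correct. The justification you give for it does not work, for three reasons.
\begin{enumerate}
\item There is no edgewise dictionary $k\mapsto\bm\omega^k$ between $\mathbb{Z}_3$-flows and $R_3$-flows. Three edges leaving a vertex with $\mathbb{Z}_3$-value $1$ are balanced in $\mathbb{Z}_3$, yet $3\bm\omega\neq\bm 0$; you noted this yourself a few lines earlier.
\item A $\mathbb{Z}_3$-flow cannot simply be told to take the value $k$ on every edge of a chosen path.
\item The proviso on $H'=H+pq$ is never verified, and it is false in general. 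It amounts to asking for a nowhere-zero $\mathbb{Z}_3$-flow of $H+pq$ with $e_0$ valued $k\neq 0$. But $H=K_4-pq$ is a subdivided theta graph and has a nowhere-zero $3$-flow, while $H+pq=K_4$ has none.
\end{enumerate}

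Worse, the configuration you need can fail to exist at all. In $H=K_4-pq$ with remaining vertices $x,y$, orient the three $x$--$y$ paths $xy$, $xpy$, $xqy$ from $x$ to $y$. Every $S^1$-flow then has $f(xy)=\bm a$, $f(xp)=f(py)=\bm b$, $f(xq)=f(qy)=\bm c$ with $\bm a+\bm b+\bm c=\bm 0$. Along $pxq$, $pyq$, $pxyq$, $pyxq$, the values read from $p$ to $q$ are $(-\bm b,\bm c)$, $(\bm b,-\bm c)$, $(-\bm b,\bm a,-\bm c)$ and $(\bm b,-\bm a,\bm c)$. Constancy would force $\bm a=\bm 0$ in the first two cases and $\bm a+\bm b+\bm c=-\bm a\neq\bm 0$ in the last two. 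So your modification has nothing to act on here, even though the lemma does hold for this $H$.

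The paper performs your ``add a constant along a path'' step on the integer flow instead, where it is harmless.
\begin{enumerate}
\item Take $\varphi\in\{1,2\}$ on an orientation of the component containing $p,q$; this orientation is necessarily strongly connected.
\item Subtract $3$ along a directed $p$--$q$ path. All values stay in $\{\pm1,\pm2\}$, and $\partial\varphi'(p)=-3$, $\partial\varphi'(q)=3$.
\item The edges with $|\varphi'|=1$ have odd vertices exactly $p,q$. Euler trails therefore give $\psi\in\{0,\pm1\}$ of the same parity as $\varphi'$ on every edge, with $\partial\varphi'=3\partial\psi$.
\item Let $\bm a_1,\bm a_2,\bm a_3$ be the vertices of a unit-side equilateral triangle centered at the origin, and set
\[
F=-\psi\,\bm a_1+\tfrac{\varphi'+\psi}{2}\,\bm a_2+\tfrac{\psi-\varphi'}{2}\,\bm a_3 .
\]
Then $F$ takes only values $\pm(\bm a_i-\bm a_j)\in S^1$ and satisfies $\partial F=3\bm a_2\,\partial\psi$.
\end{enumerate}
Unit length is thus recovered after the shift, by this linear encoding, rather than having to be preserved during it.
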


\begin{proof}
On every component other than the one containing $p$ and $q$, choose an
$R_3$-flow supplied by Theorem~\ref{thm:3-R3-S1-flow}; this is already an
$S^1$-flow. It therefore suffices to treat the case in which $H$ is
connected. Let $(D,\varphi)$ be a nowhere-zero $3$-flow of $H$.
By reversing edges with negative values, we may assume that
$\varphi(e)\in\{1,2\}$ for every edge $e$.  Then $D$ is strongly connected and hence, there exists a directed path $P$ from $p$ to $q$.  Define
\[
\varphi'(e)=
\begin{cases}
\varphi(e)-3, & \text{if }e\in E(P),\\
\varphi(e), & \text{if }e\notin E(P).
\end{cases}
\]
Then, 
$\partial_D\varphi'(v)=0$ for all $v\notin\{p,q\}$, while
$\partial_D\varphi'(p)=-3$ and $\partial_D\varphi'(q)=3$.
Since $\varphi(e)\in\{1,2\}$, we have
$\varphi'(e)\in\{-2,-1,1,2\}$ for every edge $e$. 

Let $H'$ be the spanning subgraph whose edges satisfy
$|\varphi'(e)|=1$. Reducing the boundary equations modulo $2$ shows that
$p$ and $q$ are the only odd-degree vertices of $H'$. Thus, $p$ and $q$ lie in the same component of $H'$. Take an Euler trail from
$q$ to $p$ in that component and an Euler tour in every other nontrivial
component. Orient each trail or tour according to
its traversal. Define $\psi\colon E(H)\to\{-1,0,1\}$ by setting $\psi (e)=0$ for $e\notin E(H')$, and, for $e\in E(H')$, setting $\psi(e)=1$ or $-1$ according as the traversal direction agrees or disagrees with $D$. Then $\partial_D\psi(p)=-1$, $\partial_D\psi(q)=1$, and $\partial_D\psi(v)=0$ for every other vertex $v$. Consequently,
$\partial_D\varphi'=3\partial_D\psi$.

Now define
\[
f_1=-\psi,\qquad
f_2=\frac{\varphi'+\psi}{2},\qquad
f_3=\frac{\psi-\varphi'}{2}.
\]
Since $\varphi'$ and $\psi$ have the same parity on every edge,
$f_1,f_2,f_3$ are integer-valued.

Moreover, for every edge $e$, exactly two of
$f_1(e),f_2(e),f_3(e)$ are nonzero, and these two values are opposites.
More precisely,
\begin{center}
\begin{tabular}{c|c}
$(\varphi'(e),\psi(e))$ & $(f_1(e),f_2(e),f_3(e))$ \\
\hline
$\psi(e)=\varphi'(e)=\pm1$
    & $(-\varphi'(e),\varphi'(e),0)$ \\
$\psi(e)=-\varphi'(e)=\pm1$
    & $(\varphi'(e),0,-\varphi'(e))$ \\
$\psi(e)=0,\ \varphi'(e)=\pm2$
    & $(0,\varphi'(e)/2,-\varphi'(e)/2)$
\end{tabular}
\end{center}

Consider an equilateral triangle in $\mathbb{R}^2$ with side length $1$ and
center at the origin, and let its vertices be $\bm a_1,\bm a_2,\bm a_3$. Then
$\bm a_1+\bm a_2+\bm a_3=\bm{0}$, $\|\bm a_i-\bm a_j\|=1$ for $i\neq j$, and
$\|\bm a_i\|=1/\sqrt{3}$.

For each edge $e$, define
$F(e)=f_1(e)\bm a_1+f_2(e)\bm a_2+f_3(e)\bm a_3$.
By the preceding table, exactly two of the three coefficients are nonzero
and are opposites of each other. Thus $F(e)$ is of the form
$\pm(\bm a_i-\bm a_j)$, and hence $\|F(e)\|=1$. Therefore, $F$ is an
$S^1$-preflow.

By $\partial\varphi'=3\partial\psi$ and the definitions of
$f_1,f_2,f_3$, we have
$\partial f_1=-\partial\psi$,
$\partial f_2=2\partial\psi$, and
$\partial f_3=-\partial\psi$. Therefore,
$\partial F=(-\bm a_1+2\bm a_2-\bm a_3)\partial\psi=3\bm a_2\partial\psi$ since $\bm a_1+\bm a_2+\bm a_3=\bm{0}$. It follows that
$\partial F(p)=-3\bm a_2$, $\partial F(q)=3\bm a_2$, and
$\partial F(v)=\bm{0}$ for every $v\in V(H)\setminus\{p,q\}$.

Since $\|3\bm a_2\|=\sqrt{3}$, the vector
$\mathbf{u}_0=\sqrt{3}\,\bm a_2$ is a unit vector in $\mathbb{R}^2$. Hence,
$\partial F(p)=-\sqrt{3}\,\mathbf{u}_0$ and
$\partial F(q)=\sqrt{3}\,\mathbf{u}_0$. Applying a common rotation of
$\mathbb{R}^2$ to all edge values of $F$ replaces $\mathbf{u}_0$ by the
prescribed unit vector $\mathbf{u}$. This completes the proof.
\end{proof}

Now we are ready to prove Theorem~\ref{thm:intro-2-sum-general}.

\begin{proof}[\bf Proof of Theorem~\ref{thm:intro-2-sum-general}] Let $u,v$ be the endvertices of $e$ in both $G_1$ and $G_2$.

\medskip \noindent
\ref{thm:intro-2-sum-general:i} Let $(D_1,\phi_1)$ be an $S^1$-flow of $G_1$, orient $e$ from $u$
to $v$, and write $\phi_1(e)=\bm a$. Choose $\bm b\in S^1$ so that
$\bm c:=\bm a+\bm b\in S^1$.
Let $D_2$ be an orientation of $G_2$  such that $e$  is oriented also from $u$ to $v$.  By Proposition~\ref{prop:basic}\ref{prop:basic:1}, $G_2$ has two $S^1$-flows $(D_2, \phi_2)$ and $(D_2, \phi_3)$ such that $\phi_2(e) =-\bm a$ and $\phi_3(e) = \bm b$.

Therefore, one can obtain an $S^1$-flow of $G-e$ by combining $(D_1, \phi_1)$ and $(D_2, \phi_2)$ and an $S^1$-flow of $G$ by  combining $(D_1, \phi_1)$ and $(D_2, \phi_3)$.
 
\medskip \noindent
\ref{thm:intro-2-sum-general:ii}  Let $G_1'$ be the graph obtained from $G_1$ by adding a  new edge  $f$ parallel to $e$.  Label the edge $e$ in $G_2$ as $f$. Then $G = (G_1'\oplus_f G_2)-f$.  Since $G_1/e= G_1'/\{e,f\}$ and $G_1/e$ admits a nowhere-zero $3$-flow,  by Theorem~\ref{thm:Z3-connected},  $G_1'$ also admits a nowhere-zero $3$-flow, and hence an $S^1$-flow by Theorem~\ref{thm:3-R3-S1-flow}. Note that both $G_1'$ and $G_2$ admit $S^1$-flows. By \ref{thm:intro-2-sum-general:i}, $G= (G_1'\oplus_fG_2)-f$ admits an $S^1$-flow.

\medskip \noindent
\ref{thm:intro-2-sum-general:iii} Since $G$ is the edge-disjoint union of $G_1$ and $G_2-e$, we can obtain an $S^1$-flow on $G$ by combining an $S^1$-flow on $G_1$ with an $S^1$-flow on $G_2-e$.

\medskip \noindent
\ref{thm:intro-2-sum-general:iv} If one of $G_1,G_2$ admits an $S^1$-flow, then \ref{thm:intro-2-sum-general:iv} follows from \ref{thm:intro-2-sum-general:ii}. 

Now 
assume that neither graph admits an $S^1$-flow; in particular, neither
admits a nowhere-zero $3$-flow. For each $i\in\{1,2\}$, since $G_i/e$ admits a nowhere-zero $3$-flow but $G_i$ does not, by Tutte's flow polynomial theorem  \cite{Tutte54}(see Lemma 2.7.1 (1) in  \cite{zhang1997integer}),  $G_i-e$ admits a nowhere-zero $3$-flow. 

For each $i\in\{1,2\}$, let $D_i$ be an orientation of $G_i$ such that $e$ is oriented from $u$ to $v$ in both $D_i$. Choose $\bm a_1,\bm a_2\in S^1$ with
$\bm a_1\mathbin{\cdot}\bm a_2=-5/6$, and set
\(
\bm c=-\sqrt3(\bm a_1+\bm a_2).
\)
Then $\lVert\bm c\rVert^2=3(2-5/3)=1$, so $\bm c\in S^1$.
Since $G_i$ is bridgeless, $u$ and $v$ lie in the same component of
$G_i-e$.
By Lemma~\ref{lem:sqrt3-preflow},  each $G_i -e$  has an $S^1$-preflow $(D_i, \phi_i)$ such that  $\partial \phi_i(x) =0$ if $x \not \in \{u,v\}$, 
$\partial \phi_i(u)=\sqrt{3}\bm a_i$, and $\partial \phi_i(v)=-\sqrt{3}\bm a_i$. Define $\phi: E(G) \to S^1$ as follows:
 \[
\phi(e')=
\begin{cases}
\phi_1(e'), & \text{if } e' \in E(G_1)\setminus\{e\},\\
\phi_2(e'), & \text{if } e' \in E(G_2)\setminus\{e\},\\
\bm c, & \text{if } e'=e.
\end{cases}
\]
Let $D$ be the orientation of $G$ whose restriction to each $G_i-e$ is
$D_i$ and in which $e$ is directed from $u$ to $v$. At $u$ we have
\[
\partial\phi(u)=\sqrt3\bm a_1+\sqrt3\bm a_2+\bm c=\bm0,
\]
and the equation at $v$ is its negative; all other boundary equations hold
by construction. Thus $(D,\phi)$ is an $S^1$-flow of $G$.
\end{proof}

\medskip 
Li et al.~\cite{LLW20} introduced the following bull-growth operation in the
study of $3$-flows, which is closely related to the $2$-sum. We adapt this
operation to the setting of $S^1$-flows.

\begin{definition}\label{def:bull-reduction}
\textup{(\cite{LLW20})}
Let $u$ and $v$ be adjacent $3$-vertices of a graph $G$, each having
three distinct neighbors, with
$N_G(u)=\{v,w,a\}$ and $N_G(v)=\{u,w,b\}$. The vertices $a$ and $b$
may coincide. Define
\[
G_1=
\begin{cases}
G-u-v+ab, & \text{if } a\neq b,\\
G-u-v, & \text{if } a=b.
\end{cases}
\]
Then $G_1$ is called the \emph{bull-reduction} of $G$, and $G$ is called the
\emph{bull-growth} of $G_1$ (see \Cref{fig:bull-reduction}). We write
$G=\mathcal{B}\uplus G_1$.
\end{definition}

\begin{figure}[htbp]
\centering

\begin{subfigure}{0.48\textwidth}
\centering
\begin{tikzpicture}[
    scale=0.8,
    vertex/.style={circle,fill=black,inner sep=1.5pt},
    label_style/.style={fill=none,draw=none,inner sep=1pt,font=\small},
    operation/.style={fill=white,draw=none,inner sep=2pt,font=\small},
    graphboundary/.style={thick},
    operationarrow/.style={thick,>=stealth}
]


\node[vertex] (u) at (-0.8,1.4) {};
\node[label_style, above=2pt of u] {$u$};

\node[vertex] (v) at (0.8,1.4) {};
\node[label_style, above=2pt of v] {$v$};

\node[vertex] (w) at (0,0) {};
\node[label_style, below=1pt of w] {$w$};

\node[vertex] (a) at (-0.8,0) {};
\node[label_style, below=1pt of a] {$a$};

\node[vertex] (b) at (0.8,0) {};
\node[label_style, below=1pt of b] {$b$};

\draw
(u)--(v)
(u)--(a)
(u)--(w)
(v)--(w)
(v)--(b);

\draw[graphboundary] (0,0) ellipse (1.6 and 1);

\node[label_style] at (0,-1.45) {$G$};


\draw[operationarrow,->]
(1.72,0.1)--(3.92,0.1)
node[midway,above=1pt,operation]{\scriptsize bull-reduction};

\draw[operationarrow,<-]
(1.72,-0.1)--(3.92,-0.1)
node[midway,below=1pt,operation]{\scriptsize bull-growth};


\begin{scope}[xshift=5.6cm,yshift=0cm]

\node[vertex] (a1) at (-0.8,0) {};
\node[label_style, below=1pt of a1] {$a$};

\node[vertex] (w1) at (0,0) {};
\node[label_style, below=1pt of w1] {$w$};

\node[vertex] (b1) at (0.8,0) {};
\node[label_style, below=1pt of b1] {$b$};

\draw (a1) to[bend left=35] (b1);

\draw[graphboundary] (0,0) ellipse (1.6 and 1);

\node[label_style] at (0,-1.45) {$G_1$};

\end{scope}

\end{tikzpicture}

\caption{$a\neq b$}
\label{fig:bull-a-ne-b}
\end{subfigure}
\hfill
\begin{subfigure}{0.48\textwidth}
\centering
\begin{tikzpicture}[
    scale=0.8,
    vertex/.style={circle,fill=black,inner sep=1.5pt},
    label_style/.style={fill=none,draw=none,inner sep=1pt,font=\small},
    operation/.style={fill=white,draw=none,inner sep=2pt,font=\small},
    graphboundary/.style={thick},
    operationarrow/.style={thick,>=stealth}
]


\node[vertex] (u) at (-0.8,1.4) {};
\node[label_style, above=2pt of u] {$u$};

\node[vertex] (v) at (0.8,1.4) {};
\node[label_style, above=2pt of v] {$v$};

\node[vertex] (w) at (-0.8,0) {};
\node[label_style, below=1pt of a] {$w$};

\node[vertex] (b) at (0.8,0) {};
\node[label_style, below=1pt of b] {$a(b)$};

\draw
(u)--(v)
(u)--(w)
(u)--(b)
(v)--(w)
(v)--(b);

\draw[graphboundary] (0,0) ellipse (1.6 and 1);

\node[label_style] at (0,-1.45) {$G$};


\draw[operationarrow,->]
(1.72,0.1)--(3.92,0.1)
node[midway,above=1pt,operation]{\scriptsize bull-reduction};

\draw[operationarrow,<-]
(1.72,-0.1)--(3.92,-0.1)
node[midway,below=1pt,operation]{\scriptsize bull-growth};


\begin{scope}[xshift=5.6cm,yshift=0cm]

\node[vertex] (a1) at (-0.8,0) {};
\node[label_style, below=1pt of a1] {$w$};

\node[vertex] (b1) at (0.8,0) {};
\node[label_style, below=1pt of b1] {$a(b)$};

\draw[graphboundary] (0,0) ellipse (1.6 and 1);

\node[label_style] at (0,-1.45) {$G_1$};

\end{scope}

\end{tikzpicture}

\caption{$a=b$}
\label{fig:bull-a-=-b}
\end{subfigure}

\caption{Bull-reduction and bull-growth.}
\label{fig:bull-reduction}
\end{figure}

The two cases in the definition of the bull-growth operation can be described in terms of edge sums as follows. Suppose first that $a\neq b$, and let $B_3$ be the graph with
\[
V(B_3)=\{a,b,u,v,w'\}
\quad\text{and}\quad
E(B_3)=\{uv,uw',w'v,ua,ab,bv\},
\]
where $w'$ is a new vertex. Thus, $B_3$ is the $2$-sum of a triangle and a $4$-cycle, and hence admits a nowhere-zero $3$-flow. The graph $G$ is obtained from
$\bigl(G_1\oplus_{ab}B_3\bigr)-ab$ by identifying $w'$ with $w$.
We will use the notation $B_3$ for this graph throughout the proof. 

If \(a=b\), let \(K_4\) be the complete graph on
\(\{a,u,v,w\}\). In this case,
\[
G=\bigl(K_4\oplus_{aw}(G_1+aw)\bigr)-aw.
\]

These descriptions, together with
Theorem~\ref{thm:intro-2-sum-general}, yield the following result.

\begin{theorem}
\label{thm:intro-bull-growth}
If \(G_1\) admits an \(S^1\)-flow, then every graph $G=\mathcal{B}\uplus G_1$ obtained from \(G_1\) by a bull-growth operation also admits an $S^1$-flow.
\end{theorem}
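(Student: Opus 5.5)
The plan is to treat the two cases of Definition~\ref{def:bull-reduction} separately. In each case I use the $2$-sum description given just before the statement and apply Theorem~\ref{thm:intro-2-sum-general}. The only extra bookkeeping is the identification of $w'$ with $w$ in the case $a\neq b$. Identifying two vertices preserves the boundary condition, since the boundary at the merged vertex is the sum of two zero boundaries. So any $S^1$-flow of $\bigl(G_1\oplus_{ab}B_3\bigr)-ab$ is also an $S^1$-flow of $G$.

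\emph{Case $a\neq b$.} The edge $ab$ lies in $G_1$ by definition, and $G_1$ admits an $S^1$-flow by hypothesis. The graph $B_3$ is the $2$-sum of the triangle $uvw'$ and the $4$-cycle $uabv$ along $uv$. It admits a nowhere-zero $3$-flow: put $1$ around the triangle and $1$ around the $4$-cycle with the directions chosen so that $uv$ receives value $2$. Hence $B_3$ admits an $S^1$-flow by Theorem~\ref{thm:3-R3-S1-flow}. Now apply Theorem~\ref{thm:intro-2-sum-general}\ref{thm:intro-2-sum-general:i} to $G_1\oplus_{ab}B_3$. This gives an $S^1$-flow of $\bigl(G_1\oplus_{ab}B_3\bigr)-ab$, and identifying $w'$ with $w$ yields an $S^1$-flow of $G$. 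The one check needed is that the two graphs meet only in $\{a,b\}$ and share only the edge $ab$. This holds because $u,v,w'$ are new vertices.

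\emph{Case $a=b$.} Here $G=\bigl(K_4\oplus_{aw}(G_1+aw)\bigr)-aw$. The graph $K_4$ admits an $S^1$-flow: it is cubic and admits a nowhere-zero $4$-flow but not a $3$-flow, so this is not immediate from Theorem~\ref{thm:3-R3-S1-flow}. Instead I would exhibit one directly. Send a unit vector around each of the triangles $auv$ and $uvw$ so that the values on the shared edge $uv$ add correctly, or more simply take an $S^1$-flow of the digon-contracted structure. A more robust route is as follows. The graph $K_4-aw$ is a $4$-cycle $a\,u\,w\,v$ with chord $uv$, which is the $2$-sum of two triangles along $uv$. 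Each triangle has an $S^1$-flow, by a constant unit vector around the cycle. Then Theorem~\ref{thm:intro-2-sum-general}\ref{thm:intro-2-sum-general:i} gives an $S^1$-flow of $K_4-aw$.

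Next, $G_1+aw$ admits an $S^1$-flow. To see this, view $G_1+aw$ as containing $G_1$ together with the extra edge $aw$, and use Theorem~\ref{thm:intro-2-sum-general}\ref{thm:intro-2-sum-general:iii}. Take $K_4-aw$ as one summand and $G_1$ as the other, where $G_1$ plays the role of $(G_1+aw)-aw$. Then $G=(K_4-aw)\cup G_1$ is an edge-disjoint union of two graphs admitting $S^1$-flows, glued at $a$ and $w$. In fact this shows directly that $G$ has an $S^1$-flow, by combining the two flows, which is exactly the argument of part \ref{thm:intro-2-sum-general:iii}.

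The main point needing care is the case $a=b$. There one must check that $K_4-aw$ (equivalently $K_4$, which does have an $S^1$-flow) supplies the needed flow, and that the union correctly recovers $G$ with $N_G(u)=\{v,w,a\}$ and $N_G(v)=\{u,w,a\}$. Once that is done, both cases are immediate consequences of Theorem~\ref{thm:intro-2-sum-general}.
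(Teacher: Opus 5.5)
Your argument is, in the end, the paper's proof. For $a\neq b$ you apply Theorem~\ref{thm:intro-2-sum-general}\ref{thm:intro-2-sum-general:i} to $G_1\oplus_{ab}B_3$ and then identify $w'$ with $w$. For $a=b$ the decisive observation is that $E(G)$ is the disjoint union of $E(G_1)$ and $E(K_4-aw)$, each of which carries an $S^1$-flow, so the two flows can be superposed. That argument is correct.

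However, the $a=b$ paragraph also asserts two false things, and you should delete them rather than route around them.

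First, $K_4$ does \emph{not} admit an $S^1$-flow. It is cubic and has no nowhere-zero $3$-flow, so the cubic equivalence in Theorem~\ref{thm:3-R3-S1-flow} excludes it directly; the paper records this in Lemma~\ref{lem:odd-wheel-minus-edge}\ref{lem:odd-wheel-minus-edge:ii}. Your proposed ``direct'' construction on $K_4$ would therefore fail. Also, $K_4-aw$ is not ``equivalent'' to $K_4$ for this purpose.

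Second, $G_1+aw$ need not admit an $S^1$-flow. Take $G_1=K_4-aw$, which is a legitimate instance of bull-growth with $a=b$; then $G_1+aw=K_4$. For the same reason, Theorem~\ref{thm:intro-2-sum-general}\ref{thm:intro-2-sum-general:iii} does not literally apply to $K_4\oplus_{aw}(G_1+aw)$. It would need $K_4$ or $G_1+aw$ to carry an $S^1$-flow, and its conclusion would concern $G+aw$ rather than $G$.

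None of this is needed. $K_4-aw$ is a $2$-sum of two triangles (indeed it has a nowhere-zero $3$-flow), and superposing its $S^1$-flow with one of $G_1$ finishes the case, exactly as in the paper.
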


\begin{proof}
Suppose first that \(a=b\).  Then $G$ can be decomposed into two edge-disjoint subgraphs:  \(K_4-aw\) and $G_1$, each of which admits an $S^1$-flow.  Thus $G$ admits an $S^1$-flow.

Now suppose that \(a\neq b\). The graph \(B_3\) admits a nowhere-zero
\(3\)-flow and hence an \(S^1\)-flow. It follows from
Theorem~\ref{thm:intro-2-sum-general}\ref{thm:intro-2-sum-general:i} that
\(
J:=\bigl(G_1\oplus_{ab}B_3\bigr)-ab
\)
admits an \(S^1\)-flow.
The graph \(G\) is obtained from \(J\) by identifying \(w'\) with \(w\) and thus an $S^1$-flow of $J$  induces an $S^1$-flow of $G$.
\end{proof}

\subsection{Contractible configurations for  \(S^1\)-flows}

Identifying contractible configurations is a powerful technique in the study of flows. It is well known that even wheels are  $3$-flow contractible, whereas odd wheels are not. In this subsection, we show that both odd and even wheels satisfy an analogous contraction property for \(S^1\)-flows.

Recall that the wheel \(W_k\) is obtained from a cycle \(C_k\) by adding a new vertex adjacent to every vertex of \(C_k\). The cycle \(C_k\) is called the \emph{rim}, and the new vertex is called the \emph{hub}. The vertices and edges of \(C_k\) are called the \emph{rim vertices} and \emph{rim edges}, respectively, while the edges incident with the hub are called the \emph{spokes}. Whenever \(W_k\) is mentioned, we assume that \(k\geq 3\). We write \(\Z_k\) for the cyclic group of order \(k\).

Our main result in this subsection shows that every wheel is, in an appropriate sense, \(S^1\)-flow-contractible.

\begin{theorem}
\label{thm:intro-wheel-contraction-general}
Let \(G\) be a \(2\)-connected graph containing a wheel \(W_n\) as a proper subgraph. If \(G/W_n\) admits a nowhere-zero \(3\)-flow, then \(G\) admits an \(S^1\)-flow.
\end{theorem}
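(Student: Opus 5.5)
We split by the parity of $n$. If $n$ is even, $W_n$ is an even wheel. Theorem~\ref{thm:Z3-connected} then says that $G$ admits a nowhere-zero $3$-flow because $G/W_n$ does. Hence $G$ admits an $S^1$-flow by Theorem~\ref{thm:3-R3-S1-flow}. So the real content is the case where $n$ is odd. There the plan is to imitate the proof of Theorem~\ref{thm:intro-2-sum-general}\ref{thm:intro-2-sum-general:iv}: delete one spoke, build $\sqrt3$-preflows with Lemma~\ref{lem:sqrt3-preflow}, and close them up with a single unit vector on the deleted spoke.

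For odd $n$, let $h$ be the hub, let $e=hx$ be a spoke, and let $y,z$ be the rim neighbours of $x$. In $W_n-e$, suppressing the $2$-vertex $x$ turns the path $yxz$ into an edge $yz$. Together with the remaining rim and spokes this gives an even wheel $W_{n-1}$. I would like to exploit this in $G$ itself. The natural graph to study is $H:=G-e$. The claim to aim for is that $H$ admits a nowhere-zero $3$-flow, or at least something close to one.

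To get this, contract $W_{n-1}$ inside the graph obtained from $H$ by treating $x$ as lying on the edge $yz$. Since $W_{n-1}$ is an even wheel, Theorem~\ref{thm:Z3-connected} reduces the question to the graph obtained from $G/W_n$ by splitting the contracted vertex. The point $x$ is split off together with its outside edges and joined to the contracted vertex by two parallel edges (the images of $xy,xz$). That digon is again contractible by Theorem~\ref{thm:Z3-connected}, which brings us back to $G/W_n$. Since $G/W_n$ has a nowhere-zero $3$-flow by hypothesis, this would give a nowhere-zero $3$-flow of $H=G-e$. This is the step I am least sure of: the bookkeeping when $x$ has edges leaving $W_n$ has to be done carefully, because a $2$-vertex is not itself $\mathbb{Z}_3$-contractible.

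Assuming $H$ has a nowhere-zero $3$-flow, I would finish as in Theorem~\ref{thm:intro-2-sum-general}\ref{thm:intro-2-sum-general:iv}. Since $G$ is $2$-connected, $h$ and $x$ lie in the same component of $H$. Lemma~\ref{lem:sqrt3-preflow} with $p=x$ and $q=h$ then gives an $S^1$-preflow $F$ on $H$ that is balanced except for $\partial F(x)=-\sqrt3\mathbf u$ and $\partial F(h)=\sqrt3\mathbf u$. A single unit vector on $e$ cannot cancel a boundary of length $\sqrt3$. So I would instead decompose $H$ into two edge-disjoint parts, each of which has an $(x,h)$-path, and apply Lemma~\ref{lem:sqrt3-preflow} to each part with unit vectors $\bm a_1,\bm a_2$ satisfying $\bm a_1\cdot\bm a_2=-5/6$. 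Setting $\phi(e)=-\sqrt3(\bm a_1+\bm a_2)$, which is a unit vector as before, then balances the boundaries at $x$ and $h$. The decomposition I have in mind is one part using the spokes and the other using the rim together with the outside of the wheel.

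The main obstacle is producing two edge-disjoint pieces of $H$, each admitting a nowhere-zero $3$-flow and each containing an $(x,h)$-path. If this decomposition fails, the fallback is a direct construction. Take an $R_3$-flow on $E(G)\setminus E(W_n)$, coming from $G/W_n$. It leaves a boundary demand at each rim vertex that lies in the lattice spanned by $R_3$. One would then assign explicit $S^1$-values to the spokes and rim of the odd wheel that meet these demands, using a rotation-by-angle parametrisation around the rim and choosing the free angle so that the odd cycle closes up.
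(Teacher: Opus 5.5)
Your even case is the paper's argument. The odd case has a genuine gap. Everything rests on splitting $E(G-hx)$ into two edge-disjoint subgraphs, each with a nowhere-zero $3$-flow and each containing an $(x,h)$-path, and such a split need not exist for any spoke.

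The split you suggest fails at once, because the spokes form a star, and a star has no nowhere-zero $3$-flow. More generally, each piece is bridgeless, so $x$ and $h$ need degree at least $2$ in both pieces. If $x$ has no neighbour outside $W_n$, then $d_{G-hx}(x)=2$, which makes this impossible.

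Here is a concrete obstruction. Take $W_5$ with hub $h$ and rim $v_0v_1v_2v_3v_4$, and add two adjacent vertices $c,d$, each joined to $v_0$ and $v_2$. Then $G$ is $2$-connected, and $G/W_5\cong K_4/v_0v_2$ has a nowhere-zero $3$-flow. On the other hand, $G$ has none. In a nowhere-zero $\mathbb Z_3$-flow of $G$, let $\gamma$ be the net flow carried from $v_0$ to $v_2$ by the five edges at $c,d$. If $\gamma\neq 0$, these edges plus a new edge $v_0v_2$ of value $\gamma$ give such a flow on $K_4$. If $\gamma=0$, the remaining edges give one on $W_5$.
\begin{itemize}
\item Spokes to $v_1,v_3,v_4$ fail by the degree count.
\item For $x=v_0$, each of the $3$-vertices $v_1,v_3,v_4,c,d$ must keep all three of its edges in one piece, and $h$ must split $2+2$. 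This forces $v_0v_4$ and $v_0v_1$ into different pieces and $v_0c,v_0d$ into the same piece. So $v_0$ has degree $1$ in one piece.
\item The case $x=v_2$ is symmetric.
\end{itemize}
Your preliminary claim that $G-hx$ has a nowhere-zero $3$-flow is actually true in the relevant case, by Lemma~\ref{lem:wn} together with Lemma~\ref{lem:odd-wheel-minus-edge}\ref{lem:odd-wheel-minus-edge:i}. Your contraction argument for it does not work, though. When $x$ has edges leaving $W_n$ it cannot be suppressed, and $W_n-hx$ is not a configuration covered by Theorem~\ref{thm:Z3-connected}.

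The missing idea is that the odd wheel itself can serve as the second piece, even though it has no nowhere-zero $3$-flow. Lemma~\ref{lem:odd-wheel-any-absorber} gives an $S^1$-preflow on $W_n$ with boundary $-\bm b$ at $p$ and $\bm b$ at $q$, for any two distinct vertices $p,q$ and any $\lVert\bm b\rVert=\sqrt3$. With this lemma no edge needs to be deleted. The argument runs as follows:
\begin{itemize}
\item Handle a non-induced $W_n$ first: extra chords give digons or an even wheel. Your proposal omits this case.
\item Assume $G$ has no nowhere-zero $3$-flow. Then Lemma~\ref{lem:wn} gives a nowhere-zero $3$-flow on $G-E(W_n)$.
\item By $2$-connectivity, some $p\neq q$ in $V(W_n)$ lie in one component of $G-E(W_n)$.
\item Lemma~\ref{lem:sqrt3-preflow} gives a two-terminal preflow there with boundary $\pm\bm b$, and the wheel absorbs it. In the example above, $p=v_0$ and $q=v_2$.
\end{itemize}

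Your fallback points in this direction, but it is not yet an argument. The demands left on $W_n$ by an arbitrary $R_3$-flow coming from $G/W_n$ can all vanish; in the example they do for suitable choices at $c$ and $d$. Then nothing can be placed on $W_n$, since an odd wheel has no $S^1$-flow. You must choose the outside assignment to have exactly two terminals, with demands $\pm\bm b$ of length $\sqrt3$. You then have to prove that every such demand pair can be absorbed by the wheel. That is exactly the explicit rotation-by-$\pi/3$ construction in Lemma~\ref{lem:odd-wheel-any-absorber}.
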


The proof of Theorem~\ref{thm:intro-wheel-contraction-general} requires the following result for odd wheels.

\begin{lemma}[\cite{Xu2004}]
\label{lem:extended-z3-boundary}
Let \(n\geq 3\) be odd, and let \(W_n\) be a wheel with hub \(h\) and rim
vertices \(v_0,v_1,\ldots,v_{n-1}\). Let
\(\beta:V(W_n)\to\Z_3\) be a function satisfying
\(\sum_{v\in V(W_n)}\beta(v)=0\).
If \(\beta\) is not identically zero, then \(W_n\) has a preflow
\((D,\phi)\) with \(\phi:E(W_n)\to\mathbb Z_3\setminus\{0\}\) such that
\(\partial\phi(v)=\beta(v)\) for every \(v\in V(W_n)\).
\end{lemma}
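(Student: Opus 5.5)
Orient every rim edge as $v_i\to v_{i+1}$ (indices modulo $n$) and every spoke as $h\to v_i$. Write $x_i=\phi(v_iv_{i+1})$ and $s_i=\phi(hv_i)$, with values in $\Z_3$. The plan is to choose the rim values $x_i\in\{1,2\}$ first and let the boundary conditions at the rim vertices determine the spokes. Then the whole problem becomes a simple constraint-propagation problem around the cycle.

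\textbf{Step 1: reduce to a condition on the rim values.} The boundary condition at $v_i$ reads $x_i-x_{i-1}-s_i=\beta(v_i)$. We therefore set $s_i:=x_i-x_{i-1}-\beta(v_i)$, which satisfies every rim equation by construction. The hub boundary is then
\[
\sum_i s_i=\sum_i(x_i-x_{i-1})-\sum_i\beta(v_i)=-\sum_i\beta(v_i)=\beta(h),
\]
using the hypothesis $\sum_{v}\beta(v)=0$. So the hub equation holds automatically. The only remaining requirement is that every spoke value is nonzero, that is,
\[
x_i-x_{i-1}\neq\beta(v_i)\quad\text{for all } i,
\]
with $x_i\in\{1,2\}$. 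Since $x_i-x_{i-1}\in\{0,1,2\}$, this condition takes three forms:
\begin{itemize}
\item if $\beta(v_i)=0$, it says $x_{i-1}\ne x_i$;
\item if $\beta(v_i)=1$, it forbids only the pair $(x_{i-1},x_i)=(1,2)$;
\item if $\beta(v_i)=2$, it forbids only the pair $(2,1)$.
\end{itemize}

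\textbf{Step 2: use that $\beta$ is nonzero on the rim.} If $\beta(v_i)=0$ for all $i$, then $\beta(h)=-\sum_i\beta(v_i)=0$, so $\beta\equiv0$, which is excluded. Hence some rim vertex, say $v_0$, has $\beta(v_0)\ne0$. This is exactly where oddness of $n$ matters. If $\beta$ vanished on the whole rim, the constraints would ask for a proper $2$-colouring of the odd cycle $C_n$, which is impossible.

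\textbf{Step 3: propagate around the cycle.} Suppose $\beta(v_0)=1$; the case $\beta(v_0)=2$ is symmetric, with $x_0=2$ in place of $x_0=1$. Set $x_0=1$. Then define $x_1,x_2,\dots,x_{n-1}$ one at a time. Each constraint at $v_i$ with $1\le i\le n-1$ excludes at most one successor value for a given $x_{i-1}$, so an admissible $x_i\in\{1,2\}$ always exists. It remains to check the closing constraint at $v_0$, which involves the pair $(x_{n-1},x_0)$. That constraint forbids only the pair $(1,2)$, and we have $x_0=1$, so it holds whatever $x_{n-1}$ is.

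All constraints are now satisfied, so every $s_i$ is nonzero and $\phi$ is the required nowhere-zero $\Z_3$-preflow with $\partial\phi=\beta$. The only point needing care is the closing constraint in Step~3, and choosing $x_0$ in advance to match the single forbidden pair at $v_0$ disposes of it. That closing constraint is the main obstacle, and it is precisely where $\beta\not\equiv0$ is used.
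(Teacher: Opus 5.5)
Your proof is correct. It takes a different route from the paper only in the sense that the paper gives no proof: it cites Xu's dissertation and uses the lemma as a black box in Lemma~\ref{lem:wn}.

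Your reduction is clean and complete.
\begin{itemize}
\item Fixing nonzero rim values and solving each rim equation for its spoke leaves only the local conditions $x_i-x_{i-1}\neq\beta(v_i)$.
\item The hub equation then holds automatically, because $\sum_v\beta(v)=0$.
\item The greedy step succeeds because $x_i\mapsto x_i-x_{i-1}$ is injective on $\{1,2\}$, so at most one choice of $x_i$ is excluded.
\item The closing condition at $v_0$ is handled by your choice $x_0=\beta(v_0)$: then $x_0-x_{n-1}=\beta(v_0)-x_{n-1}\neq\beta(v_0)$, since $x_{n-1}\neq 0$.
\end{itemize}

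Compared with the citation, your argument keeps the paper self-contained at the cost of a few lines. It also shows that oddness of $n$ is never used: the same construction works for every wheel whenever $\beta\not\equiv 0$. So your remark in Step~2 that this is ``exactly where oddness of $n$ matters'' should be read only as explaining why the hypothesis $\beta\not\equiv 0$ cannot be dropped for odd $n$. There, $\beta\equiv 0$ would require a proper $2$-colouring of an odd cycle. It is not a step your argument relies on.
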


Tutte~\cite{Tutte54} proved that a graph admits a nowhere-zero \(k\)-flow if and only if it admits a nowhere-zero \(\Z_k\)-flow. Combining this equivalence with Lemma~\ref{lem:extended-z3-boundary} gives the following consequence.

\begin{lemma}
\label{lem:wn}
Let \(G\) contain an induced odd wheel \(W_n\) as a subgraph. Suppose that \(G/W_n\) admits a nowhere-zero \(\Z_3\)-flow, whereas \(G\) does not. Then \(G-E(W_n)\) admits a nowhere-zero \(3\)-flow.
\end{lemma}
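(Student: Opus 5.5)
Let $w$ denote the vertex of $G/W_n$ obtained by contracting $W_n$. Let $\phi_0:E(G/W_n)\to\Z_3\setminus\{0\}$ be a nowhere-zero $\Z_3$-flow; by Tutte's equivalence it exists. The edges of $G-E(W_n)$ are exactly the edges of $G/W_n$, since $W_n$ is induced and so no edge of $G$ outside $E(W_n)$ joins two vertices of $W_n$. Hence $\phi_0$ is a nowhere-zero $\Z_3$-preflow on $G-E(W_n)$, viewed as a graph on $V(G)$. The flow condition holds at every vertex outside $W_n$. At each $v\in V(W_n)$ it has some boundary $\beta'(v)\in\Z_3$, and $\sum_{v\in V(W_n)}\beta'(v)=\partial\phi_0(w)=0$.

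The goal is to extend $\phi_0$ across $E(W_n)$ with values in $\Z_3\setminus\{0\}$. The prescribed boundary on $W_n$ is $\beta=-\beta'$, so that the combined boundary at every vertex of $W_n$ is $0$.

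Lemma~\ref{lem:extended-z3-boundary} gives such an extension whenever $\beta\not\equiv 0$. In that case $\phi_0$ together with the preflow on $W_n$ is a nowhere-zero $\Z_3$-flow of $G$. Summing boundaries, every vertex has boundary zero, since $W_n$ is edge-disjoint from the rest. Thus $G$ would admit a nowhere-zero $3$-flow.

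Now suppose $G$ has no nowhere-zero $\Z_3$-flow. Then every choice of $\phi_0$ must give $\beta\equiv 0$. In particular this happens for the $\phi_0$ we fixed, so $\phi_0$ has zero boundary at every vertex of $W_n$. It is therefore a nowhere-zero $\Z_3$-flow of $G-E(W_n)$, and Tutte's theorem converts it into a nowhere-zero $3$-flow of $G-E(W_n)$.

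The only point needing care is that Lemma~\ref{lem:extended-z3-boundary} requires $n$ odd and $\beta$ summing to zero; both hold here. One must also be careful about orientations. The orientation of $G-E(W_n)$ is inherited from $G/W_n$, and the boundary at $w$ equals the sum of the boundaries over $V(W_n)$, because edges inside $W_n$ are absent. With that checked, the argument goes through without further obstacles. Inducedness is exactly what guarantees that contraction deletes only the edges of $W_n$ as loops, so it is the hypothesis to check explicitly.
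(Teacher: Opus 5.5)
Your proof is correct and follows essentially the same route as the paper: you lift the $\mathbb{Z}_3$-flow of $G/W_n$ to $G-E(W_n)$, and a nonzero boundary on $V(W_n)$ would let Lemma~\ref{lem:extended-z3-boundary}, applied to its negative, complete the lift to a nowhere-zero $\mathbb{Z}_3$-flow of $G$, so the boundary must vanish and Tutte's equivalence finishes. One small remark: Tutte's equivalence is not needed to obtain $\phi_0$, since the hypothesis already supplies a nowhere-zero $\mathbb{Z}_3$-flow of $G/W_n$, and in the nonzero case the contradiction is directly with the assumption that $G$ has no nowhere-zero $\mathbb{Z}_3$-flow.
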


\begin{proof}
Let \(\psi\) be a nowhere-zero \(\Z_3\)-flow of \(G/W_n\), and lift its orientations and values to the edges of \(G-E(W_n)\). For each \(v\in V(W_n)\), let \(\alpha(v)\) denote the boundary of this lifted assignment at \(v\). Since \(\psi\) satisfies the flow equation at the vertex obtained by contracting \(W_n\), we have \(\sum_{v\in V(W_n)}\alpha(v)=0\).

Suppose that \(\alpha\) is not identically zero. Applying Lemma~\ref{lem:extended-z3-boundary} to the boundary function \(-\alpha\), we obtain a preflow \(\phi\) of \(W_n\) satisfying \(\partial\phi(v)=-\alpha(v)\) for every \(v\in V(W_n)\). The assignments \(\psi\) and \(\phi\) then combine to form a nowhere-zero \(\Z_3\)-flow of \(G\), contrary to the hypothesis.

Therefore, \(\alpha\) is identically zero. Hence the lifted assignment is itself a nowhere-zero \(\Z_3\)-flow of \(G-E(W_n)\). Tutte's equivalence now implies that \(G-E(W_n)\) admits a nowhere-zero \(3\)-flow.
\end{proof}

Although an odd wheel does not admit a nowhere-zero \(3\)-flow, the following lemma shows that it admits sufficiently flexible \(S^1\)-preflows. This result is analogous to Lemma~\ref{lem:sqrt3-preflow}.

\begin{lemma}
\label{lem:odd-wheel-any-absorber}
Let \(W_n\) be an odd wheel with \(n\geq 3\). For any two distinct vertices
\(p,q\in V(W_n)\) and any vector \(\bm b\in\mathbb R^2\) with
\(\lVert\bm b\rVert=\sqrt{3}\), there exists an \(S^1\)-preflow \(F\) on
\(W_n\) such that \(\partial F(p)=-\bm b\), \(\partial F(q)=\bm b\), and
\(\partial F(v)=\bm 0\) for every \(v\in V(W_n)\setminus\{p,q\}\).
\end{lemma}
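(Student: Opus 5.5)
The plan is to build $F$ as a preflow with values in the six vectors $\pm(\bm a_i-\bm a_j)$, where $\bm a_1,\bm a_2,\bm a_3$ are the vertices of the unit-side equilateral triangle centred at the origin, exactly as at the end of the proof of Lemma~\ref{lem:sqrt3-preflow}. That proof shows that such values are unit vectors, and that $3\bm a_2=(\bm a_2-\bm a_1)+(\bm a_2-\bm a_3)$ has norm $\sqrt3$. So it suffices to find an assignment $F:E(W_n)\to\{\pm(\bm a_i-\bm a_j)\}$ with $\partial F(v)=\bm 0$ for $v\ne p,q$, $\partial F(p)=-3\bm a_2$ and $\partial F(q)=3\bm a_2$. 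A common orthogonal map (Proposition~\ref{prop:basic}\ref{prop:basic:1}) then sends $3\bm a_2$ to the prescribed $\bm b$, since both have norm $\sqrt3$.

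We cannot simply copy the proof of Lemma~\ref{lem:sqrt3-preflow}. That proof starts from an integer preflow with values in $\{\pm1,\pm2\}$ and boundary $\mp3$ at $p,q$. Reduced modulo $3$, such a preflow would be a nowhere-zero $\Z_3$-flow of $W_n$, which does not exist for odd $n$. So the $A_2$-root-valued preflow must be constructed directly. Equivalently, we need three integer functions $f_1,f_2,f_3$ such that on each edge exactly two are nonzero and they are opposite, with $\partial f_1=\partial f_3=-\partial f_2/2$ equal to $\pm1$ at $q,p$ and $0$ elsewhere. We do not require $f_2-f_3$ to be nowhere-zero, and this is what escapes the $\Z_3$ obstruction.

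By the rotational symmetry of the wheel and by swapping the roles of $p$ and $q$ (replace $F$ by $-F$), it suffices to treat three cases: $p$ is the hub and $q$ a rim vertex; $p,q$ are adjacent rim vertices; $p,q$ are nonadjacent rim vertices, determined up to the distance along the rim.

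\textbf{Base case $n=3$.} Here $W_3=K_4$, and all pairs of vertices are equivalent. We find the assignment by hand. First choose a Hamilton-type route, for example a triangle through $p$ and $q$ carrying $\bm a_2-\bm a_1$ from $p$ to $q$. Then choose a second edge-disjoint route carrying $\bm a_2-\bm a_3$ from $p$ to $q$. Finally, the remaining edges must be balanced using the identity $(\bm a_1-\bm a_2)+(\bm a_2-\bm a_3)+(\bm a_3-\bm a_1)=\bm0$ around the $3$-vertices. The bookkeeping is to check that each of the six edges receives exactly one root.

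\textbf{Inductive step $n\to n+2$.} Pick two consecutive rim vertices $x,y$ that are far from $p,q$; this is possible once $n+2\ge5$, after a few extra small cases are checked directly. Suppose $W_{n+2}$ is obtained from $W_n$ by subdividing a rim edge $e=st$ with $x,y$ and adding the spokes $hx,hy$. Given the assignment on $W_n$, let $\bm r$ be the root on $st$. Put $\bm r$ on $sx$ and on $yt$, and choose roots $\bm\rho$ on $xy$ and $\bm\sigma,\bm\tau$ on $hx,hy$. These must satisfy the balance at $x$, namely $\bm r-\bm\rho=\pm\bm\sigma$, and the matching conditions at $y$ and $h$. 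Since any two roots at $60^\circ$ differ by a root, the choice $\bm\rho=\bm r$ rotated by $60^\circ$ works, with $\bm\sigma=\bm\tau$ equal to the difference, oriented in at $x$ and out at $y$. Then the two spokes cancel at $h$ and the boundaries at $s,t,h$ are unchanged. If $p$ or $q$ must be one of the new vertices, we instead insert the pair elsewhere and relabel, using the dihedral symmetry.

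The main obstacle is the base case together with the small cases of the case split: $n=3$, and $n=5$ when $p,q$ are at rim distance $2$. There the existence of a consistent root assignment is a finite but genuine search. I would do it first by writing $\partial F=\bm0$ as integer equations for $(f_1,f_2,f_3)$ on each edge and exhibiting explicit solutions.
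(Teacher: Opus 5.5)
\textbf{There is a fatal gap: restricting to a single hexagon.} For odd $n$ there is no assignment $F\colon E(W_n)\to\{\pm(\bm a_i-\bm a_j)\}$ with $\partial F(p)=-3\bm a_2$, $\partial F(q)=3\bm a_2$ and zero boundary elsewhere. So your hand search on $K_4=W_3$, and on the small $n=5$ cases, cannot succeed.

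Your own reformulation shows why. Suppose that on an edge exactly two of $f_1(e),f_2(e),f_3(e)$ are nonzero and they are opposite. Then $g(e):=f_2(e)-f_3(e)\in\{\pm1,\pm2\}$; check the three cases $f_1(e)=0$, $f_2(e)=0$, $f_3(e)=0$. So the nowhere-zero property of $f_2-f_3$ is automatic, not something you can drop. Your boundary conditions $\partial f_1=\partial f_3=-\tfrac12\partial f_2$ then give $\partial g=\tfrac32\partial f_2$. This equals $\pm3$ at $p,q$ and $0$ elsewhere. Hence $g$ is exactly the $\{\pm1,\pm2\}$-valued integer preflow with boundary $\mp3$ at $p,q$ that you correctly ruled out: $g$ modulo $3$ would be a nowhere-zero $\Z_3$-flow of an odd wheel.

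Invariantly: identify the hexagon with the six units of the Eisenstein integers. Reduction modulo the prime of norm $3$ sends every unit to $\pm1\in\Z_3$, and every lattice vector of norm $\sqrt3$, such as $3\bm a_2$, to $0$. Your step $n\to n+2$ is correct, but it stays inside the hexagon and so has no valid base case to start from.

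\textbf{The missing idea is to use unit vectors from two different hexagons.} The paper writes $\bm b=\bm a-\bm c$ with $\lVert\bm a\rVert=\sqrt3$ and $\bm c\in S^1$. It then writes $\bm a=\bm d+\bm dR^{-1}$ with $\bm d\in S^1$ and $R$ the rotation through $\pi/3$. The angle between $\bm c$ and $\bm d$ is not a multiple of $\pi/3$, and by the obstruction above it cannot be.

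The construction runs as follows. Rim values are propagated by $R^{\pm1}$, starting from $\bm c$ on one arc between $p$ and $q$ and from $\bm d$ on the other arc. When $p$ is the hub, they start from $\bm d$ and go all the way round, with $\bm c$ placed on the spoke at $q$. Every other spoke receives the unit vector $r_i-r_{i-1}$. The oddness of $n$ is what allows the signs of the rotations to be chosen so that the arcs close up with the right end values. This yields boundary $-\bm b$ at $p$ and $\bm b$ at $q$ directly, for all odd $n$, with no induction.

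Your $n\to n+2$ step works for an arbitrary unit value on the subdivided rim edge. So a two-hexagon assignment on $K_4$ (for example, the $n=3$ instance of this construction) would also serve as a valid base case for your induction.
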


\begin{proof}
Let
\[
R=
\begin{pmatrix}
1/2 & \sqrt3/2\\
-\sqrt3/2 & 1/2
\end{pmatrix}
\]
be the rotation through angle $\pi/3$. Then $R$ is orthogonal, $R^3=-I$, and
for every $\bm x\in S^1$, we have
$\lVert \bm x-\bm xR\rVert=\lVert \bm x-\bm xR^{-1}\rVert=1$.

Choose $\bm a\in\mathbb R^2$ such that $\lVert\bm a\rVert=\sqrt3$ and
$\lVert\bm a-\bm b\rVert=1$. Such a vector exists because there is a
triangle with side lengths $\sqrt3,\sqrt3,1$. Set
$\bm c=\bm a-\bm b$, so $\bm c\in S^1$. Since $R^{-1}$ is a rotation through angle $-\pi/3$, the vector
$\bm d+\bm dR^{-1}$ is obtained from $\bm d$ by rotating it through
angle $-\pi/6$ and multiplying its length by $\sqrt3$. Therefore, for the
given vector $\bm a$ with $\lVert\bm a\rVert=\sqrt3$, there is a unique
$\bm d\in S^1$ such that
$\bm a=\bm d+\bm dR^{-1}$.  

Let $h$ be the hub of $W_n$, and label its rim vertices cyclically as
$v_0,v_1,\ldots,v_{n-1}$, where all indices are taken modulo $n$.
Orient each rim edge $v_iv_{i+1}$ from $v_i$ to $v_{i+1}$ and each spoke
$hv_i$ away from $h$. 
For an edge assignment $F$ under this orientation,
write $r_i=F(v_iv_{i+1})$ and $s_i=F(hv_i)$. Then
$\partial F(v_i)=r_i-r_{i-1}-s_i$.

\medskip
\noindent\textbf{Case 1: Both $p$ and $q$ are rim vertices.}

Without loss of generality, assume that $p=v_0$ and $q=v_t$, where $t$ is odd and $1\le t\le n-2$.
If $t=1$, the sequence $\varepsilon_1,\ldots,\varepsilon_{t-1}$ below is
understood to be empty.

Since $t-1$ is even, choose
$\varepsilon_1,\ldots,\varepsilon_{t-1}\in\{1,-1\}$ such that
$\sum_{i=1}^{t-1}\varepsilon_i=0$. Since $n-t-1$ is odd, choose
$\eta_{t+1},\ldots,\eta_{n-1}\in\{1,-1\}$ such that
$\sum_{i=t+1}^{n-1}\eta_i=1$.
Define the values on the rim edges by
\[
r_0=\bm c,\qquad
r_i=r_{i-1}R^{\varepsilon_i}
\quad (1\le i\le t-1),
\]
and
\[
r_t=\bm dR^{-1},\qquad
r_i=r_{i-1}R^{\eta_i}
\quad (t+1\le i\le n-1).
\]
Then $r_{t-1}=\bm c$ and $r_{n-1}=\bm d$.
Define the values on the spokes by
\[
s_0=\bm dR^{-1},\qquad
s_t=-\bm d,\qquad
s_i=r_i-r_{i-1}\quad (i\notin\{0,t\}).
\]
For $i\notin\{0,t\}$, we have $r_i=r_{i-1}R^{\pm1}$, and hence
$\lVert s_i\rVert=\lVert r_i-r_{i-1}\rVert=1$. Thus all rim edges and
spokes receive values in $S^1$.

For every $i\notin\{0,t\}$, we have
$\partial F(v_i)=r_i-r_{i-1}-s_i=\bm0$. At $v_0$, we have
\[
\partial F(v_0)
=r_0-r_{n-1}-s_0
=\bm c-\bm d-\bm dR^{-1}
=\bm c-\bm a
=-\bm b,
\]
whereas at $v_t$,
\[
\partial F(v_t)
=r_t-r_{t-1}-s_t
=\bm dR^{-1}-\bm c+\bm d
=\bm a-\bm c
=\bm b.
\]
Finally, since the sum of the boundaries over all vertices is zero, we have
$\partial F(h)=\bm0$. Therefore, $F$ is a desired $S^1$-preflow.

\medskip
\noindent\textbf{Case 2: One of $p$ and $q$ is the hub.}

Without loss of generality, 
assume that $p=h$ and $q=v_0$. Since $n-1$ is even, choose
$\varepsilon_1,\ldots,\varepsilon_{n-1}\in\{1,-1\}$ such that
$\sum_{i=1}^{n-1}\varepsilon_i=2$.

Define the rim values by
\[
r_0=\bm d,\qquad
r_i=r_{i-1}R^{\varepsilon_i}
\quad (1\le i\le n-1).
\]
Then $r_{n-1}=\bm dR^2=-\bm dR^{-1}$, since $R^3=-I$.

Define the spoke values by $s_0=\bm c$ and
$s_i=r_i-r_{i-1}$ for $1\le i\le n-1$. As in Case~1, all these values
belong to $S^1$, and $\partial F(v_i)=\bm0$ for $1\le i\le n-1$.

At $v_0$, we have
\[
\partial F(v_0)
=r_0-r_{n-1}-s_0
=\bm d+\bm dR^{-1}-\bm c
=\bm a-\bm c
=\bm b.
\]
It follows that $\partial F(h)=-\sum_{i=0}^{n-1}\partial F(v_i)=-\bm b$. Hence $F$ is again a desired
$S^1$-preflow.
\end{proof}

We are now ready to prove
Theorem~\ref{thm:intro-wheel-contraction-general}.

\begin{proof}[\bf Proof of Theorem~\ref{thm:intro-wheel-contraction-general}]

If $n$ is even, by Theorem~\ref{thm:Z3-connected},  $G$ admits a nowhere-zero $3$-flow and thus an $S^1$-flow by  Theorem~\ref{thm:3-R3-S1-flow}. Now  we assume that $n$ is odd.

 Let $H=G[V(W_n)]$ be the subgraph of \(G\) induced by the vertices of \(W_n\). If $W_n$   is not an induced subgraph of $G$, then any additional edge with both
ends in $V(W_n)$ either forms a digon with an edge of $W_n$ or, if it is a chord
of the rim, together with the corresponding spokes forms an even wheel. Thus $G/H$ can be obtained by successively contracting digons or by contracting an even wheel and then successively contracting digons.  Since  $G/H$ admits a nowhere-zero $3$-flow by the hypothesis,  by Theorem~\ref{thm:Z3-connected},  $G$ admits
a nowhere-zero $3$-flow and hence, by Theorem~\ref{thm:3-R3-S1-flow}, an
$S^1$-flow.

 In the following, we assume that $W_n$ is an induced subgraph of $G$ and $n$ is odd. We also assume that \(G\) does not admit a nowhere-zero \(3\)-flow,
since otherwise the result again follows immediately from
Theorem~\ref{thm:3-R3-S1-flow}.
By hypothesis, \(G/W_n\) admits a nowhere-zero \(3\)-flow. Therefore,
Lemma~\ref{lem:wn} implies that $G-E(W_n)$ admits a nowhere-zero \(3\)-flow.

Because $G$ is $2$-connected and $W_n$ is a proper induced subgraph,
there are two distinct vertices $p,q\in V(W_n)$ that belong to the same
component of $G-E(W_n)$. Indeed, every component of $G-V(W_n)$ has at
least two neighbors in $W_n$; otherwise, its unique neighbor in $W_n$
would be a cut vertex of $G$.

By Lemma~\ref{lem:sqrt3-preflow}, there exist a vector
$\bm b\in\mathbb R^2$ with $\lVert\bm b\rVert=\sqrt{3}$ and an
$S^1$-preflow $F_1$ on $G-E(W_n)$ such that
$\partial F_1(p)=\bm b$ and $\partial F_1(q)=-\bm b$, while
$\partial F_1(v)=\bm 0$ for every
$v\in V(G)\setminus\{p,q\}$.

Applying Lemma~\ref{lem:odd-wheel-any-absorber} to $W_n$, with the same
vertices $p,q$ and the same vector $\bm b$, gives an $S^1$-preflow
$F_2$ on $W_n$ such that
$\partial F_2(p)=-\bm b$ and $\partial F_2(q)=\bm b$, while
$\partial F_2(v)=\bm 0$ for every
$v\in V(W_n)\setminus\{p,q\}$.

The edge sets of $G-E(W_n)$ and $W_n$ partition $E(G)$, so $F_1$
and $F_2$ combine to give an $S^1$-preflow $F$ on $G$. Since
$\partial F_1(p)+\partial F_2(p)=\bm0$ and
$\partial F_1(q)+\partial F_2(q)=\bm0$, while the boundary is zero at
every other vertex, $F$ is an $S^1$-flow of $G$.
\end{proof}

\section{\(S^1\)-flows in triangularly connected graphs}

In this section, we prove
Theorem~\ref{thm:s1-characterization}\ref{thm:s1-characterization:triangular}.
Fan et al.~\cite{Fan2008} characterized triangularly connected graphs
that do not admit a nowhere-zero \(3\)-flow. Their characterization is
recursive and is based on odd wheels and \(2\)-sums.

\begin{theorem}[Fan et al.~\cite{Fan2008}]
\label{thm:fan}
Let \(G\) be a triangularly connected graph with \(|V(G)|\geq 3\).
Then \(G\) does not admit a nowhere-zero \(3\)-flow if and only if either
\(G\) is an odd wheel, or
\(
G=G_1\oplus_2 W,
\)
where \(W\) is an odd wheel and \(G_1\) is a triangularly connected graph
that does not admit a nowhere-zero \(3\)-flow.
\end{theorem}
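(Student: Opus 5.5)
The plan is to prove both directions separately, working throughout with $\mathbb Z_3$-flows (equivalent to nowhere-zero $3$-flows by Tutte's theorem) and using Theorem~\ref{thm:Z3-connected} as the main contraction tool. The cleanest choice is to handle the ``if'' direction by a local parity/boundary argument at the odd wheel. The ``only if'' direction I would prove by induction on $|V(G)|+|E(G)|$, repeatedly contracting $3$-flow-contractible configurations inside a triangularly connected graph.

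\emph{``If'' direction.} First, an odd wheel $W_n$ has no nowhere-zero $3$-flow: every rim vertex has degree $3$, so in a $\mathbb Z_3$-flow its three incident edges carry values forcing an orientation pattern at each rim vertex. Following these patterns around the rim of odd length gives a parity contradiction. Next, suppose $G=G_1\oplus_e W$ with $e=uv$ and $G$ has a nowhere-zero $\mathbb Z_3$-flow $\phi$. Restrict $\phi$ to $W-e$ and let $\beta$ be the boundary at $u$, so the boundary at $v$ is $-\beta$.
\begin{itemize}
\item If $\beta\neq 0$, assign $-\beta$ to the edge $e$ of $G_1$ (with the appropriate orientation). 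This turns $\phi|_{G_1-e}$ into a nowhere-zero $3$-flow of $G_1$, a contradiction.
\item If $\beta=0$, then $\phi|_{W-e}$ is a flow of $W-e$. I would show directly that this is impossible: for an odd wheel, $W-e$ has a rim vertex of degree $2$ or a hub of odd degree, and suppressing or contracting produces a structure whose rim parity again forbids a flow. Checking this requires distinguishing whether $e$ is a spoke or a rim edge.
\end{itemize}
If the $\beta=0$ case turns out to be possible for some choice of $e$, I would instead use that $G_1$ is triangularly connected without a $3$-flow. The edge $e$ lies in a triangle of $G_1$, and I would show that $G_1-e$ also has no flow, so the restriction $\phi|_{G_1-e}$ gives a contradiction.

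\emph{``Only if'' direction.} Let $G$ be triangularly connected with no nowhere-zero $3$-flow and not an odd wheel. Since $G$ is triangularly connected, it is the union of triangles glued along edges.
\begin{itemize}
\item If $G$ has parallel edges, they form a digon; contracting it keeps triangular connectivity (after deleting loops), and by Theorem~\ref{thm:Z3-connected} the smaller graph still has no flow.
\item If some vertex $x$ has a neighbourhood containing an even cycle through triangles at $x$, or a chord creates an even wheel, contract that even wheel by Theorem~\ref{thm:Z3-connected}.
\item Otherwise, choose a maximal wheel $W$ (hub $x$) formed by consecutive triangles at $x$. Since $G$ is not itself a wheel, some rim vertex of $W$ has degree at least $4$ in $G$, or $W$ is attached through a single edge. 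The goal is to show that $W$ is odd and meets the rest of $G$ only along one edge $e$, so that $G=G_1\oplus_e W$.
\end{itemize}
In the last case I would then verify that $G_1$ is triangularly connected and has no $3$-flow, the latter using the $\beta\neq0$ argument above in reverse, and apply induction to $G_1$. In the contraction cases, induction applies to the contracted graph $G'$, which is either an odd wheel or a $2$-sum $G_1'\oplus_2 W$. One then lifts this decomposition back to $G$: the contracted configuration lies entirely inside one side, and if it lay inside the odd wheel, $G$ would have a flow, which is a contradiction.

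The main obstacle is this structural step: proving that, in the absence of digons and even wheels, an odd wheel is glued to the rest of a triangularly connected graph along exactly one edge. I would attack it by taking a triangle-path from an edge of $W$ to an edge outside $W$ and examining the first triangle leaving $W$. If that triangle shares only a vertex with $W$, or shares an edge other than a single fixed edge, I would exhibit either a digon after contraction or an even wheel, giving a contradiction.
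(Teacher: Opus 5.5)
Since the paper only quotes this result from Fan et al., I am judging your plan on its own. The easy direction contains a false step, and the hard direction is not actually proved.

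\textbf{The ``if'' direction.} You have the cases the wrong way round. In this paper's $2$-sum the common edge is kept ($E(G)=E(G_1)\cup E(W)$, $E(G_1)\cap E(W)=\{e\}$), so $e$ carries its own value $\phi(e)$. The boundary of $\phi$ on $E(G_1)\setminus\{e\}$ at $u$ is therefore $-\beta$ minus the contribution of $e$, so giving $e$ the value $\pm\beta$ does not in general produce a flow of $G_1$.

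More seriously, what you intend to prove when $\beta=0$---that $W-e$ has no nowhere-zero $3$-flow---is false. By Lemma~\ref{lem:odd-wheel-minus-edge}\ref{lem:odd-wheel-minus-edge:i}, $W-e$ always has one (already $K_4-e$ does). Your fallback claim that $G_1-e$ has no flow also fails when $G_1$ is itself an odd wheel.

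The correct argument is two lines:
\begin{itemize}
\item If $\beta=0$, then $\phi|_{E(G_1)}$, with $e$ keeping $\phi(e)$, is already a nowhere-zero $\mathbb{Z}_3$-flow of $G_1$.
\item If $\beta\neq0$, then $\phi|_{E(W)\setminus\{e\}}$, together with the nonzero value on $e$ that cancels $\beta$, is a nowhere-zero $\mathbb{Z}_3$-flow of $W$.
\end{itemize}
The fact you wanted to refute is exactly what the other direction needs. Once $G=G_1\oplus_e W$ is found, $G_1$ has no $3$-flow because any flow of $G_1$ would extend by a flow of $W-e$.

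\textbf{The ``only if'' direction.} This is the entire content of Fan et al.'s theorem, and your proposal stops at its crux.

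Your reduction cases are in fact vacuous. Suppose a triangularly connected $G$ contains an even wheel $H$. After contracting $H$, every triangle with exactly two vertices in the contracted set yields a digon. Repeatedly contracting such digons absorbs all of $G$, by triangular connectivity. So $G$ has a nowhere-zero $3$-flow by Theorem~\ref{thm:Z3-connected}, and you may simply assume $G$ contains no even wheel. The induction-and-lifting machinery is therefore unnecessary. As written it is also unjustified, because $G/H$ need not be triangularly connected. For example, take an even wheel and join one new vertex to both ends of each of two disjoint rim edges; contracting the wheel leaves two digons sharing only a vertex.

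Everything then rests on the structural step you only label ``the main obstacle''. As stated it is false for an arbitrary maximal wheel: in $(W\oplus_e W')\oplus_f W''$ the middle wheel $W'$ meets the rest of the graph along two edges. You still have to:
\begin{itemize}
\item show that $G$ contains a wheel at all;
\item handle odd wheels that share hubs, rim vertices, spokes or rim edges;
\item choose an extremal wheel;
\item prove that its vertices other than the ends of a single edge have no neighbours outside it;
\item check that the remaining $G_1$ is triangularly connected.
\end{itemize}
``Examining the first triangle leaving $W$'' does not yet address any of this.
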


\begin{lemma}
\label{lem:odd-wheel-minus-edge}
Let \(W\) be an odd wheel. Then the following hold.

\medskip \noindent
{\makeatletter\def\@currentlabel{(i)}\label{lem:odd-wheel-minus-edge:i}\makeatother}
(i) For every edge \(e\in E(W)\), both \(W-e\) and \(W/e\) admit a nowhere-zero \(3\)-flow.

\medskip \noindent
{\makeatletter\def\@currentlabel{(ii)}\label{lem:odd-wheel-minus-edge:ii}\makeatother}
(ii) \(W\) does not admit an \(S^1\)-flow.
\end{lemma}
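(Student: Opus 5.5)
For (i), I will handle the two kinds of edges separately: spokes and rim edges. Let \(W=W_n\) with \(n\) odd, hub \(h\) and rim \(v_0,\dots,v_{n-1}\).

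\emph{Spoke \(e=hv_0\).} Then \(W/e\) consists of the rim cycle \(C_n\) together with the spokes \(hv_1,\dots,hv_{n-1}\), now joining \(v_0\) to the other rim vertices. In particular \(v_0v_1\) and \(v_0v_{n-1}\) become digons. Contracting such a digon yields a wheel with one fewer rim vertex, i.e.\ an even wheel \(W_{n-1}\) with hub \(v_0\) (and possibly further digons). Even wheels admit nowhere-zero \(3\)-flows. Hence, by Theorem~\ref{thm:Z3-connected} applied to the digons (and then the even wheel, down to a single vertex), \(W/e\) admits a nowhere-zero \(3\)-flow.

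For \(W-e\), the rim vertex \(v_0\) has degree \(2\), so we may suppress it. The result is a wheel \(W_{n-1}\) whose rim edge \(v_{n-1}v_1\) is subdivided. A subdivided edge with a \(2\)-vertex can carry the same value on both halves, so a nowhere-zero \(3\)-flow of the even wheel \(W_{n-1}\) lifts to one of \(W-e\).

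\emph{Rim edge \(e=v_0v_1\).} Now \(W/e\) has hub \(h\) joined to the merged vertex \(x\) by two parallel spokes, forming a digon. Contracting that digon leaves the cycle \(C_{n-1}\) with all its vertices identified to \(h\) together with \(x\), which collapses to a single vertex. So by Theorem~\ref{thm:Z3-connected} it suffices to observe that after the digon contraction we have a wheel \(W_{n-1}\) (hub \(h\), rim \(x,v_2,\dots,v_{n-1}\)), which is even and hence fine.

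For \(W-e\), the vertices \(v_0\) and \(v_1\) have degree \(3\). I will instead construct a \(\Z_3\)-flow directly. Take the hub triangles \(hv_iv_{i+1}\) for the \(n-1\) remaining rim edges. On the path \(v_1v_2\cdots v_{n-1}v_0\), orient the triangles consistently and assign values with alternating signs; since the path has \(n-1\) (even) edges, the spokes \(hv_1\) and \(hv_0\) cancel appropriately. Alternatively, I can appeal to Tutte's deletion–contraction relation for the flow polynomial, \(F(W;3)=F(W/e;3)-F(W-e;3)\). Since \(F(W;3)=0\) and \(F(W/e;3)>0\), this gives \(F(W-e;3)>0\), which is cleaner and covers spokes as well. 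I will use this argument.

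For (ii), suppose \(W\) has an \(S^1\)-flow. Every rim vertex has degree \(3\), so at each rim vertex the three incident values form a rotated copy of the cube roots of unity, i.e.\ pairwise angles of \(2\pi/3\). Going around the rim, if a rim edge carries value \(\bm r_i\), the next rim edge carries \(\bm r_i\) rotated by \(\pm\pi/3\) (up to the sign convention from orientation), as in the proof of Lemma~\ref{lem:odd-wheel-any-absorber}. Orienting the rim cyclically, we get \(\bm r_{i}=\bm r_{i-1}R^{\varepsilon_i}\) with \(\varepsilon_i=\pm1\), and spoke \(s_i=\bm r_i-\bm r_{i-1}\). Going once around the rim forces \(\sum\varepsilon_i\equiv 0 \pmod 6\). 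Since \(n\) is odd, \(\sum\varepsilon_i\) is odd, a contradiction.

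Justifying \(\bm r_i=\bm r_{i-1}R^{\pm1}\) is the key step: for unit vectors \(\bm r_i,\bm r_{i-1}\) whose difference is a unit vector, the angle between them is exactly \(\pi/3\). This is immediate from \(\lVert \bm r_i-\bm r_{i-1}\rVert^2=2-2\bm r_i\cdot\bm r_{i-1}=1\). The hub equation is not even needed. I expect this parity argument to be the main content; part (i) is routine.
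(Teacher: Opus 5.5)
Your part (i), in its final form, is the paper's argument. The deletion--contraction identity $F(W;3)=F(W/e;3)-F(W-e;3)$ with $F(W;3)=0$ reduces everything to showing that $W/e$ has a nowhere-zero $3$-flow, and that follows from Theorem~\ref{thm:Z3-connected}.

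Your descriptions of $W/e$, however, are wrong.
\begin{itemize}
\item For a spoke $e=hv_0$, neither $W/e$ nor the graph obtained after contracting the digon $v_0v_1$ is a wheel. The latter has only $n-1$ vertices, while $W_{n-1}$ has $n$. In fact $W/e$ is the fan with apex $v_0$ on the path $v_1\cdots v_{n-1}$, with its two end edges doubled. It collapses to a single vertex by successive digon contractions, and no even wheel ever appears.
\item For a rim edge, $W/e$ is $W_{n-1}$ with the spoke $hx$ doubled. It is an even wheel \emph{before} that digon is contracted, not after.
\end{itemize}
The paper's uniform remark is the clean justification: for every $e$, $W/e$ shrinks to a point by repeatedly contracting digons. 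Once you use deletion--contraction, your separate treatments of $W-e$ are unnecessary.

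Part (ii) is correct and takes a genuinely different route from the paper.

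The paper treats $K_4$ via Thomassen's cubic equivalence and $n\ge5$ via Theorem~\ref{thm:wang}\ref{thm:wang-degree3}, where the $3$-vertices induce the rim and the rest is the hub. Each case is combined with the known fact that odd wheels have no nowhere-zero $3$-flow.

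Your argument is direct. After normalizing orientations, the flow condition at each rim vertex gives $s_i=r_i-r_{i-1}$ with all three vectors unit. Hence $r_i\cdot r_{i-1}=1/2$, so $r_i=r_{i-1}R^{\pm1}$. Closing up the rim gives $R^{\sum\varepsilon_i}=I$, so $6\mid\sum\varepsilon_i$. This is impossible for a sum of an odd number of terms $\pm1$. The hub condition holds automatically, since $\sum s_i$ telescopes.

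What your route buys:
\begin{itemize}
\item It is self-contained and handles all odd $n$, including $K_4$, uniformly.
\item It avoids the Wang et al.\ theorem entirely.
\item Combined with Theorem~\ref{thm:3-R3-S1-flow}, it reproves that odd wheels have no nowhere-zero $3$-flow. That is exactly the input $F(W;3)=0$ you use in (i), and there is no circularity because (ii) does not depend on (i).
\end{itemize}
It is essentially the rotation mechanism of Lemma~\ref{lem:odd-wheel-any-absorber} run in reverse. The paper's version is shorter only because it leans on heavier cited results.
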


\begin{proof}
\ref{lem:odd-wheel-minus-edge:i} Since an odd wheel does not admit a nowhere-zero \(3\)-flow, by Tutte's flow polynomial theorem again,  \(W-e\) admits a nowhere-zero \(3\)-flow if and only if \(W/e\) does. It therefore suffices to consider \(H=W/e\).
The graph \(H\) can be reduced to a single vertex by successively contracting digons. By Theorem~\ref{thm:Z3-connected}, \(H\) admits a nowhere-zero \(\mathbb Z_3\)-flow, and hence a nowhere-zero \(3\)-flow.

\medskip
\ref{lem:odd-wheel-minus-edge:ii} If $W=K_4$, then $W$ does not admit a nowhere-zero $3$-flow. Since $K_4$ is cubic, Theorem~\ref{thm:3-R3-S1-flow} implies that $W$ does not admit an $S^1$-flow. Now assume that $|V(W)|\geq 5$. Then $W[V_3(W)]$ is a cycle and hence connected, while $W-V_3(W)$ consists of a single vertex and is therefore acyclic. Since an odd wheel does not admit a nowhere-zero $3$-flow, Theorem~\ref{thm:wang}\ref{thm:wang-degree3} implies that $W$ does not admit an $S^1$-flow.
\end{proof}

\begin{proposition}
\label{prop:2-sum-wheel}
Let $G=G_1\oplus_e W$, where $W$ is a wheel. If either $G_1$
admits an $S^1$-flow or $G_1$ is bridgeless and $G_1/e$ admits a nowhere-zero $3$-flow, then $G$ admits an $S^1$-flow.
\end{proposition}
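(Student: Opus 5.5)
We will reduce everything to Theorem~\ref{thm:intro-2-sum-general}. The key input about the wheel is that, for every edge $e\in E(W)$, the graph $W/e$ admits a nowhere-zero $3$-flow. We also use the fact that $W$ is bridgeless.

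First we check this input for both parities. If $W$ is an odd wheel, it is Lemma~\ref{lem:odd-wheel-minus-edge}\ref{lem:odd-wheel-minus-edge:i}. If $W$ is an even wheel, then $W$ itself admits a nowhere-zero $3$-flow: it contracts to a single vertex by Theorem~\ref{thm:Z3-connected}. Hence $W/e$ also admits a nowhere-zero $3$-flow, since contraction preserves flows. A wheel is $2$-connected, so it is bridgeless.

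\textbf{Case 1: $G_1$ admits an $S^1$-flow.} We apply Theorem~\ref{thm:intro-2-sum-general}\ref{thm:intro-2-sum-general:ii}, with the roles of the two summands chosen as follows. The summand whose contraction by $e$ has a nowhere-zero $3$-flow is $W$. The summand carrying an $S^1$-flow is $G_1$. Since $G=W\oplus_e G_1$ is the same graph, this gives an $S^1$-flow of $G$. We must check that the definition of $2$-sum is symmetric, and it is: the definition only asks that $E(G)=E(G_1)\cup E(G_2)$, that the edge sets meet exactly in $\{e\}$, and that the vertex sets meet exactly in the ends of $e$.

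\textbf{Case 2: $G_1$ is bridgeless and $G_1/e$ admits a nowhere-zero $3$-flow.} Both summands are now bridgeless, and both contractions $G_1/e$ and $W/e$ admit nowhere-zero $3$-flows. So Theorem~\ref{thm:intro-2-sum-general}\ref{thm:intro-2-sum-general:iv} applies directly and gives an $S^1$-flow of $G$.

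The only real content is the claim that $W/e$ has a nowhere-zero $3$-flow for every edge $e$, whether $e$ is a spoke or a rim edge. For odd wheels this is already supplied by Lemma~\ref{lem:odd-wheel-minus-edge}. For even wheels it follows by contracting digons via Theorem~\ref{thm:Z3-connected}. So no step should be a serious obstacle.
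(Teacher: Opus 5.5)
Your proposal is correct and matches the paper's proof: show that $W/e$ admits a nowhere-zero $3$-flow for both parities (Theorem~\ref{thm:Z3-connected} for even wheels, Lemma~\ref{lem:odd-wheel-minus-edge}\ref{lem:odd-wheel-minus-edge:i} for odd wheels), then apply Theorem~\ref{thm:intro-2-sum-general}. The only difference is that you spell out what the paper leaves implicit: the first case uses part~\ref{thm:intro-2-sum-general:ii} with the roles of the summands swapped, and the second case uses part~\ref{thm:intro-2-sum-general:iv} together with the fact that $W$ is bridgeless.
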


\begin{proof}
If $W$ is an even wheel, then $W$ admits a nowhere-zero $3$-flow, and hence
so does $W/e$.  If $W$ is an odd wheel, by 
Lemma~\ref{lem:odd-wheel-minus-edge}\ref{lem:odd-wheel-minus-edge:i}, $W/e$ admits a nowhere-zero $3$-flow.
Thus, in either case, $W/e$ admits a nowhere-zero $3$-flow. The result now
follows immediately from Theorem~\ref{thm:intro-2-sum-general}.
\end{proof}

We are now ready to prove
Theorem~\ref{thm:s1-characterization}\ref{thm:s1-characterization:triangular}.

\begin{proof}[\bf Proof of Theorem~\ref{thm:s1-characterization}\ref{thm:s1-characterization:triangular}]
By Lemma~\ref{lem:odd-wheel-minus-edge}\ref{lem:odd-wheel-minus-edge:ii}, if $G$ is an odd wheel, then it does not admit an $S^1$-flow. 

Now we prove the converse by contradiction.  Let $G$ be a counterexample with $|V(G)|$  minimum. Then $G$ is 
 triangularly connected,  is not
an odd wheel, and  does not admit an \(S^1\)-flow. By Theorem~\ref{thm:3-R3-S1-flow}, $G$
does not admit a nowhere-zero \(3\)-flow.

By Theorem~\ref{thm:fan},  $G =G_1\oplus_e W$, where $W$ is an odd wheel and \(G_1\) is triangularly connected and does not admit a nowhere-zero \(3\)-flow. 
Since \(G\) does not admit an \(S^1\)-flow,
Proposition~\ref{prop:2-sum-wheel} implies that \(G_1\) does not admit
an \(S^1\)-flow. Moreover, $|V(G_1)|<|V(G)|$. The minimality of \(G\) therefore forces \(G_1\) to be an odd wheel.
Lemma~\ref{lem:odd-wheel-minus-edge} then implies that \(G_1/e\) admits
a nowhere-zero \(3\)-flow. Thus by Theorem~\ref{thm:intro-2-sum-general}\ref{thm:intro-2-sum-general:iv}, $G$ admits an $S^1$-flow, a contradiction.
This completes the proof.
\end{proof}

\section{$S^1$-flows in graphs with a spanning triangle-tree}

In this section, we prove Theorem~\ref{thm:s1-characterization}\ref{thm:s1-characterization:crystal}. We first introduce some preliminary
terminology and results on triangle-paths and crystals.

\subsection{Outerplane embedding of triangle-paths}

We first describe an outerplane embedding of a triangle-path that will be
used in the proof of the result on odd crystals.

Let $\mathcal P=\mathcal P(u,v)$ be a triangle-path with at least five vertices.
Its triangles can be ordered as
$T_1,T_2,\ldots,T_m$ such that consecutive triangles share exactly one edge, while any two nonconsecutive triangles are edge-disjoint. The edges of $\mathcal{P}$ that belong to exactly one triangle form a Hamilton cycle of $\mathcal{P}$, which we denote by $C_{\mathcal{P}}$.  In fact, $C_{\mathcal P}$ is the symmetric difference of the edge sets of
$T_1,T_2,\ldots,T_m$.

Let $u$ and $v$ be the two  $2$-vertices of $\mathcal{P}$, and let
$C_\ell$ and $C_r$ denote the two $(u,v)$-paths of $C_{\mathcal{P}}$. 
A path in $\mathcal{P}$ whose vertex set is exactly the set of
$4^+$-vertices of $\mathcal{P}$ will be called a \emph{backbone}.

The following observation gives an outerplane embedding of a triangle-path and describes the structure of its backbone.

\begin{observation}
\label{obs:geometric-triangle-path}
Let $\mathcal{P}=\mathcal{P}(u,v)$ be a triangle-path with at least five
vertices. Then $\mathcal{P}$ has an outerplane embedding (see Figure~\ref{geo}) satisfying the
following properties.
\begin{enumerate}[label=(\arabic*)]
    \item
    \label{obs:geometric-triangle-path:outer-face}
    The outer face of the embedding is bounded by $C_{\mathcal{P}}$.

    \item
    \label{obs:geometric-triangle-path:backbone}
    There is a backbone $u_1u_2\cdots u_k$ of $\mathcal{P}$ such that the
    following hold. Let $u_0:=u$ and $u_{k+1}:=v$.
    \begin{enumerate}[label=(\roman*)]
        \item
        \label{obs:geometric-triangle-path:backbone-edges}
        For each $1\le i<k$, the edge $u_iu_{i+1}$ is a chord of
        $C_{\mathcal{P}}$ whose ends lie on opposite boundary paths
        $C_\ell$ and $C_r$. Moreover, $u$ is adjacent to $u_1$ and
        $v$ is adjacent to $u_k$.

        \item
        \label{obs:geometric-triangle-path:Qi}
        For each $1\le i\le k$, let $Q_i$ be the
        $(u_{i-1},u_{i+1})$-subpath of $C_{\mathcal{P}}$ whose interior
        contains no backbone vertex. Every internal vertex of $Q_i$ has
        degree three in $\mathcal{P}$ and is adjacent to $u_i$.
    \end{enumerate}

    \item
    \label{obs:geometric-triangle-path:leaf}
    Each leaf of $\mathcal{P}$ is adjacent to one backbone vertex and one
    $3$-vertex.

    \item
    \label{obs:geometric-triangle-path:no-4}
    If $\mathcal{P}$ has no $4$-vertex, then each internal backbone vertex
    $u_i$ $(2\le i\le k-1)$ has precisely two $4^+$-neighbors, namely
    $u_{i-1}$ and $u_{i+1}$; its other $d_{\mathcal{P}}(u_i)-2$ neighbors
    have degree three. In this case, every $Q_i$ contains an internal
    $3$-vertex. Moreover, if $k\ge2$, then each of $Q_1$ and $Q_k$
    contains at least two internal $3$-vertices.
\end{enumerate}
\end{observation}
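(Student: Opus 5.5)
The plan is to argue by induction on the number $m$ of triangles (equivalently on $|V(\mathcal P)|$). The key structural tool is the linear ordering $T_1,\dots,T_m$ of triangles, in which consecutive triangles share exactly one edge and nonconsecutive ones are edge-disjoint. Because $\mathcal P$ has exactly two leaves, the construction never branches. Each new vertex is attached to an edge of the most recently added triangle that is not the edge it shared with its predecessor. So $\mathcal P$ is a ``strip'' of triangles glued along edges.

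First I will establish the outerplane embedding by drawing $T_1,\dots,T_m$ successively. Each $T_{j+1}$ is placed on the far side of the shared edge $T_j\cap T_{j+1}$, so the drawing remains outerplanar. The shared edges are the interior chords. The boundary of the outer face consists of edges lying in exactly one triangle, and this is the symmetric difference $C_{\mathcal P}$, which proves (1). The leaves $u\in T_1$ and $v\in T_m$ split $C_{\mathcal P}$ into $C_\ell$ and $C_r$.

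Next I will identify the backbone. A vertex has degree $2$ exactly when it lies in one triangle, which happens only for $u$ and $v$. Degree $3$ means the vertex lies in exactly two consecutive triangles, and degree at least $4$ means it lies in at least three. Each vertex lies in a consecutive block of triangles, so each internal vertex $x$ of $\mathcal P$ has a ``fan'' of triangles at $x$. I will take $u_1,\dots,u_k$ to be the $4^+$-vertices, ordered by the first triangle in which they appear. The main obstacle is showing that consecutive $u_i,u_{i+1}$ are adjacent via a chord joining $C_\ell$ and $C_r$. I plan to use the fan decomposition: the triangles are partitioned into maximal fans centred alternately at vertices on $C_\ell$ and on $C_r$, and the centres of consecutive fans share the chord that separates them. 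Degenerate fans of a single triangle, whose vertices all have degree $\le3$ there, have to be handled carefully. I will argue that fan centres with at least three triangles are exactly the $4^+$-vertices, and that the separating chord between consecutive large fans joins their centres. Since $u$ lies in $T_1$, which belongs to the first fan, we get $u\sim u_1$, and symmetrically $v\sim u_k$. This gives (2)(i).

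For (2)(ii), the vertices on $C_{\mathcal P}$ strictly between $u_{i-1}$ and $u_{i+1}$ on the side of $u_i$'s fan are exactly the outer ends of the fan triangles at $u_i$. Each such vertex lies in exactly two triangles of that fan, so it has degree $3$ and is adjacent to $u_i$. Part (3) follows directly: $u\in T_1$ has two neighbours, namely $u_1$ and the other vertex of $T_1$. That other vertex lies in $T_1,T_2$ only, because otherwise it would be $u_1$. Here the assumption $|V|\ge5$ ensures that $T_1$ contains a $4^+$-vertex and a $3$-vertex. Finally, for (4), if there are no $4$-vertices then every backbone vertex has degree $\ge5$ and so lies in at least four triangles. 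Its fan therefore contributes at least $d-2\ge3$ outer neighbours, all of degree $3$ by (ii), which gives an internal $3$-vertex on each $Q_i$. For the end fans, $u_1$ is adjacent to $u$ and $u_2$, and the remaining $d(u_1)-2\ge3$ neighbours lie on $Q_1$. Since $u$ is an endpoint of $Q_1$ and not internal, at least two of these neighbours are internal $3$-vertices. The same argument applies to $Q_k$. Throughout, I expect the fan and alternation bookkeeping in (2)(i) to require the most care, and the remaining parts to be routine counting.
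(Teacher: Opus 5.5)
Your overall route is the paper's: draw $T_1,\dots,T_m$ successively to get (1), and read the backbone off the fans of triangles at the $4^+$-vertices. As written, though, the proposal has two gaps.

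\textbf{Part (2)(i).} You rightly call this the heart of the matter, but you only announce it, and the fan picture you describe is not yet an argument. The full fans at two consecutive $4^+$-vertices overlap in two triangles, so they do not partition the triangles. The ``degenerate fans'' you worry about never occur, since for $m\ge3$ every triangle contains a $4^+$-vertex. The missing idea is the paper's pivot sequence. Let $e_j$ be the edge shared by $T_j$ and $T_{j+1}$. For $2\le j\le m-1$, let $w_j$ be the common end of $e_{j-1}$ and $e_j$; these two edges are distinct because $T_{j-1}$ and $T_{j+1}$ are edge-disjoint. Then:
\begin{enumerate}[label=(\alph*)]
\item A vertex in three consecutive triangles $T_{j-1},T_j,T_{j+1}$ lies on both $e_{j-1}$ and $e_j$, so the $4^+$-vertices are exactly the values of the $w_j$.
\item A maximal run of $r_i$ equal terms $w_j=u_i$ means $u_i$ lies in exactly $r_i+2$ triangles, so $d(u_i)=r_i+3$.
\item Where one run ends and the next begins, $w_j=u_i$ and $w_{j+1}=u_{i+1}$ are both ends of $e_j$, so $e_j=u_iu_{i+1}$ is a chord.
\end{enumerate}
Since $e_j$ separates $T_1$ from $T_m$ in the outerplane drawing, its ends lie on different $(u,v)$-paths. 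Also $u\sim w_2=u_1$ and $v\sim w_{m-1}=u_k$, because $w_2\in T_1$ and $w_{m-1}\in T_m$.

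\textbf{Part (4).} Your counting here is wrong as written, even though the bounds you reach happen to be true. For $k\ge2$, not all $d(u_1)-2$ neighbours of $u_1$ other than $u,u_2$ lie on $Q_1$. The neighbour of $u_1$ on $C_{\mathcal P}$ other than $u$ lies on the same $(u,v)$-path as $u_1$, hence off $Q_1$. The interior of $Q_1$ has exactly $d(u_1)-3$ vertices, and you reach ``at least two'' only by subtracting $u$ a second time, although it was already excluded. Similarly, for $2\le i\le k-1$ the interior of $Q_i$ has $d(u_i)-4$ vertices, not $d(u_i)-2$.

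More seriously, the first claim of (4) does not follow ``by (ii)''. That claim says $u_{i-1}$ and $u_{i+1}$ are the only $4^+$-neighbours of $u_i$. But the two neighbours of $u_i$ on $C_{\mathcal P}$ are not on $Q_i$, so (ii) applied to $Q_i$ says nothing about them. To see that they have degree three, you must know they are interior vertices of $Q_{i-1}$ and $Q_{i+1}$, and this is exactly where the no-$4$-vertex hypothesis enters. Without it the claim fails. If $3\le i\le k-1$ and $d(u_{i-1})=4$, then $u_{i-1}$ is the pivot of a single triangle. That triangle contains $e_{j-1}=u_{i-2}u_{i-1}$ and $e_j=u_{i-1}u_i$, so it is $u_{i-2}u_{i-1}u_i$, and $u_i\sim u_{i-2}$. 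You need to show, as the paper does, that every neighbour of $u_i$ other than $u_{i\pm1}$ lies in the interior of $Q_{i-1}$, $Q_i$ or $Q_{i+1}$.
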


\begin{figure}[htbp]
\centering
\begin{tikzpicture}[scale=0.6, v_main/.style={circle, draw=black!80, fill=white, inner sep=0pt, minimum size=1.2mm, thick},
    v_back/.style={circle, draw=black!80, fill=black!80, inner sep=0pt, minimum size=1.5mm},
    font=\small]

    \def\R{4.5}

    \foreach \i in {0,1,2,...,21} {
        \coordinate (p\i) at ({90 - \i*(360/22)}:\R);
    }

    \draw[line width=0.8pt, black] (p0) \foreach \x in {1,...,21} { -- (p\x) } -- cycle;

    \begin{scope}[blue, line width=0.6pt]

        \draw (p1) -- (p21); \draw (p1) -- (p20); \draw (p1) -- (p19);
        \node[blue, rotate=-20] at ({90 + 1.5*(360/22)}: \R*0.7) {$\dots$};

        \draw (p18) -- (p2); \draw (p18) -- (p3); \draw (p18) -- (p4);
        \node[blue, rotate=20] at ({90 - 3*(360/22)}: \R*0.7) {$\dots$};

        \draw (p5) -- (p17); \draw (p5) -- (p16);

        \draw (p15) -- (p6); \draw (p15) -- (p7); \draw (p15) -- (p8); \draw (p15) -- (p9);
        \node[blue, rotate=-40] at ({90 - 7.5*(360/22)}: \R*0.7) {$\dots$};

        \draw (p10) -- (p14); \draw (p10) -- (p13); \draw (p10) -- (p12);
        \node[blue, rotate=40] at ({90 - 13.5*(360/22)}: \R*0.7) {$\dots$}; 
    \end{scope}

    \begin{scope}[red, line width=1.2pt]
        \draw (p1) -- (p18);
        \draw (p18) -- (p5);
 
        \node[red, font=\Large] at (0,-0.5) {$\vdots$};
        \draw (p15) -- (p10);
    \end{scope}

     \node[v_back, label=above:{$u$}] at (p0) {};
    \node[v_back, label=right:{$u_1$}] at (p1) {};
    \node[v_back, label=left:{$u_2$}] at (p18) {};
    \node[v_back, label=right:{$u_3$}] at (p5) {};
    \node[v_back, label=left:{$u_{k-1}$}] at (p15) {};
    \node[v_back, label=right:{$u_k$}] at (p10) {};
    \node[v_back, label=below:{$v$}] at (p11) {};

    \foreach \i in {2,3,4,6,7,8,9,12,13,14,16,17,19,20,21} {
        \node[v_main] at (p\i) {};
    }

    \node at ({90 - 3.5*(360/22)}: \R+0.3) {$\dots$};
    \node at ({90 - 7.5*(360/22)}: \R+0.3) {$\dots$};
    \node at ({90 - 16.5*(360/22)}: \R+0.3) {$\dots$};
    \node at ({90 - 13.5*(360/22)}: \R+0.3) {$\dots$};
    \node at ({90 - 20*(360/22)}: \R+0.3) {$\dots$};

\end{tikzpicture}
\caption{A outerplane embedding of a triangle-path}
\label{geo}
\end{figure}

\begin{proof}
Recall that $T_1,T_2,\ldots,T_m$ are the triangles of $\mathcal P$,
ordered so that $T_i$ and $T_{i+1}$ share an edge for each $1\le i<m$,
while any two nonconsecutive triangles are edge-disjoint. Draw $T_1$
in the plane, and successively add each $T_{i+1}$ on the side of the
edge shared by $T_i$ and $T_{i+1}$ opposite to the previously drawn
part. This gives an outerplane embedding whose outer face is bounded
by $C_{\mathcal P}$. Thus \ref{obs:geometric-triangle-path:outer-face}
holds.

For $1\le j<m$, let $e_j$ be the edge shared by $T_j$ and $T_{j+1}$.
For $2\le j\le m-1$, let $w_j$ be the common end of $e_{j-1}$ and
$e_j$. Note that the vertices $w_2,\ldots,w_{m-1}$ need not be
distinct. Moreover, if $w_j=w_{j'}$ for some
$2\le j<j'\le m-1$, then
$w_j=w_{j+1}=\cdots=w_{j'}$. Thus the sequence
$w_2,\ldots,w_{m-1}$ consists of maximal consecutive subsequences of
equal vertices. Let $u_1,\ldots,u_k$ be the vertices corresponding to
these subsequences, in their order, and let $r_i$ be the length of the
subsequence corresponding to $u_i$.

A vertex that belongs to exactly $t$ consecutive triangles has degree
$t+1$ in $\mathcal P$. Hence $u_1,\ldots,u_k$ are precisely the
$4^+$-vertices of $\mathcal P$, and
\(
d_{\mathcal P}(u_i)=r_i+3.
\)
Suppose that $u_i$ and $u_{i+1}$ correspond to two consecutive
subsequences. Then, for some $j$, we have $w_j=u_i$ and
$w_{j+1}=u_{i+1}$. By the definitions of $w_j$ and $w_{j+1}$, the two
vertices $u_i$ and $u_{i+1}$ are the ends of $e_j$. Hence
$u_iu_{i+1}$ is a chord of $C_{\mathcal P}$. Moreover, the two ends of
each $e_j$ lie on opposite $(u,v)$-paths $C_\ell$ and $C_r$.
Consequently, the backbone vertices alternate between $C_\ell$ and
$C_r$. Also, $u$ is adjacent to $u_1$ and $v$ is adjacent to $u_k$.
Therefore $u_1u_2\cdots u_k$ is a backbone satisfying
\ref{obs:geometric-triangle-path:backbone}\ref{obs:geometric-triangle-path:backbone-edges}.

The $r_i$ consecutive occurrences of $u_i$ correspond to $r_i+2$
consecutive triangles containing $u_i$. The vertices introduced between
these occurrences lie in the interior of $Q_i$, are adjacent to $u_i$,
and belong to exactly two consecutive triangles. Hence they have degree
three in $\mathcal P$.
Thus
\ref{obs:geometric-triangle-path:backbone}\ref{obs:geometric-triangle-path:Qi}
holds, and therefore
\ref{obs:geometric-triangle-path:backbone} follows.

The first and last triangles in the sequence show that every leaf of
$\mathcal P$ is adjacent to one backbone vertex and one $3$-vertex.
Thus \ref{obs:geometric-triangle-path:leaf} holds.

Finally, suppose that $\mathcal P$ has no $4$-vertex. Then
$d_{\mathcal P}(u_i)\ge5$ for every $i$, and hence $r_i\ge2$ for every
$i$. It follows that each $Q_i$ contains an internal vertex, while
$Q_1$ and $Q_k$ each contain at least two internal vertices when
$k\ge2$. By
\ref{obs:geometric-triangle-path:backbone}\ref{obs:geometric-triangle-path:Qi},
every internal vertex of $Q_i$ has degree three. If
$2\le i\le k-1$, then every neighbor of $u_i$ other than
$u_{i-1}$ and $u_{i+1}$ belongs to the interior of one of
$Q_{i-1}$, $Q_i$, and $Q_{i+1}$, and hence has degree three. Therefore,
$u_{i-1}$ and $u_{i+1}$ are precisely the $4^+$-neighbors of $u_i$.
Thus \ref{obs:geometric-triangle-path:no-4} holds.
\end{proof}

\subsection{Properties of odd crystals}

We first establish some preliminary results.

\begin{observation}\label{ob:pi/3}
Let $\bm z_1, \bm z_2, \bm z_3$ be three vectors in $S^1$. If there exist three integers $\varepsilon_1, \varepsilon_2,\varepsilon_3 \in \{-1, 1\}$ such that
 $\varepsilon_1\bm z_1 + \varepsilon_2\bm z_2 + \varepsilon_3\bm z_3 = \bm 0$, then 
 the set
$\{\pm \bm z_1,  \pm \bm z_2, \pm\bm z_3\}$ is the vertex set of a regular
hexagon inscribed in the unit circle.
\end{observation}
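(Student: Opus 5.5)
Replace each $\bm z_i$ by $\bm y_i:=\varepsilon_i\bm z_i$. These are still unit vectors, they satisfy $\bm y_1+\bm y_2+\bm y_3=\bm 0$, and the set $\{\pm\bm z_1,\pm\bm z_2,\pm\bm z_3\}$ equals $\{\pm\bm y_1,\pm\bm y_2,\pm\bm y_3\}$. So it suffices to treat the case $\varepsilon_1=\varepsilon_2=\varepsilon_3=1$.

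Next we pin down the angles between the $\bm y_i$. From $\bm y_3=-(\bm y_1+\bm y_2)$ and $\|\bm y_3\|=1$ we get
\[
1=\|\bm y_1+\bm y_2\|^2=2+2\,\bm y_1\cdot\bm y_2 ,
\]
so $\bm y_1\cdot\bm y_2=-1/2$. By symmetry, every pair $\bm y_i,\bm y_j$ with $i\ne j$ meets at angle $2\pi/3$. This is the fact already used in the proof of Lemma~\ref{le:extension}\ref{le:extension:2}: the three vectors are the cube roots of unity up to an orthogonal transformation. Choose, by Proposition~\ref{prop:basic}\ref{prop:basic:1}, a rotation sending $\bm y_1$ to $(1,0)$. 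Then $\bm y_2$ is one of $(\cos 2\pi/3,\pm\sin 2\pi/3)$, and $\bm y_3=-\bm y_1-\bm y_2$ is the other. So $\{\bm y_1,\bm y_2,\bm y_3\}$ is the image of $\{1,\omega,\omega^2\}$, with $\omega=e^{2\pi i/3}$, under an orthogonal map.

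Finally, $\{\pm1,\pm\omega,\pm\omega^2\}$ is exactly the set of sixth roots of unity, which is the vertex set of a regular hexagon inscribed in the unit circle. Orthogonal maps preserve the unit circle and send regular hexagons to regular hexagons, so the claim follows.

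There is no real obstacle in this argument. The only point needing care is that the hexagon has six distinct vertices, i.e.\ that no $\bm y_i$ equals some $\pm\bm y_j$ with $j\ne i$. This is guaranteed by $\bm y_i\cdot\bm y_j=-1/2\ne\pm1$.
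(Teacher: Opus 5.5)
Your proof is correct and follows the same route as the paper: you replace $\bm z_i$ by $\varepsilon_i\bm z_i$, note that three unit vectors summing to zero form an equilateral triangle centered at the origin, and then observe that adjoining the antipodes gives six equally spaced points on the unit circle. The only difference is that you justify the equilateral-triangle step via $\bm y_i\cdot\bm y_j=-1/2$ and check distinctness of the six points, both of which the paper leaves implicit. (Proposition~\ref{prop:basic}\ref{prop:basic:1} gives an orthogonal map rather than a rotation, but an orthogonal map is all your argument uses.)
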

\begin{proof}
Since  $\varepsilon_1\bm z_1 + \varepsilon_2\bm z_2 + \varepsilon_3\bm z_3 = \bm 0$ and each $\varepsilon_i\bm z_i \in S^1$, 
 the three vectors
$\varepsilon_1\bm z_1,  \varepsilon_2\bm z_2, \varepsilon_3\bm z_3$ form an equilateral triangle
centered at the origin. Hence their antipodal points together with the
three original vectors give six equally spaced points on the unit circle.
Therefore, $\{\pm \bm z_1,  \pm \bm z_2, \pm\bm z_3\} =\{\pm \varepsilon_1\bm z_1,  \pm  \varepsilon_2\bm z_2, \pm \varepsilon_3\bm z_3\}$
is the vertex set of a regular hexagon.
\end{proof}

\begin{lemma}
\label{lem:regular-hexagon-extension}
Let $k\ge1$, and let $\bm z_1,\ldots,\bm z_k,\bm a,\bm b \in S^1$ satisfy
$\bm z_1+\cdots+\bm z_k+\bm a+\bm b=\bm 0$
and
$\bm a+\bm b\ne\bm 0$.
Suppose that $\bm z_1,\ldots,\bm z_k$ are vertices of a fixed regular hexagon
inscribed in the unit circle. Then $\bm a$ and $\bm b$ are also vertices of
the same regular hexagon.
\end{lemma}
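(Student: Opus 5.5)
Set $\bm s=\bm a+\bm b=-(\bm z_1+\cdots+\bm z_k)$. The hexagon vertices are the sixth roots $\bm h_0,\ldots,\bm h_5$ (after a rotation, we may take $\bm h_j=(\cos j\pi/3,\sin j\pi/3)$). They span the Eisenstein lattice $\Lambda=\{m\bm h_0+n\bm h_1: m,n\in\mathbb Z\}$, which is closed under negation and addition. Hence $\bm s\in\Lambda$ and $\bm s\neq\bm 0$. Since $\bm a,\bm b$ are unit vectors, $\|\bm s\|\le 2$.

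The plan is to list the nonzero lattice points of norm at most $2$, and then use the uniqueness fact from the proof of Proposition~\ref{prop:antipodal}: an unordered pair of unit vectors with a prescribed nonzero sum is unique. The norm squared of $m\bm h_0+n\bm h_1$ is $m^2+mn+n^2$. The values not exceeding $4$ are $1$, $3$ and $4$, so the nonzero points of $\Lambda$ in the disk of radius $2$ fall into three types.

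\textbf{Type $\|\bm s\|=1$.} Here $\bm s=\bm h_j$ for some $j$, and $\bm h_j=\bm h_{j-1}+\bm h_{j+1}$.

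\textbf{Type $\|\bm s\|=\sqrt3$.} Here $\bm s=\bm h_j+\bm h_{j+1}$ for some $j$.

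\textbf{Type $\|\bm s\|=2$.} Here $\bm s=2\bm h_j=\bm h_j+\bm h_j$ for some $j$.

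In every type, $\bm s$ is the sum of two hexagon vertices. By the uniqueness of the pair $\{\bm a,\bm b\}$ with the given nonzero sum $\bm s$, this pair coincides with that pair of hexagon vertices, which proves the lemma. The case $\bm a+\bm b=\bm 0$ is excluded precisely because uniqueness fails there.

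The main step to verify carefully is the enumeration of the lattice points of norm at most $2$. I would check it directly from the norm form $m^2+mn+n^2$, whose values on $\Lambda$ below $5$ are $0,1,3,4$. The shortest vectors are the six $\bm h_j$; the norm-$\sqrt3$ vectors are the six sums $\bm h_j+\bm h_{j+1}$; the norm-$2$ vectors are the six $2\bm h_j$. Everything else is routine.
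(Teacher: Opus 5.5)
Your proposal is correct and follows essentially the same route as the paper. Both arguments place $\bm a+\bm b$ in the Eisenstein lattice with norm at most $2$, rule out squared norm $2$, and identify the sum as a sum of two hexagon vertices, using the uniqueness of a pair of unit vectors with a prescribed nonzero sum. The differences are minor: the paper excludes squared norm $2$ via $x^2+xy+y^2\equiv(x-y)^2\pmod 3$ and handles the three norm cases with separate arguments, whereas you apply the uniqueness fact uniformly and rely on a finite enumeration of the norm form.
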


\begin{proof}
Without loss of generality, identify the plane $\R^2$ with the complex
plane $\mathbb{C}$ and assume that the regular hexagon is centered at the
origin. After a rotation, we may suppose that its vertices are
$\Omega=\{\omega^j\mid j=0,1,\ldots,5\}$, where $\omega=e^{i\pi/3}$.
Notice that $\Omega$ is precisely the set of elements of norm $1$ in the
ring of Eisenstein integers
$\mathbb{Z}[\omega]=\{x+y\omega\mid x,y\in\mathbb{Z}\}$. In particular,
$\mathbb{Z}[\omega]$ is closed under addition.

Since $\bm z_1,\ldots,\bm z_k\in\Omega$, their sum
$S=\bm z_1+\cdots+\bm z_k$ belongs to $\mathbb{Z}[\omega]$. By the given
condition, $\bm a+\bm b=-S$, and hence
$\bm v:=\bm a+\bm b\in\mathbb{Z}[\omega]$. Since $\bm a$ and $\bm b$ are
unit vectors, the triangle inequality gives $\lVert\bm v\rVert\le2$.
As $\bm v\neq\bm0$, we have
$\lVert\bm v\rVert^2\in\{1,2,3,4\}$. Moreover, for every $x+y\omega\in\mathbb{Z}[\omega]$,
$\lVert x+y\omega\rVert^2=x^2+xy+y^2\equiv(x-y)^2\pmod3$.
Since a square is congruent to either $0$ or $1$ modulo $3$, we have
$\lVert\bm v\rVert^2\not\equiv2\pmod3$. Hence
$\lVert\bm v\rVert^2\neq2$, and we are left with three cases.

\medskip
\noindent\textbf{Case 1: $\lVert\bm v\rVert^2=1$.}
In this case, $\bm v$ is a unit element of $\mathbb{Z}[\omega]$, and hence
$\bm v\in\Omega$. Since $\bm a$ and $\bm b$ are unit vectors whose sum is
the unit vector $\bm v$, the angle between $\bm a$ and $\bm b$ is
$2\pi/3$. Equivalently, $\bm a$ and $\bm b$ are obtained from $\bm v$
by rotations through angles $\pm\pi/3$. Multiplication by
$e^{\pm i\pi/3}=\omega^{\pm1}$ permutes $\Omega$, and therefore
$\bm a,\bm b\in\Omega$.

\medskip
\noindent\textbf{Case 2: $\lVert\bm v\rVert^2=3$.}
Here, $\lVert\bm v\rVert=\sqrt{3}$. The only way two unit vectors can sum to a vector
of length $\sqrt{3}$ is for the angle between them to be $\pi/3$ (since
$1^2+1^2+2\cos(\pi/3)=3$). Thus, $\bm a$ and $\bm b$ are separated by
$\pi/3$, which is exactly the angle between adjacent vertices of the regular
hexagon. Solving $x^2+xy+y^2=3$ for integers $x,y$, we find exactly six
possible values for $\bm v=x+y\omega$, namely
$\pm(1+\omega)$, $\pm(-1+2\omega)$, and $\pm(-2+\omega)$. Using the relation
$\omega^2=\omega-1$, three of these can be explicitly written as sums of
adjacent vertices in $\Omega$: $1+\omega=\omega^0+\omega^1$,
$-1+2\omega=\omega^1+\omega^2$, and
$-2+\omega=\omega^2+\omega^3$. The remaining three values are their
negations, corresponding to the sums of the antipodes of those adjacent
pairs (e.g., $-(1+\omega)=\omega^3+\omega^4$). Because the geometric
decomposition of a vector of length $\sqrt{3}$ into two unit vectors is
unique, $\bm a$ and $\bm b$ must be exactly the two adjacent vertices
involved in the sum. Hence, $\bm a,\bm b\in\Omega$.

\medskip
\noindent\textbf{Case 3: $\lVert\bm v\rVert^2=4$.}
In this case, $\lVert\bm v\rVert=2$. Since $\bm a$ and $\bm b$ are unit vectors, they
can only sum to a vector of length $2$ if $\bm a=\bm b$, yielding
$\bm v=2\bm a$. This implies $\bm a=\bm v/2$. Let $\bm v=x+y\omega$.
Since $x^2+xy+y^2=4$, reducing modulo $2$ shows that $x$ and $y$ must both
be even integers. Therefore,
$\bm v/2=(x/2)+(y/2)\omega\in\mathbb{Z}[\omega]$. Since
$\lVert\bm v/2\rVert^2=1$, we have $\bm a=\bm v/2\in\Omega$. Consequently,
$\bm b=\bm a\in\Omega$.

In all three cases, $\bm a,\bm b\in\Omega$. This completes the proof.
\end{proof}

\begin{lemma}\label{lem:crystal-contraction}
Let $\mathcal C=\mathcal P(u,v)+uv$ be an odd crystal. For every edge
$e\in E(\mathcal C)$, the graph $\mathcal C/e$ admits a nowhere-zero
$3$-flow.
\end{lemma}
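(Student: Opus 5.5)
Our plan is to reduce $\mathcal C/e$ to a single vertex by repeatedly contracting digons, using Theorem~\ref{thm:Z3-connected}. A single vertex trivially admits a nowhere-zero $3$-flow, so this suffices. The key structural fact is that triangles of a triangle-path collapse to digons after contracting one of their edges, and the collapse then propagates along the chain of triangles.

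\textbf{Case 1: $e$ lies in some triangle $T_j$ of $\mathcal P$.} Let $T_1,\dots,T_m$ be the ordered triangles of $\mathcal P$. After contracting $e\in T_j$, the other two edges of $T_j$ become parallel and form a digon. Contracting that digon identifies all three vertices of $T_j$. Then $T_{j+1}$, which shares an edge with $T_j$, has two of its vertices already identified. Its remaining two edges therefore form a digon, and we contract it. Repeating this forward to $T_m$ and backward to $T_1$ shrinks all of $\mathcal P$ to one vertex. The edge $uv$ becomes a loop and is deleted. By Theorem~\ref{thm:Z3-connected}, applied at each step in reverse, $\mathcal C/e$ admits a nowhere-zero $3$-flow.

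\textbf{Case 2: $e=uv$.} After contracting $uv$, the graph $\mathcal C/e$ is $\mathcal P$ with its two leaves identified into a vertex $x$, and no triangle of $\mathcal P$ collapses. We need another starting digon or even wheel, and this is where oddness is used. Observation~\ref{obs:geometric-triangle-path} describes $u_1$: its neighbours $u$, $u_2$ and the internal vertices of $Q_1$ form a fan, so $u_1$ with these neighbours forms a triangle-path fan.

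Our first attempt is to find a vertex $w$ adjacent to both $u$ and $v$. If one exists, then $x,w$ are joined by two edges in $\mathcal C/e$, giving a digon. Contracting it creates a triangle containing a contracted vertex, and the propagation argument of Case~1 then collapses everything.

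If no common neighbour exists, we use the alternative route suggested by Tutte's flow-polynomial relation used in Lemma~\ref{lem:odd-wheel-minus-edge}. It suffices to show that $\mathcal C-e=\mathcal P$ has a nowhere-zero $3$-flow or that $\mathcal C$ itself does. Since $\mathcal C$ is an odd crystal, it presumably has no nowhere-zero $3$-flow; we would confirm this through Lemma~\ref{lem:regular-hexagon-extension}-type parity arguments, but it is really only needed as the relation is used. Hence $\mathcal C/e$ has one if and only if $\mathcal P=\mathcal C-e$ has one. The triangle-path $\mathcal P$ is contractible to a point via digons (Case~1 starting from any edge). So $\mathcal P$ admits a nowhere-zero $3$-flow, and therefore so does $\mathcal C/uv$.

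This last argument is uniform. In fact it handles every $e$ once we know that $\mathcal C$ has no nowhere-zero $3$-flow: for every edge, $\mathcal C-e$ is then either $\mathcal P$ or a graph whose contraction we already handled.

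\textbf{Main obstacle.} The hard step is establishing cleanly that the odd crystal $\mathcal C$ has no nowhere-zero $3$-flow, which the Tutte-relation route needs. We would try a $\mathbb Z_3$ counting argument: in any $\mathbb Z_3$-flow, the propagation of digon-type constraints along the triangles forces a fixed value pattern. The odd degrees of all vertices then give a parity contradiction on the closing edge $uv$.

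Failing that, the direct Case~1 argument already covers all $e\neq uv$. For $e=uv$ we would instead verify directly that $\mathcal P$ has a $\mathbb Z_3$-flow with $\partial=0$ everywhere and lift it. Indeed $\mathcal C/uv$ contains $\mathcal P$ with $u,v$ merged, so any nowhere-zero $3$-flow of $\mathcal P$ is directly a flow of $\mathcal C/uv$. This avoids the obstacle entirely and makes Case~2 immediate.
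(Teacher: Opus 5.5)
Your Case 1 is correct and matches the paper's argument. Your reduction of Case 2 to showing that $\mathcal P$ admits a nowhere-zero $3$-flow is also the paper's route: as you observe at the end, a flow of $\mathcal P=\mathcal C-uv$ is already a flow of $\mathcal C/uv$, so the Tutte-relation detour and the question of whether $\mathcal C$ has a nowhere-zero $3$-flow are irrelevant.

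The gap is in the one step that carries the content of Case 2, namely the claim that $\mathcal P$ is ``contractible to a point via digons (Case 1 starting from any edge)''. Case 1 starts by contracting an arbitrary triangle edge $e$, which is not a digon. Theorem~\ref{thm:Z3-connected} lets you pull a flow back only through contractions of digons or even wheels. A general edge contraction transfers flows only forward (from $G$ to $G/e$). So this argument shows that $\mathcal P/e$ has a nowhere-zero $3$-flow, not that $\mathcal P$ does. The same reasoning applied to $K_4$ (contract one edge, then collapse digons) would ``prove'' that $K_4$, itself an odd crystal, has a nowhere-zero $3$-flow, which is false. Since $\mathcal P$ is simple, there is no digon to start from. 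Your fallback (``verify directly that $\mathcal P$ has a $\mathbb Z_3$-flow'') is never carried out. The common-neighbour subcase is valid, but it covers only special crystals such as odd wheels.

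The missing idea is how to create the first genuine digon; the paper does this by suppressing the $2$-vertices of $\mathcal P$. A leaf $u$ lies in exactly one triangle $uu_1y$, so its two neighbours are adjacent. Suppressing $u$ therefore produces an edge parallel to $u_1y$, which is a digon. Suppressing a $2$-vertex preserves the existence of a nowhere-zero $3$-flow in both directions, because the two edges at a $2$-vertex carry the same value. After this step your Case 1 propagation legitimately collapses the graph to a single vertex. Theorem~\ref{thm:Z3-connected} then gives a nowhere-zero $3$-flow of $\mathcal P$, and hence of $\mathcal C/uv$. With this step inserted, your proof is complete.
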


\begin{proof}
If $e\neq uv$, then the edge $e$ belongs to a triangle of the triangle-path $\mathcal{P}(u,v)$. Contracting $e$  results in  a digon. By successively  contracting  digons, $\mathcal{C}/e$ can be reduced to a single vertex. Thus $\mathcal{C}/e$ admits a nowhere-zero $3$-flow by Theorem~\ref{thm:Z3-connected}.

If $e=uv$, then $\mathcal{C}/e$  can be obtained from $\mathcal{P}(u,v)$ by
identifying $u$ and $v$. Let $G_1$ be the graph obtained from
$\mathcal{P}(u,v)$ by suppressing its $2$-vertices. Then $G_1$ can
be reduced to a single vertex by successively contracting digons. By  Theorem~\ref{thm:Z3-connected},  $G_1$ admits a nowhere-zero
$3$-flow and so do $\mathcal{P}(u,v)$ and $\mathcal{C}/e$. 
\end{proof}

\begin{corollary}\label{cor:crystal-wheel-sum}
Suppose that $\mathcal{C}=\mathcal{P}(u,v)+uv$ is an odd crystal. For every
$e\in E(\mathcal C)$ and every wheel $W$ with a distinguished edge also
denoted by $e$, the $2$-sum $G:=\mathcal{C}\oplus_e W$ admits an
$S^1$-flow.
\end{corollary}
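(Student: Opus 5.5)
We plan to apply Theorem~\ref{thm:intro-2-sum-general}\ref{thm:intro-2-sum-general:iv} to $G=\mathcal C\oplus_e W$, with $G_1=\mathcal C$ and $G_2=W$. This requires checking two hypotheses: that both summands are bridgeless, and that both $\mathcal C/e$ and $W/e$ admit nowhere-zero $3$-flows.

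\emph{Bridgelessness.} A wheel is $2$-connected, hence bridgeless. For an odd crystal, every edge other than $uv$ lies in a triangle of $\mathcal P(u,v)$. The edge $uv$ lies on the cycle formed by $uv$ together with any $(u,v)$-path in the connected graph $\mathcal P(u,v)$. So $\mathcal C$ is bridgeless as well.

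\emph{Contractions.} By Lemma~\ref{lem:crystal-contraction}, $\mathcal C/e$ admits a nowhere-zero $3$-flow for every edge $e$ of $\mathcal C$. For $W/e$, we argue as in the proof of Proposition~\ref{prop:2-sum-wheel}. If $W$ is an even wheel, it admits a nowhere-zero $3$-flow, and contraction preserves flows, so $W/e$ does too. If $W$ is an odd wheel, Lemma~\ref{lem:odd-wheel-minus-edge}\ref{lem:odd-wheel-minus-edge:i} gives the $3$-flow on $W/e$ directly.

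With both hypotheses verified, Theorem~\ref{thm:intro-2-sum-general}\ref{thm:intro-2-sum-general:iv} yields an $S^1$-flow of $G$. Alternatively, once we know $\mathcal C$ is bridgeless and $\mathcal C/e$ has a nowhere-zero $3$-flow, Proposition~\ref{prop:2-sum-wheel} applies verbatim with $G_1=\mathcal C$.

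The only point needing some care is that $\mathcal C$ is bridgeless, which is what Lemma~\ref{lem:sqrt3-preflow} needs inside part~\ref{thm:intro-2-sum-general:iv}. As noted above, this is immediate from the triangle structure.
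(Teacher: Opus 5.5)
Your proof is correct and matches the paper's. The paper combines Lemma~\ref{lem:crystal-contraction} with the bridgelessness of $\mathcal C$ and invokes Proposition~\ref{prop:2-sum-wheel}, which is your alternative route; your main route just unpacks that proposition into Theorem~\ref{thm:intro-2-sum-general}\ref{thm:intro-2-sum-general:iv}, and your check that $uv$ closes a cycle with a $(u,v)$-path of $\mathcal P(u,v)$ justifies a bridgelessness claim the paper states without proof.
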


\begin{proof}
By Lemma~\ref{lem:crystal-contraction}, $\mathcal{C}/e$ admits a
nowhere-zero $3$-flow. Since $\mathcal C$ is bridgeless,
Proposition~\ref{prop:2-sum-wheel} implies
that $G=\mathcal{C}\oplus_e W$ admits an $S^1$-flow.
\end{proof}

We now establish the following  property of odd crystals.

\begin{lemma}\label{bullgrowth}
Suppose that $\mathcal{C}=\mathcal{P}(s,t)+st$ is an odd crystal. Then one
of the following holds:
\begin{enumerate}[label=(\arabic*)]
    \item $\mathcal{B}\uplus\mathcal{C}$ contains no spanning triangle-tree;
    \item $\mathcal{B}\uplus\mathcal{C}$ is also an odd crystal;
    \item $\mathcal{B}\uplus\mathcal{C}$ admits an $S^1$-flow.
\end{enumerate}
\end{lemma}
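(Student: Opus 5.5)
The plan is a case analysis on where the bull is attached. Let $G=\mathcal B\uplus\mathcal C$ with new adjacent $3$-vertices $u,v$, $N_G(u)=\{v,w,a\}$ and $N_G(v)=\{u,w,b\}$, so that $\mathcal C=G-u-v+ab$ (or $G-u-v$ if $a=b$). We assume that $G$ contains a spanning triangle-tree $\mathcal T$, since otherwise (1) holds, and aim to show that (2) or (3) holds.

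\textbf{Step 1: how $\mathcal T$ meets the bull.} Since $u$ and $v$ are $3$-vertices and $\mathcal T$ is spanning, each of $u,v$ lies in a triangle of $\mathcal T$. The only triangles of $G$ through $u$ or $v$ are $uvw$, $uva$ (when $a=b$), or $uaw$/$vbw$ (when $wa$ or $wb$ is an edge of $\mathcal C$). Hence $\mathcal T$ contains $uvw$, or it reaches $u,v$ through triangles on $wa$ or $wb$. In particular, $w$ is adjacent in $\mathcal C$ to $a$ or to $b$, or $a=b$. This restricts the position of $\{a,b,w\}$ relative to the triangle-path $\mathcal P(s,t)$, which we then analyse using the outerplane embedding of Observation~\ref{obs:geometric-triangle-path}.

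\textbf{Step 2: the ``crystal-preserving'' case.} Suppose $a\ne b$ and $abw$ is a triangle of $\mathcal P(s,t)$ with $ab$ on the rim $C_{\mathcal P}$ (or $ab=st$). Then $G$ is obtained by replacing $abw$ with the triangles $auw$, $uvw$, $vbw$. This is again a triangle-path plus the edge joining its leaves, so $G$ is a crystal. For the parities: $a$ and $b$ keep their degrees, $w$ gains $2$, and $u,v$ have degree $3$. So $G$ is an odd crystal and (2) holds.

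\textbf{Step 3: the remaining configurations.} In every other configuration allowed by Step~1, we aim for (3). Here $ab$ may be a chord, $w$ may not lie on a triangle with $ab$, or $a=b$. The first tool is $2$-sums. By Lemma~\ref{lem:crystal-contraction} and Tutte's flow polynomial theorem, $\mathcal C-e$ and $\mathcal C/e$ admit nowhere-zero $3$-flows for every edge $e$. When $a=b$ and $aw\in E(\mathcal C)$, the graph $\mathcal C+aw$ contains a digon, so it has a nowhere-zero $3$-flow by Theorem~\ref{thm:Z3-connected}. Then $G=(K_4\oplus_{aw}(\mathcal C+aw))-aw$ admits an $S^1$-flow by Theorem~\ref{thm:intro-2-sum-general}\ref{thm:intro-2-sum-general:i}. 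When $a\ne b$, we try to exhibit $G$ as a $2$-sum of $\mathcal C$ with a wheel along an edge. This happens when the triangles on $w$ together with $u,v$ form a wheel sharing an edge with the rest, and then Corollary~\ref{cor:crystal-wheel-sum} gives an $S^1$-flow. If no such decomposition exists, we fall back on a direct construction: take the nowhere-zero $3$-flow of $\mathcal C-ab$ and the Two-Terminal Preflow Lemma~\ref{lem:sqrt3-preflow} with terminals among $a,b,w$, and choose unit values on the five bull edges to cancel the boundaries.

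\textbf{Main obstacle.} The hard part is this last subcase of Step~3. The bull edges alone route only a unit-norm vector between $a$ and $b$, whereas the preflow lemma produces boundary $\pm\sqrt3\,\mathbf u$. So the vertex $w$ must act as a third terminal, and the two preflows on $\mathcal C-ab$ must be superposed with chosen directions. To make this work, we would exploit the $3$-vertices near $a,b,w$ given by Observation~\ref{obs:geometric-triangle-path}\ref{obs:geometric-triangle-path:no-4}. We would also use Lemma~\ref{lem:regular-hexagon-extension} to control which boundary vectors are attainable, so that any failure is confined to configurations already covered by Step~2.
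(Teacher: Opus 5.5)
\textbf{There is a genuine gap in the case $a\ne b$.} You never use the edge count, which settles this case outright, and as a result Step~3 aims at a conclusion that is false. The crystal $\mathcal C$ has $2n-2$ edges on $n$ vertices. So for $a\ne b$ the graph $G$ has $n+2$ vertices and $2n+2=2|V(G)|-2$ edges, whereas a spanning triangle-tree $T$ has $2|V(G)|-3$ edges; hence $T=G-xy$ for a single edge $xy$. Every vertex of $G$ has odd degree: $a,b$ keep their degrees, $w$ gains two, and $u,v$ are $3$-vertices. So every vertex other than $x,y$ has odd degree in $T$, and since a triangle-tree has at least two $2$-vertices, $x$ and $y$ are exactly its leaves. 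Thus $T$ is a triangle-path and $G=T+xy$ is an odd crystal.

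In other words, for $a\ne b$ either (1) or (2) holds in every configuration, and your Step~2 is just one instance of this. Every other $a\ne b$ configuration with a spanning triangle-tree is sent to your Step~3 to produce an $S^1$-flow. But there $G$ is an odd crystal, and odd crystals admit no $S^1$-flow (first half of the proof of Theorem~\ref{thm:s1-characterization}\ref{thm:s1-characterization:crystal}). Such configurations do occur. If $\mathcal C$ is not a wheel, then $s$ and $t$ have no common neighbour. Growing a bull on $ab=st$ with $w\in N(s)\setminus\{t\}$ then yields the spanning triangle-path $uvw,\,suw,\,T_1,\ldots,T_m$, where $T_1\ni s$ are the triangles of $\mathcal P(s,t)$; its leaves are $v,t$, and $vt\in E(G)$. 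Step~2 does not cover this, because $stw$ is not a triangle. No combination of $2$-sums, Lemma~\ref{lem:sqrt3-preflow} and Lemma~\ref{lem:regular-hexagon-extension} can succeed there. In the remaining $a\ne b$ configurations there is no spanning triangle-tree, so (1) already holds and nothing needs to be constructed.

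\textbf{The case $a=b$ also needs repair.} For $aw\in E(\mathcal C)$ you apply Theorem~\ref{thm:intro-2-sum-general}\ref{thm:intro-2-sum-general:i} with $K_4$ as a summand. But part (i) requires both summands to admit $S^1$-flows, and $K_4$, being an odd wheel, does not. Instead, note that $G=\mathcal C\oplus_{aw}K_4$ itself and apply Corollary~\ref{cor:crystal-wheel-sum}. That corollary rests on part \ref{thm:intro-2-sum-general:iv}, using that both $\mathcal C/aw$ and $K_4/aw$ have nowhere-zero $3$-flows.

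The case $a=b$, $aw\notin E(\mathcal C)$ is not treated at all, and there (1) holds. The only triangles through $u$ or $v$ are $uva$ and $uvw$, and each contains both $u$ and $v$. So neither of $u,v$ can be the first of the two attached to a growing triangle-tree, and both lie in its initial triangle, which is $uvw$ or $uva$. The next vertex added must be the remaining one of $a,w$. After that, every edge of the tree meets $\{u,v\}$, and $u,v$ have no further neighbours, so no vertex can be attached. Since $|V(G)|\ge 6$, no spanning triangle-tree exists.
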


\begin{proof}
Let $G=\mathcal{B}\uplus\mathcal{C}$, and recall the notation from
Definition~\ref{def:bull-reduction}. In particular, $u$ and $v$ are the two
new adjacent $3$-vertices introduced by the bull-growth operation, $w$ is
their common neighbor, and $a$ and $b$ are the neighbors of $u$ and $v$,
respectively, other than $w$ and each other. If $a\ne b$, then the
bull-growth operation replaces the edge $ab$ of $\mathcal{C}$ by the bull
configuration; if $a=b$, then the edge $ab$ does not exist in $\mathcal{C}$.

Note that, by this construction, every vertex of $G$ has odd degree. We note that a triangle-tree $T_0$ has
$2|V(T_0)|-3$ edges and  at least two
$2$-vertices.

Suppose that (1) fails, and let $T$ be a spanning triangle-tree of $G$.  Put $n=|V(\mathcal C)|$, so
$|E(\mathcal C)|=2n-2$.

First suppose that $a\ne b$. Then $|V(G)|=n+2$ and
$|E(G)|=2n+2=2|V(G)|-2$, whereas $|E(T)|=2|V(G)|-3$. Thus
$T=G-xy$ for some edge $xy$. Every vertex other than $x$ and $y$ has odd
degree in $T$, so $x$ and $y$ are the only two $2$-vertices of $T$, implying that $T$ is a triangle-path. Thus
$G=T+xy$ is an odd crystal, and (2) holds.

Now suppose that $a=b$. If $aw\in E(\mathcal C)$, then
$G=\mathcal C\oplus_{aw}K_4$ and thus admits an $S^1$-flow  by Corollary~\ref{cor:crystal-wheel-sum}. Hence, (3) holds.

It remains to consider the case $a=b$ and $aw\notin E(\mathcal C)$.
The only triangles of $G$ containing $u$ or $v$ are $uva$ and $uvw$. By the definition
of a triangle-tree, $T$ can be obtained from a triangle $T_0=K_3$ by
successively adding a new vertex adjacent to the two ends of an
existing edge.

We first show that $u,v\in V(T_0)$. If both $u$ and $v$ are not in
$T_0$, then, without loss of generality, suppose that $u$ is added
before $v$. Let $T'$ be the triangle-tree just before $u$ is added.
Then $u$ is adjacent to the two ends $x,y$ of some edge $xy\in E(T')$,
and hence $uxy$ is a triangle of $G$. Since $v$ has not yet been added,
$v\notin\{x,y\}$, contradicting the fact that every triangle of $G$
containing $u$ also contains $v$. On the other hand, if $u\in V(T_0)$,
then the triangle $T_0$ contains $u$ and therefore also $v$. Hence
$u,v\in V(T_0)$.

Since $u$ and $v$ are adjacent, we may assume that
$T_0=uvw$; in particular, $a$ is the only vertex outside $T_0$ that is
adjacent to two vertices of $T_0$, namely $u$ and $v$. Thus $a$ must be
the next vertex added to $T_0$. Let $T_1$ be the resulting
triangle-tree. After adding $a$, no vertex of $\mathcal C\setminus
\{a,w\}$ is adjacent to both ends of an edge of $T_1$. Hence no further
vertex can be added to $T_1$. This contradicts the fact that $T$ is
spanning, since $\mathcal C$ has at least four vertices.
\end{proof}

\subsection{Proof of Theorem~\ref{thm:s1-characterization}
\ref{thm:s1-characterization:crystal}}

Li et al.~\cite{LLW20} completely characterized graphs with a spanning
triangle-tree that admit a nowhere-zero $3$-flow. In particular, they showed
that no odd crystal admits a nowhere-zero $3$-flow.

\begin{theorem}[\cite{LLW20}]\label{llw}
Let $G$ be a graph containing a spanning triangle-tree.
Then $G$ does not admit a nowhere-zero $3$-flow if and only if either
$G=K_4$, or
$G=\mathcal{B}\uplus G_1$, where $G_1$ also contains a spanning
triangle-tree and does not admit a nowhere-zero $3$-flow. In particular,  no odd crystal admits a nowhere-zero $3$-flow.
\end{theorem}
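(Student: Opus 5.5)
The plan is to treat the two directions separately, working throughout with nowhere-zero $\Z_3$-flows, which is legitimate by Tutte's equivalence.

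\textbf{Sufficiency.} Suppose $G=K_4$ or $G=\mathcal{B}\uplus G_1$ with $G_1$ having no nowhere-zero $3$-flow; we show $G$ has none.
\begin{itemize}
\item $K_4$ is cubic and not bipartite, so it has no nowhere-zero $3$-flow.
\item For the bull-growth, assume $G$ has a nowhere-zero $\Z_3$-flow $(D,\phi)$. At a $3$-vertex, a nowhere-zero $\Z_3$-flow must send the same value out along all three incident edges (after reorienting so that all values are $1$). Orient $uv$ from $u$ to $v$ with value $1$. Then $u$ sends value $1$ along $ua$ and $uw$, and $v$ receives value $1$ along $wv$ and $bv$.
\item At $w$ the contributions of $uw$ and $wv$ cancel. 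The only net effect of the bull is therefore one unit entering $a$ and one unit leaving $b$.
\item If $a\ne b$, replace the bull by the edge $ab$ oriented from $b$ to $a$ with value $1$. If $a=b$, the net effect at $a$ is zero, so simply delete the bull.
\item In either case we obtain a nowhere-zero $\Z_3$-flow of $G_1$, a contradiction.
\end{itemize}
Since odd crystals are built from $K_4$ by bull-growths (the construction behind Lemma~\ref{bullgrowth}), the final sentence of the theorem follows from this direction by induction.

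\textbf{Necessity.} Take a counterexample $G$ with $|V(G)|$ minimum: $G$ has a spanning triangle-tree $T$, has no nowhere-zero $3$-flow, is not $K_4$, and is not a bull-growth of such a graph. The reduction steps are as follows.
\begin{itemize}
\item By Theorem~\ref{thm:Z3-connected}, contracting a digon or an even wheel preserves the existence of a nowhere-zero $3$-flow. Contracting an edge of a triangle of $T$ keeps a spanning triangle-tree.
\item $G$ has no parallel edges. Otherwise contract the resulting digon; contracting a suitable edge first gives a smaller graph of the same class, and minimality yields a contradiction.
\item $G$ contains no even wheel, for the same reason.
\item $G$ has no $2$-vertex. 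A $2$-vertex $x$ of $T$ with $d_G(x)=2$, adjacent to $y,z$ with $yz\in E(T)$, can be suppressed to create a digon.
\item $G$ has no vertex of even degree $\ge 4$ in the relevant local configuration. Here I would use a splitting argument in the spirit of the proof of Theorem~\ref{thm:degree4} to obtain a smaller graph.
\end{itemize}
Next, examine the end of the construction sequence of $T$. Take a leaf $x$ of $T$, added last onto an edge $yz$. With the previous reductions, $x$ must be a $3$-vertex whose third edge goes outside the triangle $xyz$. One then shows that the predecessor of $x$ in $T$, say $y$, is also a $3$-vertex in $G$. Then $x,y$ with common neighbour $z$ form exactly the bull configuration of Definition~\ref{def:bull-reduction}.

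The bull-reduction $G_1$ still has a spanning triangle-tree: $T-x-y$, with the edge $ab$ added if needed. Since $G$ has no nowhere-zero $3$-flow, neither does $G_1$: a flow of $G_1$ lifts to $G$ by reversing the computation in the sufficiency direction, routing the value on $ab$ through the bull. This makes $G$ a bull-growth of a graph of the required kind, which contradicts the choice of $G$.

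The main obstacle is this last structural step: forcing the configuration near a leaf of $T$ to be exactly a bull. This needs a careful case analysis on the degrees of the leaf and of its predecessor, using the digon, even-wheel and splitting reductions to eliminate every alternative.
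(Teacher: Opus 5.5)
The paper does not prove this statement; it quotes it from \cite{LLW20}, so your argument has to stand on its own.

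\textbf{What works.} Your sufficiency direction is correct. After reorientation, a nowhere-zero $\mathbb{Z}_3$-flow is constant at every $3$-vertex, with all three edges outward or all inward. Hence the bull transfers a single unit between $a$ and $b$, and can be replaced by one edge $ab$, or deleted if $a=b$. Together with the fact that the non-bipartite cubic graph $K_4$ has no nowhere-zero $3$-flow, this settles the ``if'' part.

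For the last sentence, however, Lemma~\ref{bullgrowth} points the wrong way. It describes bull-growths \emph{of} odd crystals, whereas you need every odd crystal $\mathcal C=\mathcal P(s,t)+st\ne K_4$ to be a bull-growth of a smaller odd crystal. This is easy but must be proved. Let $su_1x$ and $xu_1y$ be the first two triangles of $\mathcal P$, with $u_1$ the first backbone vertex. Since $\mathcal P$ has no $4$-vertex, $u_1$ lies in at least four triangles. Hence $y$ is a leaf of $\mathcal P-s-x$, $y\ne t$, and $ty\notin E(\mathcal P)$. Thus $\mathcal C=\mathcal B\uplus(\mathcal P-s-x+ty)$, and $\mathcal P-s-x+ty$ is an odd crystal.

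\textbf{The gap.} The necessity direction, which is the whole content of \cite{LLW20}, is not proved. What you give is a list of intended reductions, and none of them closes.

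(i) In a minimal counterexample to this ``if and only if'' statement, a smaller graph $G'$ with a spanning triangle-tree and no nowhere-zero $3$-flow is, by minimality, $K_4$ or a bull-growth of such a graph. That is not a contradiction, and you never pull this structure back to $G$. One possible route is to first prove that every graph generated from $K_4$ in this way is simple with all degrees odd, and then use reductions that provably violate this.

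(ii) The reductions do not stay in the class.
\begin{itemize}
\item $T/e$ is not a triangle-tree unless $e$ lies in exactly one triangle of $T$. If $e$ lies in $c$ triangles, the simple graph underlying $T/e$ has $2|V(T)|-4-c$ edges; contracting the middle edge of $K_4^-$ leaves a path.
\item A digon of $G$ need not lie in $T$ at all.
\item Splitting a vertex increases $|V(G)|$, so your minimality does not apply, and the split graph need not keep a spanning triangle-tree.
\item Vertices of odd degree at least $5$ are never treated.
\end{itemize}

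(iii) Decisively, you only assert (``one then shows'') that a last-added leaf $x$ of $T$ and its neighbour $y$ are $3$-vertices of $G$. Edges of $E(G)\setminus E(T)$ may meet $x$ and $y$ in any number, and nothing you have established bounds $d_G(x)$ or $d_G(y)$. Forcing the bull configuration is exactly what a proof must do.

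As written, only the easy direction is proved, together with the final sentence once the fix above is added.
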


Combining Theorem~\ref{llw} with the structural results established in the
previous sections, we now prove Theorem~\ref{thm:s1-characterization}
\ref{thm:s1-characterization:crystal}.

\begin{proof}[{\bf Proof of Theorem~\ref{thm:s1-characterization}
\ref{thm:s1-characterization:crystal}}]
We first prove that no odd crystal admits an $S^1$-flow.

Suppose, to the contrary, that 
$G=\mathcal{P}(s,t)+st$  is a counterexample with $|E(G)|$ minimum. Let
$(D,f)$ be an $S^1$-flow of $G$. Note that $K_4$ is an odd wheel, so,  by Lemma~\ref{lem:odd-wheel-minus-edge}\ref{lem:odd-wheel-minus-edge:ii}, it does not admit an $S^1$-flow.  Thus
$G\neq K_4$, and consequently $|V(G)|\geq5$.

Let $u_1\cdots u_k$ be the backbone in the outerplane embedding of
$\mathcal P=\mathcal P(s,t)$, and put $u_0=s$ and $u_{k+1}=t$.
Since $G$ is an odd crystal, the two leaves of $\mathcal P$ have degree
two and every other vertex has odd degree. In particular, $\mathcal P$
has no $4$-vertex. We may therefore use all four parts of
Observation~\ref{obs:geometric-triangle-path}; retain its notation $Q_i$. Define
$\Omega=\{\pm f(e):e\in\delta_G(s)\}$. Since $s$ is a $3$-vertex,
Observation~\ref{ob:pi/3} implies that $\Omega$ is the vertex set of a
regular hexagon inscribed in the unit circle. Call an edge of $G$ an
\emph{$\Omega$-edge} if its flow value belongs to $\Omega$. Let
$G_\Omega$ be the subgraph of $G$ induced by all $\Omega$-edges, and let
$G_\Omega(s)$ be the component of $G_\Omega$ containing $s$.

We first show that $G$ contains an edge that is not an $\Omega$-edge.
Suppose, otherwise, that all flow values belong to $\Omega$. Choose a set
$R$ of three vertices of $\Omega$ whose pairwise angular distances are
$2\pi/3$. Then $\Omega=R\cup(-R)$.  For every edge whose value lies in $-R$, reverse its orientation and replace its value by its negative. All flow values
now lie in $R$. Multiplying all
values by a suitable orthogonal matrix then yields an $R_3$-flow,
where $R_3$ is the set of cube roots of unity.  By Theorem~\ref{thm:3-R3-S1-flow}, $G$ admits a nowhere-zero
$3$-flow, contradicting Theorem~\ref{llw}. Therefore, $G$
contains an edge that is not an $\Omega$-edge.

The following fact is an immediate corollary of Lemma~\ref{lem:regular-hexagon-extension}. It will be  repeatedly applied in the remainder of the proof.

\medskip \noindent
{\bf Fact}  If a $3$-vertex is incident with an
$\Omega$-edge, then all three incident edges are $\Omega$-edges.

\medskip

Since $st$ is an $\Omega$-edge, the vertices $s$ and $t$ lie in the same
component $G_\Omega(s)$. Every vertex outside the backbone is a $3$-vertex,
and thus some backbone vertex is incident with a non-$\Omega$-edge. Let $q$
be the smallest index for which not every edge incident with $u_q$ is an
$\Omega$-edge.

We claim that $q\leq k-1$. Suppose, to the contrary, that $q=k$. Since
$t$ is a $3$-vertex incident with the $\Omega$-edge $st$, all edges
incident with $t$ are $\Omega$-edges. By the choice of $q$, every edge
incident with $u_{k-1}$ is also an $\Omega$-edge. Every edge incident
with $u_k$ is either $u_{k-1}u_k$ or is incident with a $3$-vertex
adjacent to $u_k$. Each such $3$-vertex is either adjacent to
$u_{k-1}$ or lies on $Q_k$, whose one endvertex is $t$. Applying
Lemma~\ref{lem:regular-hexagon-extension} successively to these
$3$-vertices, starting from $t$, we conclude that every edge incident
with $u_k$ is an $\Omega$-edge, contradicting the definition of $q$.
Hence $1\leq q\leq k-1$.

Let $x$ be the last internal vertex of $Q_q$ when traversed from
$u_{q-1}$ to $u_{q+1}$, and let $y$ be the first internal vertex of
$Q_{q+1}$ when traversed from $u_q$ to $u_{q+2}$. By
Observation~\ref{obs:geometric-triangle-path}\ref{obs:geometric-triangle-path:no-4}, both vertices
exist and have degree three. In particular, $x\ne s$ and
$y\ne u_{q+1}$.

All edges incident with $u_{q-1}$ are $\Omega$-edges. Applying
Lemma~\ref{lem:regular-hexagon-extension} successively to the
$3$-vertices along $Q_q$ shows that
every edge incident with $u_q$, except possibly $e_1=u_qu_{q+1}$ and
$e_2=u_qy$, is an $\Omega$-edge. Since $e_1$ and $e_2$ are distinct and,
by the definition of $q$, not all edges incident with $u_q$ are
$\Omega$-edges, at least one of $e_1$ and $e_2$ is not an $\Omega$-edge.

Without loss of generality, we assume that every edge incident with $u_q$ is directed
away from $u_q$. Then 
$\bm z_1+\cdots+\bm z_r+\bm a+\bm b=\bm0$, where
$\bm z_1,\ldots,\bm z_r\in\Omega$, $\bm a=f(e_1)$, and
$\bm b=f(e_2)$.

We claim that $\bm a+\bm b\neq\bm0$. Suppose otherwise. Split $u_q$ into
two vertices, one incident with $e_1$ and $e_2$, and the other incident with
all edges at $u_q$ other than $e_1$ and $e_2$. Since $\bm a+\bm b=\bm0$, the resulting graph $H$ inherits an $S^1$-flow
$(D',f')$ from $G$.

Notice that $x\neq s$. By the outerplane embedding of
$\mathcal{P}(s,t)$, $\{xu_{q+1},u_qu_{q+1},u_qy\}$ is a $3$-edge-cut of
$\mathcal{P}(s,t)$, and hence $xu_{q+1}$ is a cut edge of $H-st$.
Consequently, $\{st,xu_{q+1}\}$ is a nontrivial $2$-edge-cut of $H$. Let
$H_s$ and $H_t$ be the components of
$H-\{st,xu_{q+1}\}$ containing $s$ and $t$, respectively, and define
$G'=H_s+sx$. Then $G'$ is a crystal. Moreover, the splitting decreases the
degree of $u_q$ by $2$ and leaves the parity of every other vertex
unchanged. Since $G$ is an odd crystal, $G'$ is also an odd crystal.
Moreover, $|E(G')|<|E(G)|$.

The $S^1$-flow $(D',f')$ of $H$ induces an $S^1$-flow
$(D'',f'')$ of $G'$ as follows. For every edge other than the new edge
$sx$, let its orientation and flow value in $(D'',f'')$ be the same as
in $(D',f')$. If $st$ is oriented from $s$ to $t$ in $D'$, orient $sx$
from $s$ to $x$ and set $f''(sx)=f'(st)$. If $st$ is directed from $t$
to $s$ in $D'$, orient $sx$ from $x$ to $s$ and set
$f''(sx)=f'(st)$. Then $(D'',f'')$ is an $S^1$-flow of $G'$, contradicting the
minimality of $G$. 

Therefore,
$\bm a+\bm b\neq\bm0$. Applying
Lemma~\ref{lem:regular-hexagon-extension} to
$\bm z_1+\cdots+\bm z_r+\bm a+\bm b=\bm0$ now gives
$\bm a,\bm b\in\Omega$. Thus both $e_1$ and $e_2$ are $\Omega$-edges,
contradicting the choice of $u_q$. We conclude that no odd crystal admits
an $S^1$-flow.

\medskip
It remains to prove the converse. Suppose, to the contrary, that the
statement is false, and choose a counterexample $G$ with $|E(G)|$ minimum. Thus $G$ contains a spanning triangle-tree, is not an odd crystal,
and admits no $S^1$-flow. By Theorem~\ref{thm:3-R3-S1-flow}, $G$ also admits
no nowhere-zero $3$-flow.

Since $K_4$ is an odd crystal, $G\neq K_4$, and hence $|V(G)|\geq5$.
By Theorem~\ref{llw}, we have
$G=\mathcal{B}\uplus G_1$, where $G_1$ contains a spanning triangle-tree and
admits no nowhere-zero $3$-flow. By Theorem~\ref{thm:intro-bull-growth},
$G_1$ admits no $S^1$-flow. Since $|E(G_1)|<|E(G)|$, the minimality of $G$
implies that $G_1$ is an odd crystal.
Applying Lemma~\ref{bullgrowth} to the bull-growth
$G=\mathcal{B}\uplus G_1$, we obtain that at least one of the following
holds:
\begin{itemize}
\item $G$ contains no spanning triangle-tree;
\item $G$ is an odd crystal;
\item $G$ admits an $S^1$-flow.
\end{itemize}
Each of these contradicts the choice of $G$. This contradiction completes
the proof.
\end{proof}

\section{Concluding remarks}
\label{sec:concludingremarks}

\subsection{Incomparability of the two graph classes}
\label{sec:Triangularly connected graphs and spanning triangle-trees}

The class of triangularly connected graphs and the class of graphs containing
a spanning triangle-tree are incomparable. We exhibit infinite families
showing that each class contains graphs outside the other.

First clearly, every crystal $\mathcal{C} = \mathcal{P}(u,v) + uv$with at least $5$ vertices, which contains a spanning triangle-tree,   is not triangularly connected since the edge $uv$ is not contained in a triangle. 

Conversely, let $W_n$ be the wheel with hub $h$ and rim
$C_n=r_0r_1\cdots r_{n-1}r_0$, where $n\ge4$. Let $\widehat W_n$ be
obtained from $W_n$ by adding $2$-vertices
$x_0,x_1,\ldots,x_{n-1}$, where $x_i$ is adjacent to $r_i$ and
$r_{i+1}$, with subscripts taken modulo $n$. See
\Cref{fig:incomparable-b} for $\widehat W_5$.

Clearly, $\widehat W_n$ is triangularly connected. We claim that
$\widehat W_n$ contains no spanning triangle-tree. Suppose otherwise that
$T$ is a spanning triangle-tree of $\widehat W_n$. Since each $x_i$ belongs
to exactly one triangle, namely $x_ir_ir_{i+1}$, we must have
$r_ir_{i+1}\in E(T)$ for every $i$. Thus the rim
$r_0r_1\cdots r_{n-1}r_0$ is an induced cycle in $T$. Since $n\ge4$, this
contradicts the chordality of $T$.

\begin{figure}[htbp]
\centering

\begin{subfigure}[b]{0.48\textwidth}
\centering
\begin{tikzpicture}[
    scale=0.8,
    every node/.style={inner sep=1.2pt},
    vtx/.style={circle, fill=black, minimum size=4.5pt, inner sep=0pt}
]

\node[vtx, label=below:$x_1$] at (0,0)            (a) {};
\node[vtx, label=below:$x_3$] at (2,0)            (b) {};
\node[vtx, label=above:$x_2$] at (1,{sqrt(3)})    (c) {};
\node[vtx, label=above:$x_4$] at (3,{sqrt(3)})    (d) {};
\node[vtx, label=below:$x_5$] at (4,0)            (e) {};
\node[vtx, label=above:$x_6$] at (5,{sqrt(3)})    (f) {};

\draw (a)--(b)--(c)--(a);
\draw (b)--(c)--(d)--(b);
\draw (b)--(d)--(e)--(b);
\draw (d)--(e)--(f)--(d);

\draw[line width=0.5pt]
        (a) .. controls +(0.8,3.7) .. (f);

\end{tikzpicture}
\caption{A crystal with a spanning triangle-tree that is not triangularly
connected.}
\label{fig:incomparable-a}
\end{subfigure}
\hfill
\begin{subfigure}[b]{0.48\textwidth}
\centering
\begin{tikzpicture}[
    scale=0.5,
    every node/.style={inner sep=1.2pt},
    vtx/.style={circle, fill=black, minimum size=4.5pt, inner sep=0pt}
]

\node[vtx, label=above:$r_1$] at (90:2)   (r1) {};
\node[vtx, label=right:$r_2$] at (18:2)   (r2) {};
\node[vtx, label=below right:$r_3$] at (-54:2)  (r3) {};
\node[vtx, label=below left:$r_4$] at (-126:2) (r4) {};
\node[vtx, label=left:$r_0$] at (162:2)  (r0) {};
\node[vtx, label=below:$h$] at (0,0)     (h)  {};

\node[vtx, label=above right:$x_1$] at (54:3.4)   (x1) {};
\node[vtx, label=right:$x_2$]       at (-18:3.4)  (x2) {};
\node[vtx, label=below:$x_3$]       at (-90:3.4)  (x3) {};
\node[vtx, label=below left:$x_4$]  at (-162:3.4) (x4) {};
\node[vtx, label=above left:$x_0$]  at (126:3.4)  (x0) {};

\draw (r1)--(r2)--(r3)--(r4)--(r0)--(r1);
\draw (h)--(r1) (h)--(r2) (h)--(r3) (h)--(r4) (h)--(r0);

\draw (x1)--(r1) (x1)--(r2);
\draw (x2)--(r2) (x2)--(r3);
\draw (x3)--(r3) (x3)--(r4);
\draw (x4)--(r4) (x4)--(r0);
\draw (x0)--(r0) (x0)--(r1);

\end{tikzpicture}
\caption{A triangularly connected graph with no spanning triangle-tree.}
\label{fig:incomparable-b}
\end{subfigure}

\caption{Examples showing the incomparability of the two graph classes.}
\label{fig:incomparable}
\end{figure}

\subsection{A conjecture relating $S^1$-flows and $4$-flows}

All currently known examples of graphs that admit an $S^1$-flow but no
nowhere-zero $3$-flow also admit a nowhere-zero $4$-flow. These observations
motivate the following conjecture.

\begin{conjecture}\label{conj:s1-four}
If a graph $G$ admits an $S^1$-flow, then $G$ admits a nowhere-zero $4$-flow.
\end{conjecture}

Combining Theorem~\ref{thm:3-R3-S1-flow}, Lemma~\ref{lem:small-cut-s1-flow}, 
Theorem~\ref{thm:intro-2-sum-general}, the Four Color Theorem~\cite{Robertson1997}, and Jaeger's $4$-flow theorem~\cite{Jaeger1979} yields
restrictions on any minimal counterexample. If such a counterexample exists,
it must be nonplanar, noncubic, and essentially $4$-edge-connected. Moreover,
it must contain both a $3$-vertex and a $5^+$-vertex, but no $2$- or
$4$-vertices. Indeed, $2$-vertices can be suppressed directly, while each
$4$-vertex can first be split into two $2$-vertices, as demonstrated in the
proof of Theorem~\ref{thm:degree4}, after which the resulting $2$-vertices
can be suppressed.

\section*{Acknowledgments}
 Jiaao Li is partially supported by National Key Research and Development Program of China (No. 2022YFA1006400), National Natural Science Foundation of China (Nos. 12571371, 12222108), Natural Science Foundation of Tianjin (No. 24JCJQJC00130), and the Fundamental Research Funds for the Central Universities, Nankai University. Rong Luo is partially supported by a grant from  Simons Foundation (No. 839830).


\begin{thebibliography}{99}
\setlength\itemsep{-0.1cm}

\bibitem{DeVosXY2006}
M.~DeVos, R.~Xu, and G.~Yu,
\emph{Nowhere-zero $\mathbb{Z}_3$-flows through $\mathbb{Z}_3$-connectivity},
Discrete Math. \textbf{306} (2006), 26--30.

\bibitem{Fan2008}
G.-H.~Fan, H.-J.~Lai, R.~Xu, C.-Q.~Zhang, and C.-X.~Zhou,
\emph{Nowhere-zero $3$-flows in triangularly connected graphs},
J. Combin. Theory Ser. B \textbf{98} (2008), 1325--1336.

\bibitem{GMRR2025}
L.~Gáborik, S.~Kurz, G.~Mazzuoccolo, J.~Rajník, and F.~Rieg,
\emph{Manhattan and Chebyshev flows},
arXiv preprint arXiv:2510.22234, 2025.

\bibitem{Goddyn}
L.~A.~Goddyn, M.~Tarsi, and C.-Q.~Zhang,
\emph{On $(k,d)$-colorings and fractional nowhere-zero flows},
J. Graph Theory \textbf{28} (1998), 155--161.

\bibitem{HMM2026}
H.~Houdrouge, B.~Miraftab, and P.~Morin,
\emph{$2$-dimensional unit vector flows},
arXiv preprint arXiv:2602.21526, 2026.

\bibitem{Imrich2010}
W.~Imrich, I.~Peterin, S.~\v{S}pacapan, and C.-Q.~Zhang,
\emph{NZ-flows in strong products of graphs},
J. Graph Theory \textbf{64} (2010), 267--276.

\bibitem{Jaeger1979}
F.~Jaeger,
\emph{Flows and generalized coloring theorems in graphs},
J. Combin. Theory Ser. B \textbf{26} (1979), 205--216.

\bibitem{Jain2007}
K.~Jain,
\emph{Unit vector flows},
Open Problem Garden (2007).
Available at \url{http://www.openproblemgarden.org/op/unit_vector_flows}.

\bibitem{Lai2000}
H.-J.~Lai,
\emph{Group connectivity of $3$-edge-connected chordal graphs},
Graphs Combin. \textbf{16} (2000), 165--176.

\bibitem{Lai2003}
H.-J.~Lai,
\emph{Nowhere-zero $3$-flows in locally connected graphs},
J. Graph Theory \textbf{42} (2003), 211--219.

\bibitem{LLLS2026}
C.~Li, J.~Li, R.~Luo, and B.~Su,
\emph{High-dimensional $p$-normed flows},
arXiv preprint arXiv:2601.12036, 2026.

\bibitem{LLW20}
J.~Li, X.~Li, and M.~Wang,
\emph{Spanning triangle-trees and flows of graphs},
Graphs Combin. \textbf{36} (2020), 1797--1814.

\bibitem{mattiolo2023d}
D.~Mattiolo, G.~Mazzuoccolo, J.~Rajník, and G.~Tabarelli,
\emph{On $d$-dimensional nowhere-zero $r$-flows on a graph},
European J. Math. \textbf{9} (2023), 101.

\bibitem{mattiolo2024lower}
D.~Mattiolo, G.~Mazzuoccolo, J.~Rajník, and G.~Tabarelli,
\emph{A lower bound for the complex flow number of a graph: A geometric approach},
J. Graph Theory \textbf{106} (2024), 239--256.

\bibitem{mattiolo2025geometric}
D.~Mattiolo, G.~Mazzuoccolo, J.~Rajník, and G.~Tabarelli,
\emph{Geometric description of $d$-dimensional flows of a graph},
Australas. J. Combin. \textbf{94} (2026), 376--384.

\bibitem{Robertson1997}
N.~Robertson, D.~Sanders, P.~Seymour, and R.~Thomas,
\emph{The four-colour theorem},
J. Combin. Theory Ser. B \textbf{70} (1997), 2--44.

\bibitem{thomassen2014group}
C.~Thomassen,
\emph{Group flow, complex flow, unit vector flow, and the $(2+\epsilon)$-flow conjecture},
J. Combin. Theory Ser. B \textbf{108} (2014), 81--91.

\bibitem{Tutte1949}
W.~T.~Tutte,
\emph{On the imbedding of linear graphs in surfaces},
Proc. London Math. Soc. \textbf{51} (1949), 474--483.

\bibitem{Tutte54}
W.~T.~Tutte,
\emph{A contribution to the theory of chromatic polynomials},
Canad. J. Math. \textbf{6} (1954), 80--91.

\bibitem{Ulyanov2026}
N.~Ulyanov,
\emph{Graph Puzzles II.1: Counterexamples to Jain's Second Unit Vector Flows Conjecture},
arXiv preprint arXiv:2603.23328, 2026.

\bibitem{wang2015vector}
Y.~Wang, J.~Cheng, R.~Luo, and C.-Q.~Zhang,
\emph{Vector flows and integer flows},
SIAM J. Discrete Math. \textbf{29} (2015), 2166--2178.

\bibitem{Xu2004}
R.~Xu,
\emph{On Flows of Graphs},
Ph.D. dissertation, West Virginia University, 2004.

\bibitem{zhang1997integer}
C.-Q.~Zhang,
\emph{Integer Flows and Cycle Covers of Graphs},
CRC Press, Boca Raton, 1997.

\end{thebibliography}
\end{document}